\newenvironment{nitemize}
{\begin{list}{$\bullet$}
{\setlength{\parsep}{1ex}
\setlength{\topsep}{1.1ex}
\setlength{\partopsep}{0ex}
\setlength{\labelwidth}{0.5cm}
\setlength{\itemindent}{0cm}
\setlength{\itemsep}{0cm}
\setlength{\leftmargin}{0.7cm}
\setlength{\labelsep}{0.2cm}}}
{\end{list}}
\newenvironment{proof-theo}{\noindent{\it Proof of Theorem \ref{comparison-theorem}. }}{\fbox{}\\}
\newenvironment{proof-theo-2}{\noindent{\it Proof of Theorem \ref{use-Schauder-basis-to-get-V}. }}{\fbox{}\\}
\newenvironment{proof-corollary}{\noindent{\it Proof of Corollary \ref{criterium-dense-vector-space-unique-equilibrium}. }}{\fbox{}\\}
\newtheorem{theorem}{Theorem}
\newtheorem{corollary}{Corollary}
\newtheorem{lemma}{Lemma}
\theoremstyle{remark}
\newtheorem{remark}{Remark}
\theoremstyle{definition}
\theoremstyle{definition}
\newtheorem{example}{Example}
\def\N{{\mathbb N}}
\def\R{{\mathbb R}}
\def\Z{\mathbb Z}
\def\cN{\mathcal{N}}
\def\cV{\mathcal{V}}
\newcommand{\sE}{\mathscr{E}}
\newcommand{\sP}{\mathscr{P}}
\newcommand{\sQ}{\mathscr{Q}}
\newcommand{\ie}{{\it i.e.\/}\ }
\newcommand{\eg}{{\it e.g.\/}\ }
\newcommand{\cf}{{\it cf.\/}\ }
\def\text{\hbox}
\title[]{Criteria for the density of the graph of the entropy map restricted to ergodic states}
\begin{document}

\author{Henri Comman$^\dag$}
\address{Pontificia Universidad Cat\'{o}lica de Valparaiso,  Avenida Brasil 2950, Valparaiso, Chile}
\email{henri.comman@pucv.cl}
\thanks{$\dag$ Partially supported by FONDECYT grant 1120493.}


\subjclass[2000]{Primary: 37D35; Secondary:  37A50, 37D25, 60F10}

\begin{abstract}
We consider a non-uniquely ergodic  dynamical system given by a    $\mathbb{Z}^{l}$-action  (or $(\N\cup\{0\})^l$-action)  $\tau$ on a non-empty compact metrisable space    $\Omega$,  for some $l\in\N$.   Let  (D) denote  the following property:  The graph of the restriction of the  entropy map   $h^\tau$   to the set  of  ergodic states  is dense in the graph of $h^\tau$. We assume that   $h^\tau$ is  finite and upper semi-continuous. We give several criteria in order that  (D) holds,  each of which  is stated  in terms of a basic notion:   Gateaux differentiability of the pressure  map $P^\tau$ on 
  some   sets    dense in the space  $C(\Omega)$ of  real-valued continuous functions on $\Omega$,   level-2 large deviation principle, level-1 large deviation principle, convexity properties of some  maps on $\R^n$  for all $n\in\N$.
The one involving the Gateaux  differentiability of $P^\tau$ is of particular relevance in the context of large deviations since it establishes  a clear comparison  with another well-known sufficient condition: We show that for each  non-empty $\sigma$-compact subset  $\Sigma$ of  $C(\Omega)$,  (D) is equivalent to the existence of an infinite dimensional vector space 
$V$  dense in $C(\Omega)$ 
  such that $f+g$ has a  unique equilibrium state for all   $(f,g)\in \Sigma\times V\setminus\{0\}$; any Schauder basis $(f_n)$ of $C(\Omega)$ whose linear span contains $\Sigma$ admits an arbitrary small perturbation $(h_n)$ so  that one can take   $V=\textnormal{span}(\{f_n+h_n: n\in\N\})$. 
Taking $\Sigma=\{0\}$,    the existence of an infinite dimensional  vector space dense in $C(\Omega)$ constituted by functions admitting a  unique equilibrium state is equivalent to (D) together with   the uniqueness of measure of  maximal entropy. 
  \end{abstract}


\maketitle

\section{Introduction}\label{Introduction}
Let $(\Omega,\tau)$ be a dynamical system  in the sense of  \cite{Ruelle-78} (\ie $\Omega$ is a non-empty compact metrizable space and  $\tau$  an action of   $\mathbb{Z}^{l}$ (resp.  $(\N\cup\{0\})^l$) on   $\Omega$  for some $l\in\N$. Let  $C(\Omega)$, $\mathcal{M}(\Omega)$,
$\mathcal{M}^{\tau}(\Omega)$, ${\sE}^\tau(\Omega)$, 
$h^\tau$, $P^\tau$ denote respectively the set of real-valued continuous functions on $\Omega$ endowed with the uniform topology, Borel  probability measures on $\Omega$ endowed with the weak-$^*$ topology, $\tau$-invariant elements of $\mathcal{M}(\Omega)$, ergodic elements of $\mathcal{M}^{\tau}(\Omega)$,  measure-theoretic entropy and pressure maps.  We assume that   $\mathcal{M}^{\tau}(\Omega)$ is not a singleton  and  $h^\tau$ is finite and  upper semi-continuous.

 In some basic  dynamical systems  as above (\eg full shifts) the set $\mathcal{M}^{\tau}(\Omega)$ fulfils a fundamental  density property: Not only  ${\sE}^\tau(\Omega)$ is dense in $\mathcal{M}^{\tau}(\Omega)$ (\ie  $\mathcal{M}^{\tau}(\Omega)$ is the Poulsen simplex) 
 but the set $\{\mu\in\sE^{\tau}(\Omega): h^\tau(\mu)>r\}$ is dense in the set $\{\mu\in\mathcal{M}^{\tau}(\Omega): h^\tau(\mu)>r\}$ for every real $r$; 
 thanks to the upper semi-continuity of $h^\tau$,  this is equivalent to the density of  the graph of the restricted map  ${h^{\tau}}_{\mid \sE^\tau(\Omega)}$  in the graph of $h^\tau$; the importance of this   property  has long been recognized,  \cf \cite{Israel-Phelps-84-MAGHSCAN-54},  \cite{Israel-86-CMP-106}, \cite{Sokal-(82)-CMP-86}
  (in particular,  it implies the nowhere  Frechet differentiability   of $P^\tau$); following  \cite{Israel-Phelps-84-MAGHSCAN-54}  let us  denote it by    (D). 
 Since  a  measure on $\Omega$ is ergodic if and only if it  is the unique equilibrium state for some element in $C(\Omega)$ (\cite{Phelps_Dynamics and Randomness(2002)Santiago}),   (D)  is also equivalent to  Property 5.1 of \cite{Comman(2009)NON22} which  turns out  to be  sufficient to ensure the large deviation principle for 
any net $(\nu_\alpha)$ of 
  Borel probability measures on  $\mathcal{M}(\Omega)$ and any net $(t_\alpha)$ of positive real numbers converging to zero fulfilling  for some (arbitrary)   $f\in C(\Omega)$,
   \begin{equation}\label{introduction-eq30}
   \forall g\in C(\Omega),\ \ \ \ \ \ \lim_\alpha t_\alpha\log\int_{\mathcal{M}(\Omega)}e^{t_\alpha^{-1}\widehat{g}(\mu)}\nu_\alpha(d\mu)=P^{\tau}(f+g)-P^{\tau}(f), 
   \end{equation}
   where $\widehat{g}(\mu)=\int_{\Omega}g(\xi)\mu(d\xi)$     (\cite{Comman(2009)NON22}, Theorem 5.2). Another well-known sufficient  condition  to get the large deviation principle for    nets $(\nu_\alpha,t_\alpha)$ fulfilling (\ref{introduction-eq30})  is the existence of a vector space $V$  dense in $C(\Omega)$ 
  such that $f+g$ has a  unique equilibrium state for all $g\in V$ (\cite{Kifer-TAMS-90}, \cite{Comman_Rivera-Letelier(2010)ETDS31}); note that  by taking $g=0$ this   implies the uniqueness of equilibrium for  $f$, whereas (D) does not impose any conditions on $f$.

  A basic problem    is to compare  the two above conditions:  Does one  imply the other? Are they equivalent? If not, which extra hypotheses have to be added to get an equivalence? We can also compare them with the large deviation property: Do the large deviation principles imply one (or both) of these conditions? If not, do exist simple  extra hypotheses on  the  rate function in order to get an  equivalence?
 The  same questions araise   for the net  $((\widehat{f}_1,...,\widehat{f}_n)[\nu_\alpha])$  image of  $(\nu_\alpha)$  by the  map $(\widehat{f}_1,...,\widehat{f}_n)$ for any  $((f_1,...,f_n),n)\in C(\Omega)^n\times\N$ (and more generally for any net  of Borel probability measures on $\R^n$       admitting the same limiting log-moment generating function as $((\widehat{f}_1,...,\widehat{f}_n)[\nu_\alpha])$).
       As long as  one is  only concerned by (D),  more than the  nature of the net  satisfying the large deviation principle, the most   relevant object is
         the   rate function; more specifically, some fine  convexity properties  of the rate function play a major role. This leads us to consider convexity properties of some  maps involving only the restriction of  $P^\tau$ to finite dimensional spaces (or equivalently, its dual version with $h^{\tau}$)
             as a new element of comparison.

       In this paper we answer  to the preceding questions, showing that the five  above properties  (\ie  (D), Gateaux differentiability of $P^\tau$,  large deviation principle on $\mathcal{M}(\Omega)$, large deviation principle on $\R^n$ for all $n\in\N$, convexity properties  of some   maps on $\R^n$ for all $n\in\N$) are in fact equivalent once specified   how they take place (Theorem \ref{comparison-theorem}); in particular, we obtain  a plain and simple comparison between the two above mentioned general sufficient conditions to get the large deviation principle for  nets $(\nu_\alpha,t_\alpha)$ as in  (\ref{introduction-eq30}):  (D) is equivalent to  the  Gateaux differentiability of $P^\tau$ on 
       an infinite dimensional  vector space $V$ dense in $C(\Omega)$ possibly excepting zero; furthermore, for each    Schauder basis $(f_n)$ of $C(\Omega)$ and for each  sequence $(\varepsilon_n)$ of positive real numbers converging to zero, there is a sequence $(h_n)$ in $C(\Omega)\setminus\{0\}$ with $\mid\mid h_n\mid\mid\le\varepsilon_n$ so that  one can take   $V=\textnormal{span}(\{f_n+h_n: n\in\N\})$; when such a space $V$ is obtained, a Schauder basis  may be used to get another vector space  linearly independent from $V$ whose direct sum with $V$ fulfils the same property as $V$; iterating this process gives rise to a new criterion for the validity of (D) (Theorem \ref{use-Schauder-basis-to-get-V}). When there is  a unique measure of maximal  entropy, the above conditions can be greatly simplified (Corollary \ref{criterium-dense-vector-space-unique-equilibrium}). As a by-product, the  large deviation results of \cite{Comman_Rivera-Letelier(2010)ETDS31} are both   generalized and strengthened (Example \ref{ex-TCE}).

       In the next section we review some basic notions of thermodynamic formalism,  large deviation theory and convex analysis.    The results are stated in Section \ref{Results}.   The  proofs are given in Section \ref{Proofs}. The proof of the main theorem uses   a particular  case of two results of Israel and Phelps (\cite{Israel-Phelps-84-MAGHSCAN-54}) that we recall in   Appendix A.

\section{Preliminaries}\label{Preliminaries}

\subsection{Thermodynamic formalism}\label{subsection-Thermodynamic formalism}
 Let $(\Omega,\tau)$ be a dynamical system as in \S \ref{Introduction}. Put  $\Lambda(a)=\{(x_1,...,x_l)\in(\N\cup\{0\})^l:x_i<a_i,1\le i\le l\}$ and let ${|\Lambda(a)|}$ denote the cardinality of $\Lambda(a)$ for all $a\in \N^l$. 
For each $\varepsilon>0$ and for each
 $a\in \N^l$ let $\Omega_{\varepsilon,a}$ be
 a maximal
$(\varepsilon,\Lambda(a))$-separated set. 
Order $\N^l$  lexicographically. 
Recall that  $P^\tau(g)$ is defined for each $g\in C(\Omega)$ by
 \begin{equation}\label{subsection-Thermodynamic formalism-eq10}
 P^{\tau}(g)=
\lim_{\varepsilon\rightarrow 0}\limsup_a\frac{1}{|\Lambda(a)|}\log\sum_{\xi\in
\Omega_{\varepsilon,a}}e^{\sum_{x\in\Lambda(a)}g(\tau^x\xi)},
\end{equation}
and fulfills
\begin{equation}\label{subsection-Thermodynamic formalism-eq20}
P^{\tau}(g)=
\lim_{\varepsilon\rightarrow 0}\liminf_a\frac{1}{|\Lambda(a)|}\log\sum_{\xi\in
\Omega_{\varepsilon,a}}e^{\sum_{x\in\Lambda(a)}g(\tau^x\xi)}=\sup_{\mu\in\mathcal{M}^{\tau}(\Omega)}\{\mu(g)+h^\tau(\mu)\}.
\end{equation}
(\cite{Ruelle-78}, \S 6.7, \S 6.12 and Exercise 2 p. 119 for $\Z^l$-action, \S 6.18 for $(\N\cup\{0\})^l$-action). 
Since $\mathcal{M}^{\tau}(\Omega)$ is compact and  $h^\tau$  is finite and upper semi-continuous, the above supremum is a maximum, and  each element  realizing this  maximum is called  an   equilibrium state for $g$. The map  $P^\tau$ is finite convex and continuous on $C(\Omega) $(\cite{Ruelle-78}, \S 6.8 and  \S 6.18).  The right hand side of (\ref{subsection-Thermodynamic formalism-eq10})   may be  called the topological pressure versus the  variational pressure appearing in  the right hand side of the last equality in (\ref{subsection-Thermodynamic formalism-eq20});    the equality between both quantities is known as the variational principle. 
The map $h^\tau$ is affine;   the set $\mathcal{M}^{\tau}(\Omega)$ is a non-empty metrizable Choquet simplex  and  ${\sE}^\tau(\Omega)$ is the set of extreme points of $\mathcal{M}^{\tau}(\Omega)$ (\cite{Ruelle-78}, \S 6.1,   \S 6.5 and \S 6.18).

A sequence $(f_n)$ in $C(\Omega)$ is a Schauder basis of $C(\Omega)$ if for each $f\in C(\Omega)$ there exists a unique sequence $(\lambda_n(f))$ of real  numbers such that $\lim_n\mid\mid f-\sum_{k=1}^n \lambda_k(f) f_k\mid\mid=0$.  It is known that $C(\Omega)$ admits a Schauder basis  (\cite{Semeradi-82}, Theorem 4.4.13 and Notes pp. 8-9); furthermore, each vector space dense in $C(\Omega)$ contains a Schauder basis of $C(\Omega)$ (\cite{Semeradi-82}, Corollary 1.1.9). 
  Each  Schauder basis $(f_n)$ of $C(\Omega)$  fulfils  
\[\forall n\in\N,\ \ \ \ \ \ \sup_{f\in C(\Omega), \mid\mid f\mid\mid\le 1}\mid \lambda_n(f)\mid <+\infty\] (\cite{Semeradi-82}, Proposition 1.1.6). We will use the following result: For each Schauder basis $(f_n)$ of $C(\Omega)$ and for each
 sequence $(h_n)$ in $C(\Omega)$ fulfilling  \[\sum_{n=1}^{+\infty}\left(\sup_{f\in C(\Omega), \mid\mid f\mid\mid\le 1}\mid \lambda_n(f)\mid\right)\mid\mid h_n\mid\mid<1,\] 
 the sequence $(f_n+h_n)$ is a Schauder basis of $C(\Omega)$ (\cite{Semeradi-82}, Theorem 1.1.8).

\subsection{Convex analysis}\label{subsection-Convex analysis}
Let $X$ be  a Hausdorff real topological vector space and   let 
 $X^*$ be the topological dual of $X$ endowed with the weak-$^*$ topology. Let $A$ be a nonempty convex subset of $X$ and let 
 $I$ be a $]-\infty,+\infty]$-valued  function on $A$. 
   The function $I$ is  convex if 
  \[\forall (x,y,\lambda)\in A^2\times[0,1],\ \ \ \ \ \ \ \ I(\lambda x+(1-\lambda)y)\le \lambda I(x)+(1-\lambda)I(y).\]
        The set of all $x\in A$ such that   $I(x)\in\R$ is called    the effective domain of $I$.  The function     $I$ is proper if  the effective domain of $I$  is nonempty;
     in this case, 
for each convex subset $C$ of the effective domain of $I$, $I$ is said to be strictly convex on $C$ if 
\[I(\lambda x+(1-\lambda)y)<\lambda I(x)+(1-\lambda)I(y)\]
                       for all  $(x,y,\lambda)\in C^2\times\ ]0,1[$ with $x\neq y$. 
                                   
            Let us assume furthermore  that $A=X$.
                  The Legendre-Fenchel transform (also called convex conjugate) $I^*$ of $I$ is the function defined on $X^*$ by
\[\forall u\in X^*,\ \ \ \ \ \ \ I^*(u)=\sup_{x\in X}\left\{u(x)-I(x)\right\};\]
note that $I^*$ is a  proper convex function on $X^*$ when $I$ is proper. An element  $u\in X^*$ is a subgradient of $I$ at $x\in X$ if 
 \[\forall y\in X,\ \ \ \ \ \ \ \ I(y)\ge I(x)+u(y-x);\]
  note that when $I$ is proper  the above inequality implies that $x$ belongs to the effective domain of $I$.   An element  $u\in X^*$ is a subgradient of $I$ at $x\in X$ 
 if and only if one of the following conditions holds:
 \begin{itemize}
 \item $I^*(u)+I(x)\le u(x)$;
 \item $I^*(u)+I(x)= u(x)$.
 \end{itemize}
If  $I$ is  lower semi-continuous, then $u\in X^*$ is a subgradient of $I$ at $x\in X$ if and only if $x\in X$ is a subgradient of $I^*$ at $u\in X^*$ (\cite{Ekeland_Teman}, Corollary 5.2).
For each $(x,y)\in X^2$ we put
\[dI(x;y)=\lim_{\varepsilon\rightarrow 0^+} \frac{I(x+\varepsilon y)-I(x)}{\varepsilon};\]
   $dI(x;y)$ is a well-defined element of the extended real line (by convexity) and  called the directional derivative of  $I$ at $x$ in the direction $y$; 
 $I$ is Gateaux differentiable at $x$ if there exists $u\in X^*$ such that 
\[\forall y\in X,\ \ \ \ \ \ \ \ dI(x;y)=u(y);\] 
such an element $u$ is unique and   called the Gateaux differential of $I$ at $x$.
When  furthermore  $X$ is locally convex we have  $I={I^{**}}_{\mid X}$ (\cite{Ekeland_Teman}, Proposition 3.1 and Proposition 4.1) and    the two following results   hold for all $x\in X$:  If  $I(x)\in\R$ and $I$ is continuous at $x$, then $I$ is Gateaux differentiable at $x$  if and only if $I$ has a unique subgradient  at $x$; in this case, this subgradient  is the Gateaux differential of $I$ at $x$ (\cite{Ekeland_Teman}, Proposition 5.3).

The  above  notions    will be applied in  a finite as well as   infinite  dimensional setting; in this latter case we shall consider $X=C(\Omega)$ and $I=P^\tau$.  Some results recalled  in \S \ref{subsection-Thermodynamic formalism} may then   be rephrased in terms of convex analysis: For each  $g\in X$,  an element  $\mu\in  X^*$ is a subgradient of  $P^\tau$ at $g$ if and only if $\mu$ is  an equilibrium state for $g$; in particular, $g$ admits  a unique equilibrium state if and only if $P^\tau$ is Gateaux differentiable at $g$. The variational principle asserts that the entropy map $h^\tau$ is the restriction to $\mathcal{M}^{\tau}(\Omega)$ of the Legendre-Fenchel transform of $P^{\tau}$.

In the finite dimensional setting we will  need  the   notion of essential differentiability. Let $n\in\N$.  We assume that  $X=\R^n$ and $I$ is proper.  Let $\textnormal{dom}\ \delta I$ denote the 
set of points  where  $I$ has a  subgradient;  note that $\textnormal{dom}\ \delta I\neq\emptyset$ when the effective domain is not a singleton  (\cf \cite{Rockafellar-70}, Theorem 23.4). Then, $I$ is said to be essentially strictly convex if $I$ is strictly convex on 
every convex subset  of  $\textnormal{dom}\ \delta I$.  The function $I$ may   be essentially  strictly convex but not strictly convex on its effective domain; on the other hand, $I$ may be strictly convex on the relative interior of its effective domain, but not  essentially strictly convex (\cf \cite{Rockafellar-70}).     It is known that when  $I$ is lower semi-continuous and  $I^*$  has effective domain  $\R^n$, then $I$ is  essentially strictly convex       if and only if $I^*$ is differentiable on $\R^n$ (\cite{Rockafellar-70}, Theorem 26.3).

\subsection{Large deviations}\label{subsection-Large deviations}
Let $(\nu_\alpha,t_\alpha)$ be a net  where  $\nu_\alpha$ is a
Borel probability measure on a Hausdorff regular  topological space 
 $X$,  $t_\alpha>0$  and $(t_\alpha)$   converges to zero. We say that $(\nu_\alpha)$ satisfies a large
deviation principle  with powers $(t_\alpha)$ if there exists a
$[0,+\infty]$-valued lower semi-continuous function $I$  on $X$ such
that
\[\limsup t_\alpha\log\nu_{\alpha}(F)\le-\inf_{x\in F}I(x)\le-\inf_{x\in G}I(x)\le
 \liminf t_\alpha\log\nu_\alpha(G)\]
for all closed sets $F\subset X$ and all open sets $G\subset X$ with $F\subset G$; such a  function $I$ is then unique  and called the rate function.

Assume
furthermore that $X$ is a real topological vector space with
topological dual $X^*$ endowed with the weak-$^*$ topology. The map $\overline{L}$ defined on $X^*$ by
\[\forall\lambda\in X^*,\ \ \ \ \ \ \ \ \overline{L}(\lambda)=\limsup_\alpha t_\alpha\log\int_{X}e^{t_\alpha^{-1}\lambda(x)}\nu_\alpha(dx)\]
is called the  generalized   log-moment generating function (associated with $(\nu_\alpha,t_\alpha)$);  it  is a $]-\infty,+\infty]$-valued proper convex function; when the above upper limit is a limit   it is called the limiting   log-moment generating function at $\lambda$.

 The net   $(\nu_\alpha)$ is said to be
exponentially tight with respect to $(t_\alpha)$
 if for each real $M$ there exists a compact set
 $K_M\subset X$  such that
\[\limsup_\alpha t_\alpha\log\nu_\alpha(X\verb'\'K_M)<M.\]
It is known that if  $(\nu_\alpha)$ is 
exponentially tight with respect to $(t_\alpha)$ and $\overline{L}$ is $\R$-valued, then  $\overline{L}$ is  weak-$^*$ lower semi-continuous;  when furthermore $X$ is locally convex and $(\nu_\alpha)$ satisfies a large
deviation principle with powers $(t_\alpha)$ and   convex rate function $I$, then $I=\overline{L}^*$, where $\overline{L}^*$ is   the   
Legendre-Fenchel transform of   $\overline{L}$ (\cite{Dembo-Zeitouni}, Theorem 4.5.10; \cite{Comman(2007)STAPRO77}, Corollary 2); in this case we have   $I^*=\overline{L}$ by the  lower semi-continuity of  $\overline{L}$. If  $X=\R^n$ for some $n\in\N$ and the  limiting  log-moment generating function exists and is real-valued and differentiable on $\R^n$, then  a G\"{a}rtner's  theorem  asserts that $(\nu_\alpha)$ satisfies a large
deviation principle with powers $(t_\alpha)$ and  rate function $\overline{L}^*$ (\cite{Gartner-TeorVerojatnostPrimenen-77};  \cite{Dembo-Zeitouni}, Theorem 2.3.6).

If  $(\nu_\alpha)$ satisfies a large
deviation principle with powers $(t_\alpha)$ and rate function $I$, then 
for each real-valued continuous function $\lambda$ on $X$ fulfilling  
\begin{equation}\label{subsection-Large deviations-eq40}
\lim_{M\rightarrow+\infty}\limsup_\alpha t_\alpha\log\int_{\{x\in X:\lambda(x)>M\}}e^{t_\alpha^{-1}\lambda(x)}\nu_\alpha(dx)=-\infty
\end{equation}
the limiting   log-moment generating function  at $\lambda$ exists and given by
\begin{equation}\label{subsection-Large deviations-eq60}
\overline{L}(\lambda)=\sup_{x\in X}\{\lambda (x)-I(x)\}.
\end{equation}
The above result is a particular case of the Varadhan's theorem whose   original statement   requires the compactness of the level sets $\{x\in X: \lambda(x)\le r\}$  for all $r\in\R$  (\cite{Dembo-Zeitouni}, Theorem 4.3.1); this hypothesis has been removed in \cite{com-TAMS-03}, Corollary 3.4.
When $I$ is convex  and $\lambda\in X^*$,  the equality (\ref{subsection-Large deviations-eq60}) can be written as   $\overline{L}(\lambda)=I^*(\lambda)$, where $I^*$ denotes the  Legendre-Fenchel transform  of $I$.

 Let $\widetilde{\mathcal{M}}(\Omega)$ denote  the set  of signed Radon measures on $\Omega$ endowed with the weak-$^*$ topology.
  The above notions will be applied with $X=\widetilde{\mathcal{M}}(\Omega)$,  $X=\mathcal{M}(\Omega)$   and 
  $X=\R^n$ for all $n\in\N$. Since  $\widetilde{\mathcal{M}}(\Omega)^*=\{\widehat{g}:g\in C(\Omega)\}$, 
  the equation (\ref{introduction-eq30}) means that  the limiting   log-moment generating function associated with   $(\nu_\alpha,t_\alpha)$ exists and coincides with the map   \[\widetilde{\mathcal{M}}(\Omega)^*\ni\widehat{g}\mapsto P^{\tau}(f+g)-P^{\tau}(f)\] (the net $(\nu_\alpha)$ is  thought of as a net of measures on $\widetilde{\mathcal{M}}(\Omega)$).  
 In all  cases, the exponential tightness holds: This is obvious when $X=\widetilde{\mathcal{M}}(\Omega)$  (resp.  $X=\mathcal{M}(\Omega)$)  since  $(\nu_\alpha)$ is supported by the compact set
  $\mathcal{M}(\Omega)$;  in particular, (\ref{subsection-Large deviations-eq40}) holds for all $\lambda\in X^*$; it is known that in such a situation the large deviation principle in $\widetilde{\mathcal{M}}(\Omega)$ with rate function $I$   is equivalent to the  large deviation principle in $\mathcal{M}(\Omega)$ with  rate function $I_{\mid \mathcal{M}(\Omega)}$; furthermore, $I$ takes the value $+\infty$ on $\widetilde{\mathcal{M}}(\Omega)\setminus\mathcal{M}(\Omega)$ (\cite{Dembo-Zeitouni}, Lemma 4.1.5).  When   $X=\R^n$,  the exponential tightness  follows from  the  finiteness of the limiting  log-moment generating function on $\R^n$, which will always be the case with the nets  we shall consider.  In the above setting, a large deviation principle in $\mathcal{M}(\Omega)$  (resp. $\R^n$) is  commonly referred as  level-2 (resp. level-1).

         The  result of convex analysis recalled  in the last sentence of \S \ref{subsection-Convex analysis} will be applied in particular with $I^*$ the limiting   log-moment generating function associated with
                        $((\widehat{f}_1,...,\widehat{f}_n)[\nu_\alpha]),t_\alpha)$, where $(\nu_\alpha,t_\alpha)$ fulfils (\ref{introduction-eq30}) 
              and $f_1,...,f_n$ are suitable elements of $C(\Omega)$; the function $I$  will be the rate function governing the large deviation principle.

\subsection{Linking  large deviations with  thermodynamic  formalism by convex analysis}\label{subsection-relating-2.1-2.2-2.3}
              Given $f\in C(\Omega)$,            the relation   (\ref{introduction-eq30}) is a crucial equality that   not only  makes the bridge between the large deviation theory  and  thermodynamic  formalism by relating   the limiting log-moment generating function $L_f$ associated with $(\nu_\alpha,t_\alpha)$ to the pressure function $P^\tau$,   but  when   furthermore $(\nu_\alpha)$ satisfies a large
deviation principle in $\mathcal{M}(\Omega)$   with powers $(t_\alpha)$  and convex rate function $I_f$, it  allows to express  most basic ingredients of thermodynamic  formalism in terms of   $I_f$;  in particular,  (D) can be formulated in terms of a well-known  sufficient condition  on a convex rate function to get the large deviation principle (\cf \cite{Comman(2009)NON22}, Theorem 2.1, Property 5.1 and Theorem 5.2). Indeed, in this case,  $I_f={L_f^*}_{\mid \mathcal{M}(\Omega)}$ and the net  $(\nu_\alpha)$ satisfies a large
deviation principle in $\widetilde{\mathcal{M}}(\Omega)$   with powers $(t_\alpha)$  and  rate function $\widetilde{I_f}=L_f^*$ (\cf \S \ref{subsection-Large deviations}); since 
\[\forall \mu\in\widetilde{\mathcal{M}}(\Omega),\ \ \ \ \ \ \ \ \  L_f^*(\mu)=\sup_{\widehat{g}\in{\widetilde{\mathcal{M}}(\Omega)}^*}\{\widehat{g}(\mu)-L_f(\widehat{g})\}=\sup_{g\in C(\Omega)}\{\mu(g)-Q_f(g)\}=Q_f^*(\mu),\]
where $Q_f$ is  the map defined on $C(\Omega$ by 
\[\forall g\in C(\Omega),\ \ \ \ \ \ \ \ \ Q_f(g)=P^{\tau}(f+g)-P^{\tau}(f), \]
we get (\cf Lemma \ref{Fenchel-Legendre-transform-of-Q_f}),
\[
\forall\mu\in\widetilde{\mathcal{M}}(\Omega),\ \ \ \ \ \ \ \  \widetilde{I_f}(\mu)=
\left\{
\begin{array}{ll}
P^{\tau}(f)-h^{\tau}(\mu)-\mu(f) & \textnormal{if $\mu\in\mathcal{M}^{\tau}(\Omega)$}
\\ 
+\infty & \textnormal{if $\mu\in{\widetilde{\mathcal{M}}}(\Omega)\setminus \mathcal{M}^{\tau}(\Omega)$}.
\end{array}
\right.
\]           
                      Therefore, $\mathcal{M}^{\tau}(\Omega)$ is the effective domain of $I_f$; the entropy map $h^\tau$ coincides with ${I_f}_{\mid  \mathcal{M}^{\tau}(\Omega)}$ modulo an affine  function;  given a measure $\mu\in\mathcal{M}(\Omega)$, $\mu$ is an  equilibrium  state  for $f$ if and only if $I_f(\mu)=0$,  $\mu$ is ergodic if and only if $\mu$ is  the unique zero of $I_f$ for some $f\in C(\Omega)$;  for every  net $(\mu_i)$ of ergodic measures,                     
           $\lim_i (\mu_i,h^{\tau}(\mu_i))=(\mu,h^{\tau}(\mu))$  if and only if  $\lim_i (\mu_i,I_f(\mu_i))=(\mu,I_f(\mu))$. 
           It follows from the above correspondences that 
                   (D) is equivalent to   Property 5.1 of \cite{Comman(2009)NON22}, which  is nothing but a particular case of 
                   the 
                     condition appearing in Baldi's theorem in large deviation theory (\cf \cite{Comman(2009)NON22}, Theorem 2.1 and the proof of Theorem 3.3).

                       It is worth noticing that              (\ref{introduction-eq30})            is also necessary in order that     $(\nu_\alpha)$ satisfy the  large deviation principle in $\mathcal{M}(\Omega)$  (resp.  $\widetilde{\mathcal{M}}(\Omega)$) with powers $(t_\alpha)$  and the above   convex rate function $I_f$ (resp. $\widetilde{I_f}$); this is a consequence of Varadhan's theorem; indeed, in this case, since (\ref{subsection-Large deviations-eq40}) holds for all $\lambda\in X^*$  with $X=\widetilde{\mathcal{M}}(\Omega)$, 
                  the limiting log-moment generating function $L_f$ associated with $(\nu_\alpha,t_\alpha)$  exists   as a convex  lower semi-continuous  function  on $X$  and      fulfils  $L_f=\widetilde{I_f}^*$  by  (\ref{subsection-Large deviations-eq60}); since ${\widetilde{I_f}}=Q_f^*$ and $Q_f$ is continuous  we have $Q_f^{**}=Q_f$ hence 
                  $L_f(\widehat{g})=Q_f(g)$ for all $g\in C(\Omega)$, 
                  which is exactly   (\ref{introduction-eq30}).

\section{Results}\label{Results}

         As a natural candidate for  $(\nu_\alpha,t_\alpha)$ as in (\ref{introduction-eq30}),   for each     $f\in C(\Omega)$ we   introduce   a basic net $(\nu^\tau_{f,\alpha},t^\tau_\alpha)$      canonically  associated to the system $(\Omega,\tau)$: Let  $\wp$ denote    the product  set
$]0,+\infty[\times {\N^l}^{]0,+\infty[}$ pointwise directed, where $]0,+\infty[$ (resp. $\N$,  $\N^l$)  is endowed  with the inverse of the  natural order on $\R$ (resp. natural order, lexicographic order), \ie $(\varepsilon,u)\in\wp$ is less than or equal  $(\varepsilon',u')\in\wp$ if $\varepsilon\ge\varepsilon'$ and $u(\delta)$ is lexicographically  less than or equal $u'(\delta)$ for all $\delta\in\ ]0,+\infty[$ (\cf \cite{Kelley-91}). For each  $\alpha=(\varepsilon,u)\in\wp$ we put 
\[t^\tau_\alpha=\frac{1}{\mid\Lambda(u(\varepsilon))\mid}\] 
and
\[\forall f\in C(\Omega),\ \ \ \ \ \ \ \ \nu^\tau_{f,\alpha}=\sum_{\xi\in
\Omega_{\varepsilon,u(\varepsilon)}}\frac{e^{\sum_{x\in\Lambda(u(\varepsilon))}f(\tau^x\xi)}}{\sum_{\xi'\in
\Omega_{\varepsilon,u(\varepsilon)}}e^{\sum_{x\in\Lambda(u(\varepsilon))}f(\tau^x\xi')}}\delta_{\frac{1}{\mid\Lambda(u(\varepsilon))\mid}\sum_{x\in
 \Lambda(u(\varepsilon))}\delta_{\tau^x(\xi)}}.\] 
 The first equality in (\ref{subsection-Thermodynamic formalism-eq20}) implies  that  $(\nu^\tau_{f,\alpha}, t^\tau_\alpha)$ fulfils  (\ref{introduction-eq30}) (Lemma \ref{existence-net-generating the pressure}); this is obvious when $\tau$ is expansive, in which case the above net is in fact a sequence indexed by  elements of $\N^l$.

   Here is the main result.

\begin{theorem}\label{comparison-theorem}
Let $f_0\in C(\Omega)$, let $E$  be a  set generating a $\sigma$-compact    vector space $W$
  dense in $C(\Omega)$ and let $\Sigma$ be a nonempty   subset of  $W$. 

\begin{nitemize}
\item[a)] The following statements are equivalent:
\begin{itemize}
\item[$(i)$] $\{\mu\in\sE^{\tau}(\Omega): h^\tau(\mu)>r\}$ is dense in $\{\mu\in\mathcal{M}^{\tau}(\Omega): h^\tau(\mu)>r\}$ for all $r\in\R$; 
\item[(D)$\equiv (ii)$] The graph of $h^{\tau}_{\ \mid \sE^\tau(\Omega)}$ is dense in the graph of $h^{\tau}$.
\item[$(iii)$] For each   $n\in\N$   there exists    a set $D_n$  dense in  $C(\Omega)^n$ 
   such that   for each   $(g_1,
...,g_n)\in D_n$  and for each 
   $\varepsilon>0$ there 
   is 
 $g\in C(\Omega)\setminus\textnormal{span}\{g_1,...,g_n\}$ with $\mid\mid g\mid\mid<\varepsilon$
     such that  $g+\sum_{k=1}^n t_k g_k$ has a  unique equilibrium state for all $(t_1,...,t_n)\in\R^n$.
\item[$(iv)$] There exists an infinite dimensional     vector space $V$  dense in $C(\Omega)$    such that 
$f+g$ has a  unique equilibrium state for all   $(f,g)\in \Sigma\times V\setminus\{0\}$.
\item[$(v)$]    The net   $(\nu^\tau_{f_0,\alpha})$ satisfies a large deviation principle in $\mathcal{M}(\Omega)$ with powers $(t^{\tau}_\alpha)$ and a convex rate function  $I$ such that   the graph of   ${I}_{\mid \sE^\tau(\Omega)}$  is dense in the graph of  ${I}_{\mid \mathcal{M}^\tau(\Omega)}$.  
  \item[$(vi)$]  There exists  an infinite dimensional      vector space $V$  dense in $C(\Omega)$   such that for each $(f,g)\in \Sigma\times V\setminus\{0\}$ 
  the net   $(\nu^\tau_{f+g,\alpha})$ satisfies a large deviation principle in $\mathcal{M}(\Omega)$ with powers $(t^\tau_\alpha)$ and a  convex rate function  vanishing at a unique point.   
  \item[$(vii)$]  There exists  an infinite dimensional       vector space $V$ dense in  $C(\Omega)$   such that    the net 
    $((\widehat{f_1},...,\widehat{f_n})[\nu^\tau_{f+g,\alpha}])$ 
  satisfies a large deviation principle in $\R^n$ with powers $(t^\tau_\alpha)$ and  an essentially    strictly convex rate function  for all   $(f,g,(f_1,...,f_n), n)\in \Sigma\times V\setminus\{0\}\times E^n\times\N$. 
 \item[$(viii)$]    There exists  an infinite dimensional        vector space $V$ dense in  $C(\Omega)$   such that
  the map 
  \[\R^n\ni(x_1,...,x_n)\mapsto\sup_{(t_1,...,t_n)\in\R^n}\left\{\sum_{k=1}^n t_k x_k-P^\tau\left(f+g+\sum_{k=1}^n t_k f_k\right)\right\}\]
 is essentially strictly convex  for all    $(f,g,(f_1,...,f_n), n)\in \Sigma\times V\setminus\{0\}\times E^n\times\N$.
  \item[$(ix)$]  There exists  an infinite dimensional      vector space $V$ dense in  $C(\Omega)$   such that
  the map 
 \[\R^n\ni x\mapsto\inf\left\{-\mu(f+g)-h^{\tau}(\mu):\mu\in M^{\tau}(\Omega),(\mu(f_1),...,\mu(f_n))=x\right\}\]
  is essentially strictly convex for all  $(f,g,(f_1,...,f_n), n)\in \Sigma\times V\setminus\{0\}\times E^n\times\N$.
   \end{itemize}
   Furthermore: 
   \begin{itemize}
   \item[$1)$] If an infinite dimensional vector space $V$ dense in $C(\Omega)$ fulfils one of the conditions $(iv)$,  $(vi)$, $(vii)$, $(viii)$, $(ix)$, then $V$ fulfils all of them.
\item[$2)$]    The rate function in $(v)$ (resp. $(vi)$)  is 
   \[\mathcal{M}(\Omega)\ni\mu\mapsto 
\left\{
\begin{array}{ll}
P^{\tau}(f_0)-h^{\tau}(\mu)-\mu(f_0) & \textnormal{if $\mu\in\mathcal{M}^{\tau}(\Omega)$}
\\ 
+\infty & \textnormal{if $\mu\in\mathcal{M}(\Omega)\setminus \mathcal{M}^{\tau}(\Omega)$}
\end{array}
\right.
\]
\[\left(resp.\ \ \ \ \ \  \mathcal{M}(\Omega)\ni\mu\mapsto 
\left\{
\begin{array}{ll}
P^{\tau}(f+g)-h^{\tau}(\mu)-\mu(f+g) & \textnormal{if $\mu\in\mathcal{M}^{\tau}(\Omega)$}
\\ 
+\infty & \textnormal{if $\mu\in\mathcal{M}(\Omega)\setminus \mathcal{M}^{\tau}(\Omega)$}
\end{array}
\right.
\right),\]
and the rate function $I_{f+g,(f_1,...,f_n)}$ in $(vii)$ fulfils for each $(x_1,...,x_n)\in\R^n$, 
\begin{multline*}
I_{f+g,(f_1,...,f_n)}(x_1,...,x_n)-P^\tau(f+g)=\sup_{(t_1,...,t_n)\in\R^n}\left\{\sum_{k=1}^n t_k x_k-P^\tau\left(f+g+\sum_{k=1}^n t_k f_k\right)\right\}
\\
=\inf\left\{-\mu(f+g)-h^{\tau}(\mu):\mu\in M^{\tau}(\Omega),(\mu(f_1),...,\mu(f_n))=(x_1,...,x_n)\right\}.
\end{multline*}
\item[$3)$] In $(iii)$ one can take  $D_n=V^n$ with $V$ as in $(iv)$ (resp. $(vi)$, $(vii)$, $(viii)$, $(ix)$).    
\end{itemize}
\item[b)]  Part a)     holds verbatim with any one of the following changes:
      \begin{itemize}
    \item[$1)$] Replacing in $(v)$    the net  $(\nu^\tau_{f_0,\alpha})$ and $(t^\tau_\alpha)$ respectively   by any net  $(\nu_\alpha)$  of Borel probability measures on  $\mathcal{M}(\Omega)$ and any net  $(t_\alpha)$  of positive real numbers  converging to zero fulfilling  for each $h\in C(\Omega)$, 
   \[\lim_\alpha t_\alpha\log\int_{\mathcal{M}(\Omega)}e^{t_\alpha^{-1}\int_\Omega h(\omega)\mu(d\omega)}\nu_\alpha(d\mu)=P^\tau(f_0+h)-P^\tau(f_0);\]
     \item[$2)$] Replacing in $(vi)$ the net  $(\nu^\tau_{f+g,\alpha})$ and $(t^\tau_\alpha)$ respectively   by any net  $(\nu_\alpha)$  of Borel probability measures on  $\mathcal{M}(\Omega)$ and any net  $(t_\alpha)$  of positive real numbers  converging to zero fulfilling  for each $h\in C(\Omega)$, 
   \[\lim_\alpha t_\alpha\log\int_{\mathcal{M}(\Omega)}e^{t_\alpha^{-1}\int_\Omega h(\omega)\mu(d\omega)}\nu_\alpha(d\mu)=P^\tau(f+g+h)-P^\tau(f+g);\]
    \item[$3)$] Replacing in $(vii)$   the net $((\widehat{f_1},...,\widehat{f_n})[\nu^\tau_{f+g,\alpha}])$ 
 and $(t^\tau_\alpha)$ respectively   by any net  $(\mu_\alpha)$  of Borel probability measures on  $\R^n$ and any net  $(t_\alpha)$  of positive real numbers converging to zero fulfilling  for each $(t_1,...,t_n)\in\R^n$, 
   \[\lim_\alpha t_\alpha\log\int_{\R^n}e^{t_\alpha^{-1}\sum_{k=1}^n t_k x_k}\mu_\alpha(d(x_1,...,x_n))=P^\tau(f+g+\sum_{k=1}^n t_k f_k)-P^\tau(f+g).\]
   \item[$4)$] Requiring furthermore that $V\cap W=\{0\}$ in $(iv)$ (resp. $(vi)$, $(vii)$, $(viii)$, $(ix)$). 
   \end{itemize}
    \item[c)] If each element of $\Sigma$ has a unique equilibrium state, then part a) holds verbatim  replacing   $V\setminus\{0\}$ by $V$. 
         If  one of the  conditions $(iv)$,  $(vi)$, $(vii)$, $(viii)$, $(ix)$     holds   replacing   $V\setminus\{0\}$ by $V$, then  all  these  conditions  hold  replacing $V\setminus\{0\}$ by $V$ (in particular, each element of $\Sigma$ has a unique equilibrium state)
     and  
part b) remains true with this change.
    \end{nitemize}
   \end{theorem}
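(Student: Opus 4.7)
The plan is to establish the nine conditions equivalent via a cycle anchored at the equivalence $(ii)\Leftrightarrow(iv)$, which links the graph-density property $(D)$ to the existence of a dense infinite-dimensional subspace $V\subset C(\Omega)$ such that $f+g$ has a unique equilibrium state for every $(f,g)\in\Sigma\times V\setminus\{0\}$. The equivalence $(i)\Leftrightarrow(ii)$ is immediate from the upper semi-continuity of $h^\tau$; $(iv)\Rightarrow(iii)$ follows by taking $D_n=V^n$; $(iii)\Rightarrow(ii)$ follows because each $g$ in the dense set of $(iii)$ picks out an ergodic equilibrium state, and these can be made to approximate any $(\mu,h^\tau(\mu))$ in the graph of $h^\tau$. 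By the dictionary of \S\ref{subsection-relating-2.1-2.2-2.3}, conditions $(v)$--$(ix)$ are large-deviation and convex-analytic translations of $(iv)$, and the rate functions in part~$2)$ are recovered from Lemma~\ref{Fenchel-Legendre-transform-of-Q_f}.

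For the central implication $(ii)\Rightarrow(iv)$ I would combine the Israel--Phelps results of Appendix~A with a perturbation-of-a-Schauder-basis argument. Start from a Schauder basis $(f_n)$ of $C(\Omega)$ whose linear span contains $\Sigma$ (such a basis exists because $\Sigma\subset W$ and $W$ itself contains a Schauder basis by \cite{Semeradi-82}, Corollary~1.1.9). Pick a countable dense subset $\Sigma_0$ of the $\sigma$-compact set $\Sigma$, and inductively choose small perturbations $h_n\in C(\Omega)\setminus\{0\}$ subject to the summability constraint recalled in \S\ref{subsection-Thermodynamic formalism}, so that $(f_n+h_n)$ remains a Schauder basis of $C(\Omega)$. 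At step $n$, Israel--Phelps provides a dense $G_\delta$ of admissible $h_n$ in every norm ball so that, for each $f$ in a finite set $F_n\subset\Sigma_0$ (with $\bigcup_n F_n=\Sigma_0$) and each nonzero rational linear combination $g$ of $f_1+h_1,\ldots,f_n+h_n$, the pressure $P^\tau$ is Gateaux differentiable at $f+g$. A diagonal argument produces a single $(h_n)$ working simultaneously for all finite rational combinations and all $f\in\Sigma_0$. To upgrade uniqueness from rationals to reals I would use that for fixed $f$ the set of $g$ for which $f+g$ has a unique equilibrium is a residual $G_\delta$ (the standard Mazur-type consequence of Gateaux differentiability of convex continuous functions on a separable Banach space), so that density of rational combinations inside $V$ plus an approximation argument extends the property to all of $V\setminus\{0\}$; continuity of $P^\tau$ together with the density $\Sigma_0\subset\Sigma$ then handles the extension to $f\in\Sigma$.

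For the remaining equivalences I would argue as follows. $(iv)\Rightarrow(vi)$ follows from the Kifer / Comman--Rivera-Letelier LDP theorem cited in \S\ref{Introduction} applied to each $f+g$; by \S\ref{subsection-relating-2.1-2.2-2.3} the rate function is ${\widetilde{I_{f+g}}}_{\mid \mathcal{M}(\Omega)}$, and its unique zero is the unique equilibrium state for $f+g$. Applying the same LDP with $f_0$ in place of $f+g$, which is permitted because $V$ provides the required uniqueness family around $f_0$, yields $(v)$; graph-density of the rate function is equivalent to $(D)$. The contraction principle delivers $(vii)$ with the explicit rate function of part~$2)$; by G\"artner's theorem applied through (\ref{introduction-eq30}), the limiting log-moment generating function is
\[(t_1,\ldots,t_n)\mapsto P^\tau\Bigl(f+g+\sum_{k=1}^n t_k f_k\Bigr)-P^\tau(f+g),\]
which is differentiable on $\R^n$ because $P^\tau$ is Gateaux differentiable at each such translate; Theorem~26.3 of \cite{Rockafellar-70} then converts this into essential strict convexity of its Legendre dual, which is precisely $(viii)$, while $(ix)$ is $(viii)$ rewritten via the variational principle. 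The reverse implications $(vi)$--$(ix)\Rightarrow(iv)$ travel the same identifications backwards: a unique zero of the rate function, or a unique maximizer in $(viii)$, forces a unique equilibrium state for $f+g$. Parts~$1)$--$3)$ of~a) fall out of these identifications; part~b) is handled by verifying that only (\ref{introduction-eq30}) and the exponential tightness of \S\ref{subsection-Large deviations} are ever used in the large-deviation arguments, and by arranging $V\cap W=\{0\}$ via a dimension count inside each step of the construction; part~c) amounts to allowing $0\in V$ whenever each $f\in\Sigma$ already has a unique equilibrium state.

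The principal obstacle is the inductive construction in $(ii)\Rightarrow(iv)$. One must simultaneously ensure uniqueness of the equilibrium state for the uncountable two-parameter family $\Sigma\times V\setminus\{0\}$, preserve the summable perturbation bound that keeps $(f_n+h_n)$ a Schauder basis (hence $V$ infinite-dimensional and dense), and, in part~b.4), maintain the transversality $V\cap W=\{0\}$. The $\sigma$-compactness of $W$ is exactly what supplies the countable dense $\Sigma_0$ that drives the bookkeeping, and the Israel--Phelps Baire-category mechanism is what makes each inductive step feasible.
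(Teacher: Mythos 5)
Your overall architecture (anchoring everything at $(ii)\Leftrightarrow(iv)$ via Israel--Phelps plus a Schauder-basis perturbation, then translating $(iv)$ into the large-deviation/convex-analytic statements $(v)$--$(ix)$) matches the paper's. But there are two genuine gaps where the proposal substitutes a plausible-looking heuristic for the actual mechanism that makes the argument work.

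\textbf{Gap in $(ii)\Rightarrow(iv)$: the rationals-to-reals and $\Sigma_0$-to-$\Sigma$ upgrades fail.} You propose to get uniqueness of equilibrium states on a countable family (rational linear combinations of the $f_k+h_k$, and a countable dense $\Sigma_0\subset\Sigma$) and then extend to the uncountable family $\Sigma\times V\setminus\{0\}$ by invoking Mazur-type residuality of the Gateaux-differentiability set and ``continuity of $P^\tau$ plus density.'' Neither step is valid. Mazur's theorem tells you the set of Gateaux points is residual, but a residual set that happens to contain the rational span need not contain the real span---you need the property at \emph{every} nonzero point of $V$, and there is nothing to force the residual set to absorb all of $V$. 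Likewise, uniqueness of equilibrium states is not a closed property, so density of $\Sigma_0$ in $\Sigma$ plus continuity of $P^\tau$ does not transfer uniqueness from $\Sigma_0$ to $\Sigma$. The paper avoids both problems by using the full strength of Theorem 3.2 of Israel--Phelps (Theorem \ref{Israel-Phelps-theorem} in Appendix A): for a fixed $\sigma$-compact $S\subset A(\mathcal{M}^\tau(\Omega))$, the set of $n$-tuples $(a_1,\ldots,a_n)$ such that $P_{-h^\tau}$ is Gateaux differentiable at $b+\sum t_k a_k$ for \emph{all} $(b,t)\in S\times\R^n\setminus\{0\}$ is dense $G_\delta$. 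This quantifier ``for all $(b,t)$'' over the uncountable set is exactly what Lemma \ref{Israel-Phelps-theorem-strenght} uses to build a $\sigma$-compact $\widetilde W$ with $W\cap\widetilde W=\{0\}$ and unique equilibrium for all of $W\times(\widetilde W\setminus\{0\})$ \emph{at once}; the Schauder-basis perturbation step is then only bookkeeping (Lemma \ref{condition on the h_n}) to ensure $V\setminus\{0\}\subset W+(\widetilde W\setminus\{0\})$, with no diagonalization and no need for an extension from a countable subfamily.

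\textbf{Gap in $(iii)\Rightarrow(ii)$: no argument is given.} You say the dense family of perturbations picks out ergodic equilibrium states that ``can be made to approximate any $(\mu,h^\tau(\mu))$ in the graph,'' but you do not say why. The equilibrium state of a nearby perturbation need not be close to an arbitrary prescribed $\mu$ with an arbitrary prescribed entropy level---producing such an approximating sequence is precisely the content of (D), so this reasoning is circular. The paper instead proves the contrapositive $(\neg (i))\Rightarrow(\neg(iii))$ via the Israel--Phelps converse (Theorem 3.3 of \cite{Israel-Phelps-84-MAGHSCAN-54}, stated as Theorem \ref{Israel-Phelps-theorem-converse-affine} and adapted in Lemma \ref{Israel-Phelps-theorem-converse-affine-strenght}): if $(i)$ fails, there is an integer $m$ and a nonempty \emph{open} set of $m$-tuples $(g_1,\ldots,g_m)$ along which any small perturbation $g$ produces a non-Gateaux point of $P^\tau$ on $\mathrm{span}\{g_1,\ldots,g_m\}+g$, contradicting the density demanded by $(iii)$. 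This converse Israel--Phelps result is a genuine ingredient and cannot be replaced by a soft density argument; your proposal effectively skips it.

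Finally, a minor but real issue in the $(iv)\Rightarrow(v)$ direction as you describe it: you say $(v)$ follows ``because $V$ provides the required uniqueness family around $f_0$,'' but $(v)$ holds for an \emph{arbitrary} $f_0\in C(\Omega)$, with no uniqueness assumption on $f_0$ and no relationship to $V$. The paper obtains $(v)$ (and $(v_1)$) from $(ii)$ by directly applying Theorem 5.2(b) of \cite{Comman(2009)NON22}, whose hypothesis is exactly the graph-density property (D) together with Lemma \ref{ergodic-equal-unique-equilibrium}; the form of the rate function is then read off from Lemma \ref{Fenchel-Legendre-transform-of-Q_f}. Routing through $V$ is unnecessary and, as stated, incorrect.
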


      When $W$ contains an element admitting several equilibrium states,   the proof of Theorem \ref{comparison-theorem} reveals that    $V$ is obtained as a proper subspace of a direct sum  $W\oplus\widetilde{W}$, where $\widetilde{W}$ is a $\sigma$-compact infinite dimensional   vector subspace of $C(\Omega)$ such that $f+h$ has a  unique equilibrium state for all   $(f,h)\in W\times (\widetilde{W}\setminus\{0\})$; the space  $\widetilde{W}$ is given by Lemma \ref{Israel-Phelps-theorem-strenght}   and  $V$ is obtained    by means of a  Schauder basis of $C(\Omega)$ included in $W$; 
       the existence of  such a  space $\widetilde{W}$   can in turn  be recovered  from Theorem \ref{comparison-theorem}:      The condition  $(iv)$ implies $V\cap W=\{0\}$ so that 
   $\widetilde{W}$ can be  taken  as any $\sigma$-compact infinite dimensional   vector subspace of  $V$.
    When each element of  $W$ has  a unique  equilibrium state, one considers  the space $W'=W\oplus\textnormal{span}(\{f\})$, where $f\in C(\Omega)$ has  several equilibrium states (such an element exists by Theorem 3.4 of \cite{Israel-Phelps-84-MAGHSCAN-54}, \cf Lemma \ref{existence-of-non-GD-element}) and the preceding case applies with $W'$ in place of $W$. Note that in all cases, 
   the space  $\widetilde{W}$ so obtained  may furthermore chosen as to be  dense in $V$, and thus dense in $C(\Omega)$, which  is   an extra   property  that is   not given by Lemma \ref{Israel-Phelps-theorem-strenght}  neither by  Theorem \ref{Israel-Phelps-theorem} in Appendix A on which 
          Lemma \ref{Israel-Phelps-theorem-strenght} is based.

          The following theorem  specifies  the nature of $V$ and  explains      the use of Schauder bases; 
           it  establishes a method that permits us from any $\sigma$-compact  space $V$ as above to get a new one $V_1$  linearly independent from  $V$ and  such that $V\oplus V_1$ fulfils the same properties as $V$; by iterating this process,  we obtain a
             infinite direct sum $\bigoplus_{n=0}^\infty V_n$ with $V_0=V$, whose      existence        furnishes a new criterion for the validity of (D);  furthermore, starting with $\bigoplus_{n=1}^\infty V_n$ and using only subspaces of this sum, 
            the above  method   allows us  to build
           another direct sum     $\bigoplus_{n=1}^\infty \widetilde{V}_n$ linearly independent from $\bigoplus_{n=1}^\infty V_n$ and fulfilling the same properties as $\bigoplus_{n=1}^\infty V_n$.                         
       
    \begin{theorem}\label{use-Schauder-basis-to-get-V}
    Let $E$, $\Sigma$ and $W$ be as in Theorem \ref{comparison-theorem}.
    Let  $V$ be  an infinite dimensional  vector  space  dense in $C(\Omega)$ fulfilling one of the conditions $(iv)$,  $(vi)$, $(vii)$, $(viii)$, $(ix)$ of Theorem \ref{comparison-theorem} with   $E$, $\Sigma$ and $W$. Let $V_0$ be a   $\sigma$-compact   vector  space dense in $V$.
   \begin{nitemize}
   \item[a)]      There exists  an infinite direct sum $\bigoplus_{n=1}^\infty V_n$, where each $V_n$ is a  
    $\sigma$-compact infinite dimensional vector space dense in $C(\Omega)$ and  linearly independent from $V_0$,   such that for each   nonempty subset $\cN$ of $\N\cup\{0\}$  the direct  sum $\bigoplus_{n\in\cN} V_n$  fulfils all the  conditions $(iv)$,   $(vi)$, $(vii)$, $(viii)$, $(ix)$ of Theorem  \ref{comparison-theorem} applied with $W+\bigoplus_{n\in\N\cup\{0\}\setminus\cN} V_n$ in place of $W$ (putting $\bigoplus_{n\in\emptyset} V_n=\{0\}$ when $\cN=\N\cup\{0\}$).
\item[b)] An infinite direct sum as  in part $a)$ may be  obtained  by recurrence  in the following way: 
  For each $n\in\N\cup\{0\}$,  given  $V_0,...,V_n$,  we consider a $\sigma$-compact  infinite dimensional  vector space $\widetilde{W_n}$  such that 
$f+g$ has a unique equilibrium state for all $(f,g)\in W+\bigoplus_{j=0}^{n} V_j\times(\widetilde{W_n}\setminus\{0\})$ and   we put
\[V_{n+1}=\textnormal{span}(\{f_{n,k} + h_{n,k}:k\in\N\}),
\]
 where $(f_{n,k})$ is a Schauder basis  of $C(\Omega)$ included in $W+\bigoplus_{j=0}^{n} V_j$ and $\{h_{n,k}:k\in\N\}$  a linearly independent subset  of      $\widetilde{W_{n}}$  fulfilling 
\begin{equation}\label{use-Schauder-basis-to-get-V-eq20}
\sum_{k=1}^{+\infty}\left(\sup_{f\in C(\Omega), \mid\mid f\mid\mid\le 1}\mid \lambda_{n,k}(f)\mid\right)\mid\mid h_{n,k}\mid\mid<1,
\end{equation}
where $(\lambda_{n,k}(f))$ denotes the coordinates of $f$ in the basis $(f_{n,k})$. 
 Furthermore, if  $V_0\cap W=\{0\}$  then the above assertion holds verbatim replacing $W+V_0$ by $W$ and taking $\widetilde{W_0}=V_0$; this is the case in particular   when $W$ contains an element admitting several equilibrium states. 

\item[c)] Let  $\bigoplus_{n=1}^\infty V_n$ be  as in part $a)$,  let $(m_n)$ be a strictly increasing sequence in $\N$ and  for each $n\in\N$  let   $\cV_n$ be a $\sigma$-compact infinite dimensional vector subspace of $\bigoplus_{j=m_n}^{j=m_{n+1}} V_j$. Put 
$\widetilde{V}_0=V_0$  and 
\[\forall n\in\N\cup\{0\},\ \ \ \ \ \ \ \ \widetilde{V}_{n+1}=\textnormal{span}(\{f_{n,k} + h_{n,k}:k\in\N\}),
\]
where $(f_{n,k})$ is a Schauder basis  of $C(\Omega)$ included in $W+\bigoplus_{j=0}^{n} \widetilde{V}_j$ and 
 $(h_{n,k})$   a linearly independent subset of $\cV_{n+1}$ fulfilling (\ref{use-Schauder-basis-to-get-V-eq20}). Then, the direct sum $\bigoplus_{n=1}^\infty \widetilde{V}_{n}$  is linearly independent from  $\bigoplus_{n=1}^\infty V_n$ and fulfils  the same  properties as $\bigoplus_{n=1}^\infty V_n$ stated  in part $a)$.  If $V_0\cap W=\{0\}$, then the above assertion holds verbatim replacing  $W+\widetilde{V}_0$ (resp. $\cV_1$) by $W$ (resp. $V_0$).  
  \end{nitemize}
    \end{theorem}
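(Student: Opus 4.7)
The plan is to carry out the recursive construction of part $b)$ first, deduce part $a)$ from it, and then adapt the same method to part $c)$.

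For part $b)$, I proceed by induction on $n$. Given $V_0,\ldots,V_n$ with the claimed properties, the space $U_n := W + \bigoplus_{j=0}^{n} V_j$ is $\sigma$-compact (as a finite sum of $\sigma$-compact vector spaces), infinite-dimensional, and dense in $C(\Omega)$. The hypothesis that $V$ fulfils one of $(iv),(vi),(vii),(viii),(ix)$ of Theorem \ref{comparison-theorem} is equivalent, by part $a)$ of that theorem, to property (D); hence I invoke Lemma \ref{Israel-Phelps-theorem-strenght} to obtain a $\sigma$-compact infinite-dimensional vector space $\widetilde{W_n}$, linearly independent from $U_n$, such that $f+g$ has a unique equilibrium state for every $(f,g)\in U_n\times(\widetilde{W_n}\setminus\{0\})$. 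Since $U_n$ is dense in $C(\Omega)$, it contains a Schauder basis $(f_{n,k})_k$ of $C(\Omega)$ by the recalled Corollary 1.1.9 of \cite{Semeradi-82}. I then choose any linearly independent family $\{\tilde{h}_{n,k}\}$ in $\widetilde{W_n}$ (possible by infinite-dimensionality) and rescale it to $\{h_{n,k}\}$ so that (\ref{use-Schauder-basis-to-get-V-eq20}) holds; the rescaling preserves both linear independence and membership in $\widetilde{W_n}$. By the recalled Theorem 1.1.8 of \cite{Semeradi-82}, $(f_{n,k}+h_{n,k})_k$ is a Schauder basis of $C(\Omega)$, and $V_{n+1} := \textnormal{span}(\{f_{n,k}+h_{n,k}\})$ is $\sigma$-compact, infinite-dimensional, and dense. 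Any relation $\sum_k a_k(f_{n,k}+h_{n,k})\in U_n$ forces $\sum_k a_k h_{n,k}\in U_n\cap\widetilde{W_n}=\{0\}$, whence $a_k=0$ for all $k$, so $V_{n+1}$ is linearly independent from $U_n$, and in particular from $V_0$. The variant $\widetilde{W_0}=V_0$ under $V_0\cap W=\{0\}$ is then obtained by the same argument, using that $V_0\subseteq V$ inherits the uniqueness property against $W$ from the hypothesis on $V$ combined with item $1$ of Theorem \ref{comparison-theorem}(a).

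For part $a)$, fix a non-empty $\cN\subseteq\N\cup\{0\}$ and write $V_{\cN}:=\bigoplus_{n\in\cN}V_n$ and $W_{\cN^c}:=W+\bigoplus_{n\in(\N\cup\{0\})\setminus\cN}V_n$. A countable sum of $\sigma$-compact vector subspaces is $\sigma$-compact, so $W_{\cN^c}$ is $\sigma$-compact, infinite-dimensional, and dense in $C(\Omega)$. By item $1$ of Theorem \ref{comparison-theorem}(a), it suffices to verify condition $(iv)$ for $V_{\cN}$ with $W_{\cN^c}$ in place of $W$: for every $(f,g)\in\Sigma\times(V_{\cN}\setminus\{0\})$, $f+g$ must have a unique equilibrium state. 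Writing $g=\sum_{n\in\cN}g_n$ as a finite sum and letting $m+1$ be the largest index in $\cN$ with $g_{m+1}\neq 0$, I decompose $g_{m+1}=F+H$ through the Schauder expansion defining $V_{m+1}$, with $F\in U_m$ and $H\in\widetilde{W_m}$; then $H\neq 0$, for otherwise $g_{m+1}=F\in U_m\cap V_{m+1}=\{0\}$. Since $f+\sum_{n\in\cN,\,n<m+1}g_n+F\in U_m$, the defining property of $\widetilde{W_m}$ yields the desired uniqueness. All conditions $(iv)$--$(ix)$ then follow from item $1$ of Theorem \ref{comparison-theorem}(a).

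For part $c)$, the construction mirrors part $b)$, substituting $\cV_{n+1}$ for the externally-produced $\widetilde{W_n}$. The required uniqueness property of $\cV_{n+1}$ against $W+\bigoplus_{j=0}^n\widetilde{V_j}$ is derived from part $a)$ applied with the index set $\cN:=\{m_{n+1},\ldots,m_{n+2}\}$: the condition $(iv)$ established there for $\bigoplus_{j\in\cN}V_j$ restricts to the $\sigma$-compact infinite-dimensional subspace $\cV_{n+1}$. The remainder of the construction (Schauder basis, rescaling, invocation of Theorem 1.1.8 of \cite{Semeradi-82}, and the verification of $\sigma$-compactness and density) repeats part $b)$ verbatim. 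The main obstacle lies precisely here: I must ensure, at every inductive step, that $W+\bigoplus_{j=0}^n\widetilde{V_j}$ is contained in the complementary space $W+\bigoplus_{j\notin\cN}V_j$ of the index set $\cN=\{m_{n+1},\ldots,m_{n+2}\}$, so that part $a)$ applies. The strictness of $(m_n)$ together with the observation that each $\widetilde{V_{n+1}}$ is spanned by $f_{n,k}+h_{n,k}$ with $f_{n,k}\in W+\bigoplus_{j=0}^n\widetilde{V_j}$ and $h_{n,k}\in\bigoplus_{j\geq m_{n+1}}V_j$ allow this inclusion to be propagated, and at the same time give the linear independence of $\bigoplus_n\widetilde{V_n}$ from $\bigoplus_n V_n$ by iterating the decomposition argument of part $a)$.
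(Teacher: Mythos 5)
Parts $a)$ and $b)$ of your proposal follow essentially the same route as the paper: Lemma~\ref{Israel-Phelps-theorem-strenght} to produce $\widetilde{W_n}$, a Schauder basis of $C(\Omega)$ inside $W+\bigoplus_{j\le n}V_j$, a rescaled linearly independent family in $\widetilde{W_n}$ satisfying \eqref{use-Schauder-basis-to-get-V-eq20}, Theorem 1.1.8 of \cite{Semeradi-82} to get density, and Lemma~\ref{condition on the h_n}-style decompositions to propagate the uniqueness of equilibrium states along the inductive chain of spaces $V_{n+1}$. Your direct decomposition in part $a)$ (isolating the top index and splitting its $V_{m+1}$-component as $F+H$ with $H\in\widetilde{W_m}\setminus\{0\}$) is a slightly more hands-on variant of the paper's reduction of general $\cN$ to the case $\cN=\N\cup\{0\}$ via the inclusion $W_{\cN^c}+(\bigoplus_{n\in\cN}V_n\setminus\{0\})\subset W+(\bigoplus_n V_n\setminus\{0\})$; you should add the degenerate case where the unique non-zero index in the decomposition of $g$ is $0$, which reduces to the hypothesis on $V_0\subset V$, but this is a small omission.

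Part $c)$, however, has a genuine gap. You derive the required uniqueness property of $\cV_{n+1}$ against $W+\bigoplus_{j=0}^{n}\widetilde{V}_j$ from part $a)$ applied with $\cN=\{m_{n+1},\dots,m_{n+2}\}$, which forces you to establish the inclusion $W+\bigoplus_{j=0}^{n}\widetilde{V}_j\subset W+\bigoplus_{j\notin\cN}V_j$. This inclusion is false: by construction $\widetilde{V}_n$ is spanned by sums $f_{n-1,k}+h_{n-1,k}$ with $h_{n-1,k}\in\cV_n\subset\bigoplus_{j=m_n}^{m_{n+1}}V_j$, so $\widetilde{V}_n$ generically has a non-zero component in $V_{m_{n+1}}$, and $m_{n+1}\in\cN$. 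No amount of ``propagation'' using the strictness of $(m_n)$ removes this boundary overlap. The paper sidesteps the difficulty by invoking part $a)$ only in the case $\cN=\N\cup\{0\}$, which merely requires the trivial inclusion into $W+\bigoplus_{j}V_j$, and then exploits the key (and explicitly proved) inclusion $\sQ(n)\colon W+\sum_{j=0}^{n}\widetilde{V}_j\subset W+\bigoplus_{j=0}^{n}\cV_j$ together with $\cV_k\subset\bigoplus_{j=m_k}^{m_{k+1}}V_j$ to rewrite $f+g$ as $w+(c+g)$ with $w\in W$ and $c+g\in\bigoplus_{j=0}^{n+1}\cV_j\setminus\{0\}\subset\bigoplus_{j}V_j\setminus\{0\}$; the uniqueness then follows from the already-established $\cN=\N\cup\{0\}$ case without ever needing to avoid the index $m_{n+1}$. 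You should replace your choice of $\cN$ and your inclusion argument with this route.
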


 The conclusions of part c) of Theorem \ref{comparison-theorem} with $\Sigma=\{0\}$  concerning the level-1 large deviations and the convex functions related with the associated  rate functions (namely, conditions $(vii)$, $(viii)$ and $(ix)$)    can be substantially improved as shows  the  following corollary.

\begin{corollary}\label{criterium-dense-vector-space-unique-equilibrium}
The following statements are equivalent:
\begin{itemize}
\item[$(i)$] There  exists  a unique measure of maximal entropy and the graph of $h^{\tau}_{\ \mid \sE^\tau(\Omega)}$ is dense in the graph of $h^{\tau}$.
\item[$(ii)$] There exists an infinite dimensional        vector space $V$ dense in $C(\Omega)$ such that each element of $V$ has a  unique equilibrium state;
\item[$(iii)$]  There exists   an infinite dimensional     vector space $V$ dense in  $C(\Omega)$    such that  the map 
 \[\R^n\ni(x_1,...,x_n)\mapsto\sup_{(t_1,...,t_n)\in\R^n}\left\{\sum_{k=1}^n t_k x_k-P^\tau\left(\sum_{k=1}^n t_k f_k\right)\right\}\]
 is essentially strictly convex  for all   $((f_1,...,f_n), n)\in V^n\times\N$.
\item[$(iv)$] There exists  an infinite dimensional     vector space $V$ dense in  $C(\Omega)$ 
  such that    the map 
 \[\R^n\ni x\mapsto\inf\left\{-h^{\tau}(\mu):\mu\in M^{\tau}(\Omega),(\mu(f_1),...,\mu(f_n))=x\right\}\]
  is essentially strictly convex for all   $((f_1,...,f_n), n)\in V^n\times\N$.
  \item[$(v)$]  There exists  an infinite dimensional       vector space $V$ dense in  $C(\Omega)$   such that    the net 
    $((\widehat{f_1},...,\widehat{f_n})[\nu^\tau_{0,\alpha}])$ 
  satisfies a large deviation principle in $\R^n$ with powers $(t^\tau_\alpha)$ and an essentially strictly convex
   rate function for all   $((f_1,...,f_n), n)\in V^n\times\N$.
  \end{itemize}
  Furthermore:
  \begin{itemize}
   \item[$1)$] If an infinite dimensional   vector  space $V$  dense in $C(\Omega)$ fulfils one of the  conditions $(ii)$, $(iii)$, $(iv)$, $(v)$,  then   for each $\sigma$-compact vector space $V_0$  dense in $V$     there exists  an infinite direct sum $\bigoplus_{n=1}^\infty V_n$, where each $V_n$ is a   $\sigma$-compact infinite dimensional vector space dense in $C(\Omega)$  and linearly independent from  $V_0$,  such that $\bigoplus_{n=0}^\infty V_n$ fulfils  all the   conditions  $(ii)$, $(iii)$, $(iv)$,  $(v)$. 
   \item[$2)$] The above equivalences    hold verbatim  replacing  in $(v)$ the net   $((\widehat{f_1},...,\widehat{f_n})[\nu^\tau_{0,\alpha}])$ and $(t^\tau_\alpha)$ respectively   by any net
  $(\mu_\alpha)$  of Borel probability measures on  $\R^n$ and any net  $(t_\alpha)$  of positive real numbers  converging to zero fulfilling for some (arbitrary and independent of $n$) $f\in V$ and for each $(t_1,...,t_n)\in\R^n$, 
     \[\lim_\alpha t_\alpha\log\int_{\R^n}e^{t_\alpha^{-1}\sum_{k=1}^n t_k x_k}\mu_\alpha(d(x_1,...,x_n))=P^\tau\left(f+\sum_{k=1}^n t_k f_k\right)-P^\tau(f).\]
    In this case, the rate function  $I_{f,(f_1,...,f_n)}$ governing the large deviation principle of   the above  net  (and in particular the net $((\widehat{f_1},...,\widehat{f_n})[\nu^\tau_{f,\alpha}])$ of $(v)$ with $f=0$)
     fulfils  for each $(x_1,...,x_n)\in\R^n$, 
\begin{multline*}
I_{f,(f_1,...,f_n)}(x_1,...,x_n)-P^\tau(f)=\sup_{(t_1,...,t_n)\in\R^n}\left\{\sum_{k=1}^n t_k x_k-P^\tau\left(f+\sum_{k=1}^n t_k f_k\right)\right\}
\\
=\inf\left\{-\mu(f)-h^{\tau}(\mu):\mu\in M^{\tau}(\Omega),(\mu(f_1),...,\mu(f_n))=(x_1,...,x_n)\right\}.
\end{multline*}
\end{itemize}
     \end{corollary}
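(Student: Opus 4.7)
The plan is to derive $(i)\Leftrightarrow(ii)$ directly from Theorem \ref{comparison-theorem} part c) applied with $\Sigma=\{0\}$ (since the equilibrium states of the zero function are precisely the measures of maximal entropy), and to obtain the remaining equivalences by applying the convex-analytic characterization recalled at the end of \S\ref{subsection-Convex analysis} --- $I$ essentially strictly convex iff $I^*$ differentiable on $\R^n$ --- to the finite-dimensional convex function $\phi_{f_1,\dots,f_n}(t):=P^\tau(\sum_{k=1}^n t_k f_k)$. The main obstacle will be $(iii)\Rightarrow(ii)$, where the hypothesis only controls finite-dimensional restrictions of $P^\tau$, and one must promote this to uniqueness of the equilibrium state for every single element of $V$.

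For $(i)\Leftrightarrow(ii)$ I take $E=\Sigma=\{0\}$ and $W$ any $\sigma$-compact vector subspace of $C(\Omega)$ dense in $C(\Omega)$ (for instance, the linear span of a Schauder basis). The uniqueness of the measure of maximal entropy in $(i)$ is exactly the hypothesis of Theorem \ref{comparison-theorem} part c) for this $\Sigma$, and condition $(iv)$ of that theorem --- with $V\setminus\{0\}$ replaced by $V$ --- is precisely $(ii)$.

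For $(ii)\Leftrightarrow(iii)$ I observe that the subgradients of $\phi_{f_1,\dots,f_n}$ at $t$ are exactly the tuples $(\mu(f_1),\dots,\mu(f_n))$ with $\mu$ an equilibrium state of $\sum t_k f_k$. Under $(ii)$ each $\sum t_k f_k\in V$ has a unique equilibrium state, so $\phi_{f_1,\dots,f_n}$ is differentiable on $\R^n$, and the cited characterization yields essential strict convexity of $\phi_{f_1,\dots,f_n}^*$, which is the map in $(iii)$. Conversely, if some $g\in V$ had distinct equilibrium states $\mu_1\neq\mu_2$, density of $V$ in $C(\Omega)$ would produce $h\in V$ with $\mu_1(h)\neq\mu_2(h)$, and then $(\mu_i(g),\mu_i(h))$ ($i=1,2$) would be two distinct subgradients of $\phi_{g,h}$ at $(1,0)$; so $\phi_{g,h}$ would fail to be differentiable on $\R^2$, contradicting the essential strict convexity of $\phi_{g,h}^*$ asserted by $(iii)$ applied to $(f_1,f_2)=(g,h)$. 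For $(iii)\Leftrightarrow(iv)$, setting $G(x)=\sup\{h^\tau(\mu):\mu\in M^\tau(\Omega),\,(\mu(f_1),\dots,\mu(f_n))=x\}$, the variational principle gives $\phi_{f_1,\dots,f_n}=(-G)^*$; since $h^\tau$ is affine and upper semi-continuous and $M^\tau(\Omega)$ is compact, $-G$ is proper, convex and lower semi-continuous, so $\phi_{f_1,\dots,f_n}^*=(-G)^{**}=-G$, which is the map in $(iv)$. For $(iii)\Leftrightarrow(v)$, Lemma \ref{existence-net-generating the pressure} identifies the limiting log-moment generating function of $((\widehat{f_1},\dots,\widehat{f_n})[\nu^\tau_{0,\alpha}])$ with $\phi_{f_1,\dots,f_n}-P^\tau(0)$, and G\"{a}rtner's theorem together with the same convex-analytic fact translates its differentiability into the LDP with essentially strictly convex rate function $\phi_{f_1,\dots,f_n}^*-P^\tau(0)$.

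For the \emph{Furthermore} clauses, part 1) is obtained by applying Theorem \ref{use-Schauder-basis-to-get-V} part a) to the given $V$ with $\Sigma=\{0\}$ and $W=V_0$. Part 2) follows by repeating the arguments above with $f\in V$ in place of $0$: part b) of Theorem \ref{comparison-theorem} allows the specific net to be replaced by any other with the same limiting log-moment generating function $t\mapsto P^\tau(f+\sum t_k f_k)-P^\tau(f)$, and the two expressions displayed for $I_{f,(f_1,\dots,f_n)}$ then follow respectively from computing its conjugate directly and from the variational-principle identity already used in $(iii)\Leftrightarrow(iv)$, applied to the auxiliary function $G_f(x)=\sup\{\mu(f)+h^\tau(\mu):\mu\in M^\tau(\Omega),\,(\mu(f_1),\dots,\mu(f_n))=x\}$.
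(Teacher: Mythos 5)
Your proof is correct in substance and reaches the same conclusions, but it takes a partially different route from the paper. Two slips first. In your setup for $(i)\Leftrightarrow(ii)$ you write ``$E=\Sigma=\{0\}$ and $W$ any $\sigma$-compact vector subspace dense in $C(\Omega)$''; this is inconsistent, since by definition $E$ must generate $W$, so $E=\{0\}$ forces $W=\{0\}$. The intended choice is $\Sigma=\{0\}$ together with $E=W$ any $\sigma$-compact dense vector subspace (the paper actually takes $\Sigma=\{0\}$, $E=W=V$). Second, your rate function for the net in $(v)$ should be $\phi_{f_1,\dots,f_n}^*+P^\tau(0)$, not $\phi_{f_1,\dots,f_n}^*-P^\tau(0)$: since $L=\phi-P^\tau(0)$, one has $L^*(x)=\sup_t\{\langle t,x\rangle-\phi(t)+P^\tau(0)\}=\phi^*(x)+P^\tau(0)$. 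Neither slip affects the logic of essential strict convexity.

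As for the route: the paper proves the corollary almost entirely by wholesale reduction to Theorem~\ref{comparison-theorem}. It identifies $(iii)$ of the corollary with condition $(viii)$ of Theorem~\ref{comparison-theorem} taken with $\Sigma=\{0\}$, $V=W=E$ and $V\setminus\{0\}$ replaced by $V$, then invokes the last assertion of part~c) of that theorem to get simultaneously all the equivalences, the ``same-$V$'' stability (property~a)1)), the uniqueness of the measure of maximal entropy, and the rate-function formulas of property~2). You instead only invoke Theorem~\ref{comparison-theorem}~c) for $(i)\Leftrightarrow(ii)$ and then re-derive $(ii)\Leftrightarrow(iii)\Leftrightarrow(iv)\Leftrightarrow(v)$ directly by finite-dimensional convex analysis: you identify subgradients of $\phi_{f_1,\dots,f_n}$ with pushed-forward equilibrium states, use Rockafellar's ``$I$ essentially strictly convex iff $I^*$ differentiable'' to pass back and forth, compute $\phi^*=-G$ directly for $(iii)\Leftrightarrow(iv)$, and apply G\"artner's theorem for $(iii)\Leftrightarrow(v)$. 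This amounts to reproving, in the special case $\Sigma=\{0\}$, the contents of Lemmas~\ref{uniqueness-equil-implies-diffe-logmoment}, \ref{diffe-logmoint-implies-uniqueness-equil} (actually \ref{diffe-logmoment-implies-uniqueness-equil}), \ref{equality-IS-LS*} and the surrounding argument in the proof of Theorem~\ref{comparison-theorem}. Your version is more self-contained and makes the finite-dimensional mechanism explicit; the paper's is shorter and automatically inherits the ``same $V$ works for every condition'' conclusion from property~a)1) of Theorem~\ref{comparison-theorem}, whereas you must verify (as you implicitly do) that each of your direct arguments preserves $V$. For the Furthermore clauses both treatments use Theorem~\ref{use-Schauder-basis-to-get-V}~a) for property~1), and for property~2) your ``repeat with $f\in V$ in place of $0$'' is essentially the same appeal to Theorem~\ref{comparison-theorem} part~b) that the paper makes.
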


\begin{remark}\label{remark-(D)-independence of Sigma}
The statements $(i)$, $(ii)$ and  $(iii)$ of  Theorem  \ref{comparison-theorem}    depend neither  on  $\Sigma$ nor on $E$ nor on $f_0$. Therefore,   Theorem  \ref{comparison-theorem} holds verbatim replacing  $\Sigma$ by any non-empty   subset  of  $W$ in $(iv)$ (resp. $(vi)$, $(vii)$, $(viii)$, $(ix)$); similarly, one can replace $E$ by any set generating $W$ in $(vii)$ (resp. $(viii)$, $(ix)$);  in other words, the sets $\Sigma$ and $E$ may be different in each condition where they appear.    The condition  $(v)$ holds for  some $f_0\in C(\Omega)$ if and only if $(v)$ holds for  all $f_0\in C(\Omega)$. 
\end{remark}

  \begin{remark}\label{usefulness of Schauder basis}
  The space $\widetilde{W_n}$ in part $b)$ of Theorem \ref{use-Schauder-basis-to-get-V}
   may be obtained by  Lemma \ref{Israel-Phelps-theorem-strenght} as well as by applying  Theorem  \ref{comparison-theorem} with  $W+\bigoplus_{j=0}^{n} V_j$ in place of $W$.
   In any case, the proof reveals that     given a linearly independent subset $\{h_k:k\in\N\}$  of $\widetilde{W_n}$, any set  $\{f_k:k\in\N\}\subset W+\bigoplus_{j=0}^{n} V_j$ such that $\textnormal{span}(\{f_k+h_k:k\in\N\})$ is dense in $C(\Omega)$ 
   would do (\cf first assertion of  Lemma \ref{condition on the h_n}). It turns out that  
   the concept of  Schauder basis    captures  more than  the properties needed: indeed,  when $(f_k)$ is a  Schauder basis of $C(\Omega)$,  there is a sequence $(\varepsilon_k)$ of positive real numbers converging to zero  such that 
      the sequence   $(f_k+h_k')$  is  a Schauder basis for every sequence $(h_k')$ satisfying
      \begin{equation}\label{usefulness of Schauder basis-eq20}
\forall k\in\N,\ \ \ \ \ \ \ \ \mid\mid h_k'\mid\mid\le\varepsilon_k
\end{equation} (\cf \S \ref{subsection-Thermodynamic formalism}); therefore, when $\mid\mid h_k\mid\mid\le\varepsilon_k$ for all $k\in\N$,  this  allows us   to  choose  not only      $(h_k')=(h_k)$,   but any linearly independent set 
  $\{h_k':k\in\N\}\subset\widetilde{W_n}$ fulfilling (\ref{usefulness of Schauder basis-eq20}); note that the linear independence of $\{h_k':k\in\N\}$ is necessary since  $\{f_k:k\in\N\}$ is linearly independent
  (\cf second assertion of Lemma \ref{condition on the h_n}). The same observation  holds regarding the proof of the implication $(i)\Rightarrow(iv_4)$ of  Theorem  \ref{comparison-theorem}. 
  \end{remark}

\begin{remark}\label{remark-Corollary-implies-Theorem-V=W}
 The condition $(ii)$  (resp.  $(iii)$, $(iv)$, $(v)$ with the changes $2)$)  of Corollary \ref{criterium-dense-vector-space-unique-equilibrium}   with $V$ $\sigma$-compact is  equivalent  to the  condition $(iv)$
 (resp. $(viii)$, $(ix)$, $(vii)$  with the changes $b)3)$) 
 of Theorem \ref{comparison-theorem} with  $\Sigma=W=V=E$ and replacing  $V\setminus\{0\}$ by $V$. Therefore (by  the properties $a)1)$ and $c)$ of Theorem \ref{comparison-theorem})  given $V$ as in Corollary \ref{criterium-dense-vector-space-unique-equilibrium}, 
 all the conditions   in part $a)$  of Theorem \ref{comparison-theorem}   (as well as those obtained with the changes $b)1)$, $b)2)$, $b)3)$) hold with $W$ any $\sigma$-compact vector space dense in $V$ and  replacing  
 $V\setminus\{0\}$ by $V$ in $(iv)$,  $(vi)$, $(vii)$, $(viii)$, $(ix)$. 
     \end{remark}

It is worth putting  into perspective the achievements of Theorem \ref{comparison-theorem} as regards  their proofs and respective relevance: The equivalences of Theorem \ref{comparison-theorem} involving (D) (except (D)$\Leftrightarrow(i)$,  which is only technical and given by  Lemma \ref{lemma-conv-net})  can be classified in three  categories
  depending on the terms in  which (D) is formulated: 
  \begin{nitemize}
  \item[-]  variational  form of the pressure in an infinite dimensional setting  in  $(iii)$, $(iv)$;
    \item[-]    topological   form of the pressure in $(v)$, $(vi)$, $(vii)$, $b)1)$, $b)2)$, $b)3)$ (\ie involving the large deviations);
    \item[-]  variational  form of the pressure  or entropy in a finite dimensional setting in $(viii)$,  $(ix)$.
  \end{nitemize}
   Once the equivalences of the first above class are obtained, the ones of the second class are  essentially based  on  the variational principle (thanks to Lemma \ref{existence-net-generating the pressure}); deriving  those of  the last  class from the preceding ones 
   is done  by means of   convex analysis,  in finite as well as  infinite dimensional setting.    So, regardless the large deviation aspect,  the main achievement of Theorem \ref{comparison-theorem}  is certainly the equivalences (D)$\Leftrightarrow(iii)\Leftrightarrow(iv)\Leftrightarrow(viii)\Leftrightarrow(ix)$ and especially 
   (D)$\Leftrightarrow(iv)$, the proof of which  relies on a variant of two results  of \cite{Israel-Phelps-84-MAGHSCAN-54} (namely, Theorem \ref{Israel-Phelps-theorem}
 and Theorem \ref{Israel-Phelps-theorem-converse-affine} of  Appendix A)  given by Lemma \ref{Israel-Phelps-theorem-strenght} and Lemma \ref{Israel-Phelps-theorem-converse-affine-strenght}; Schauder basis are   used for the proof of the implication (D)$\Rightarrow(iv)$.

 Form the large deviation point of view,  the fact that (D) implies the level-2 large deviation results $(v)$  with the changes b)1), $(vi)$  with the changes  b)2) and the form of the rate function given in a)2) is  well-known (\cite{Comman(2009)NON22}, Theorem 5.2); the statements $(v)$ and $(vi)$  follow also from Theorem 5.2 of  \cite{Comman(2009)NON22} once it is known that  $(\nu^\tau_{f,\alpha}, t^\tau_\alpha)$ fulfils  (\ref{introduction-eq30}), which is given  by Lemma \ref{existence-net-generating the pressure}. The novelties   are  the converse implications, which  follow easily from (D)$\Leftrightarrow(iv)$
   and  the form of the level-2 rate function (Lemma \ref{Fenchel-Legendre-transform-of-Q_f}).

 The key ingredient that allows to prove the equivalence between  (D)
  and  the level-1 large deviations results $(vii)$ (resp. $(vii)$ with the changes   b)3))  and especially  the essential strict convexity of the rate function   is  the peculiar form of the limiting log-moment generating function (\cf Lemma \ref{uniqueness-equil-implies-diffe-logmoment}
 and Lemma \ref{diffe-logmoment-implies-uniqueness-equil});   since the proof does not depend on the nature of the nets satisfying these  large deviation principles,  once  the existence of  such a net is established (which is given by $(vii)$) one also  obtains      (D)$\Leftrightarrow(viii)$; once  the latter is known, the equivalence     (D)$\Leftrightarrow(ix)$ relies   on  Lemma \ref{convexity-If} and Lemma \ref{equality-IS-LS*}.

\begin{example}\label{ex-TCE} 
Let  $(\Omega,\tau)$ be the  system given by the iteration of a rational map $T$ of degree at least two (\cite{Beardon-91}). More precisely, 
$\Omega$ is the Julia set of $T$ endowed with the induced chordal metric, and  the action   $\tau$ is defined by
\[\N\cup\{0\}\ni n\mapsto\tau(n) = (T_{\mid\Omega})^{n};\]
such a system  has a unique measure of maximal entropy (\cite{Ljubich-83-ETDS-3}).
We assume furthermore that $T$ fulfils a weak  form of hyperbolicity,  the so-called  ÒTopological Collet-EckmanÓ (TCE) condition: There exists $\lambda> 1$ such that every periodic point $p\in\Omega$ with period $n$ satisfies
\[
\mid (T^n)'(p)\mid\ge\lambda^n.
\]
(see Main Theorem of \cite{Przytycki_Rivera-Letelier_Smirnov(2003)InventMath151}
 for other equivalent definitions).   Let $H$ denote the space of H\"{o}lder continuous functions on $\Omega$; 
 since  every  element of $H$ has a 
 unique equilibrium state (Theorem A of   \cite{Comman_Rivera-Letelier(2010)ETDS31}) we obtain the following: 
 
 \begin{nitemize}
\item   
  The conditions $(ii)$, $(iii)$, $(iv)$, $(v)$  of  Corollary \ref{criterium-dense-vector-space-unique-equilibrium}
     hold  with $V=H$.  All the  conditions in part $a)$ (and thus also  those given by the changes $b)1)$, $b)2)$, $b)3)$) of Theorem \ref{comparison-theorem} hold; consequently, the hypotheses  (and thus the properties $a)$, $b)$, $c)$) of Theorem \ref{use-Schauder-basis-to-get-V} are fulfilled. 
Furthermore, when $W$ is a $\sigma$-compact vector space dense in $H$,  the conditions      $(iv)$,  $(vi)$, $(vii)$, $(viii)$, $(ix)$ of Theorem \ref{comparison-theorem} hold with $V=H$ and  replacing   $V\setminus\{0\}$ by $V$  (\cf Remark \ref{remark-Corollary-implies-Theorem-V=W}).

 \item   The foregoing  allows us  to generalize and strengthen  all  the level-2 as well as level-1 large deviation  results of \cite{Comman_Rivera-Letelier(2010)ETDS31};  as regards  the level-2,   the strengthening  consists  in the property of the rate function given by $(v)$ and $b)1)$; with respect to level-1, the improvement is given by    the essential strict convexity of the rate function  and  the fact that this holds   in any finite dimension; in both cases, the generalization is done by considering other nets (for instance,  
 $(\nu^\tau_{f,\alpha}, t^\tau_\alpha)$) as well as allowing  the  parameter function $f$ (as in (\ref{introduction-eq30})) to be an arbitrary element of $C(\Omega)$. More specifically, the connection with  \cite{Comman_Rivera-Letelier(2010)ETDS31}  works as follows:

   \begin{itemize}
   \item[-] The level-2  large deviation results of \cite{Comman_Rivera-Letelier(2010)ETDS31} (namely, Theorem B)  follow from   Theorem A of   \cite{Comman_Rivera-Letelier(2010)ETDS31} together with 
    Theorem 5.2 of  \cite{Comman(2009)NON22}  in place of Theorem C of \cite{Comman_Rivera-Letelier(2010)ETDS31};
  alternatively, in place of  Theorem 5.2 of  \cite{Comman(2009)NON22}  one can use the condition $(vi)$ and $b)2)$ of Theorem \ref{comparison-theorem} with $\Sigma=\{0\}$ (the equality in $b)2)$ for the nets considered in  \cite{Comman_Rivera-Letelier(2010)ETDS31} is proved in Lemmas 4.2, 4.3 and 4.4 of that paper).  Furthermore, the statements $(v)$,  $(vi)$ and $(vii)$ of Theorem \ref{comparison-theorem} furnish  large deviations  results that have not been considered in \cite{Comman_Rivera-Letelier(2010)ETDS31}.   It is easy to derive them using  Theorem A  and Theorem C of \cite{Comman_Rivera-Letelier(2010)ETDS31} and Lemma \ref{existence-net-generating the pressure}
 when $f_0\in H$ and $\Sigma\subset H$; 
  however, the statement $(v)$ when $f_0\not\in H$  and the statements $(vi)$, $(vii)$ when  $\Sigma\not\subset H$ required (D) and thus cannot be obtained directly  from \cite{Comman_Rivera-Letelier(2010)ETDS31}; indeed,  one need to know that Theorem A  of \cite{Comman_Rivera-Letelier(2010)ETDS31} implies (D) (in other words, $(ii)\Rightarrow(i)$ of  Corollary \ref{criterium-dense-vector-space-unique-equilibrium}).
      
  \item[-] The level-1  large deviation results of \cite{Comman_Rivera-Letelier(2010)ETDS31} (namely, Corollary 1.1 of that paper) are strengthened by the statements  $(vii)$ together with $b)3)$ of Theorem \ref{comparison-theorem}  (alternatively, by $(v)$ and the assertion  $2)$ of Corollary \ref{criterium-dense-vector-space-unique-equilibrium}) by  giving a $n$-dimensional version for all $n\in\N$  and 
 establishing the essential strict convexity of the rate function; in particular, the last assertion of Corollary 1.1 of \cite{Comman_Rivera-Letelier(2010)ETDS31} is a direct consequence of this fact.  The same improvements apply to Theorem 3.5  of \cite{Comman(2009)NON22}  concerning the level-1  large deviation result for hyperbolic rational maps (\ie expanding on $\Omega$): indeed, the  hyperbolicity implies that both the parameter function (\ie $-t\log\mid T'\mid$) as well as the function by which the net is pushed forward   (\ie $\log\mid T'\mid$) belong to $H$. 
 \end{itemize}
    \end{nitemize}
\end{example}

\begin{remark}\label{remark-recovering-Theorem C-of-Comman_Rivera-Letelier(2010)ETDS31}
The implication  $(iv)\Rightarrow(v)$  with the   changes  $b)1)$ of  Theorem \ref{comparison-theorem} strengthens the large deviation results of Theorem C of \cite{Comman_Rivera-Letelier(2010)ETDS31} (as well as  their   generalization  given by  Remark  B.2)   by weakening the general hypothesis, \ie allowing $f_0$ to have several  equilibrium states, and keeping the conclusions (namely, the first and the last assertions)  unchanged. 
\end{remark}

\section{Proofs}\label{Proofs}

\begin{lemma}\label{lemma-conv-net}
Let $J$ and $L$ be  directed sets,  let $s$ be a real-valued function on $J\times L$, let  $\wp$ denote the set $J\times L^{J}$ pointwise directed,  let 
  $(s_i)_{i\in\wp}$ be the  net in $\R$ defined by putting $s_i=s(j,u(j))$ for all  $i=(j,u)\in\wp$. For each  $r\in\R$ we have
\[\limsup_i\ s_i\le
r\Longleftrightarrow\limsup_j\limsup_l\
s({j,l})\le r;\]
in particular,
\[\lim_i\ s_i=r\Longleftrightarrow\liminf_j\liminf_l\
s({j,l})=\limsup_j\limsup_l\
s({j,l})= r.\]
\end{lemma}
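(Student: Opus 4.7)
The plan is to prove the $\limsup$ equivalence directly from the definitions, and then derive the ``in particular'' clause by applying that equivalence to $-s$ together with the standard identity $\limsup(-a)=-\liminf(a)$.

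For the forward direction, fix $\varepsilon>0$. By the hypothesis $\limsup_i s_i\le r$, there exists $i_0=(j_0,u_0)\in\wp$ such that $s(j,u(j))\le r+\varepsilon$ for every $i=(j,u)\in\wp$ with $i\ge i_0$. Given an arbitrary $j\ge j_0$ and an arbitrary $l\ge u_0(j)$, I would produce $u\in L^J$ with $u\ge u_0$ pointwise and $u(j)=l$: simply set $u(j)=l$ and $u(j')=u_0(j')$ for $j'\neq j$. Then $(j,u)\ge i_0$, so $s(j,l)=s(j,u(j))\le r+\varepsilon$. Taking supremum over $l\ge u_0(j)$ yields $\sup_{l\ge u_0(j)} s(j,l)\le r+\varepsilon$, hence $\limsup_l s(j,l)\le r+\varepsilon$ for every $j\ge j_0$; consequently $\limsup_j\limsup_l s(j,l)\le r+\varepsilon$, and letting $\varepsilon\to 0$ closes this direction.

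For the converse, fix $\varepsilon>0$ and use $\limsup_j\limsup_l s(j,l)\le r$ to pick $j_0\in J$ with $\limsup_l s(j,l)\le r+\varepsilon/2$ for every $j\ge j_0$. For each such $j$ choose $l_j\in L$ so that $s(j,l)\le r+\varepsilon$ whenever $l\ge l_j$; for indices $j$ that do not dominate $j_0$ set $l_j$ to be any element of $L$. Define $u_0\in L^J$ by $u_0(j)=l_j$ and let $i_0=(j_0,u_0)$. For every $i=(j,u)\in\wp$ with $i\ge i_0$ one has $j\ge j_0$ and $u(j)\ge u_0(j)=l_j$, whence $s_i=s(j,u(j))\le r+\varepsilon$; thus $\limsup_i s_i\le r$.

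The ``in particular'' statement then follows by applying the proved equivalence to the function $-s$: since $\limsup(-a)=-\liminf(a)$, it reads $\liminf_i s_i\ge r \Longleftrightarrow \liminf_j\liminf_l s(j,l)\ge r$. Combining this with the $\limsup$ equivalence and the trivial inequalities $\liminf\le\limsup$ at every stage, the condition $\lim_i s_i=r$ is equivalent to the simultaneous bounds $\limsup_j\limsup_l s(j,l)\le r$ and $\liminf_j\liminf_l s(j,l)\ge r$, which forces both iterated limits to equal $r$. The only delicate point in the argument is the construction in the forward direction: one must manipulate $u$ at a single coordinate without violating $u\ge u_0$ pointwise, which is handled by leaving $u$ equal to $u_0$ on all other coordinates.
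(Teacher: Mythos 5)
Your proof is correct and follows essentially the same route as the paper's: the key step in both directions is the same construction of modifying $u_0$ at a single coordinate to produce an element of $\wp$ dominating $(j_0,u_0)$ that probes a specific value $s(j,l)$, and the ``in particular'' clause is obtained in both cases by applying the $\limsup$ equivalence to $-s$ and $-r$. The only cosmetic difference is that the paper argues the forward direction by contradiction whereas you argue it directly; the underlying idea is identical.
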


\begin{proof}
Let $\delta>0$. First assume that $\limsup_i\ s_i\le r$. There
exists $j_0\in J$ and $u_0\in L^J$
such that $s({j,u(j)})<r+\delta$  for all
$(j,u)$ greater than or equal  $(j_0,u_0)$. Suppose that
$\limsup_j\limsup_l\ s({j,l})>r+\delta$. There exists   $(j_1,l_1)$ in $J\times L$ with $j_1$ (resp. $l_1$)   greater than or equal $j_0$ (resp. $u_0(j_1)$)   such that     $s({j_1, l_1})>r+\delta$. Putting
  $u_1(j_1)=l_1$ and $u_1(j)=u_0(j)$   for all $j\in J\setminus\{l_1\}$,  we get  an element $(j_1,u_1)\in\wp$ greater than or equal $(j_0,u_0)$ fulfilling
$s({j_1,u_1(j_1)})>r+\delta$, which gives  the
contradiction. Therefore, we have  $\limsup_j\limsup_l\
s({j,l})\le r+\delta$ hence $\limsup_j\limsup_l\
s({j,l})\le r$  since $\delta$ is
arbitrary. 

Assume now that  $\limsup_j\limsup_l\
s({j,l})\le r$.  There exists $j_0\in J$  and for each $j\in J$ greater than or equal $j_0$ there exists 
  $u_0(j)\in L$ such that  
$s({j,l})<r+\delta$ for all $j$ and $l$  greater than or equal $j_0$ and $u_0(j)$, respectively.
Putting  $u_0(j)=u_0(j_0)$  for all  $j$ lesser than  $j_0$, we get an element  $(j_0,u_0)\in\wp$ such that 
$s({j,u(j)})<r+\delta$ for all $(j,u)\in\wp$ greater than or equal $(j_0,u_0)$; therefore,  $\limsup_i s_i\le r+\delta$ hence 
$\limsup_i s_i\le r$  since $\delta$ is
arbitrary. The first assertion is proved; the second assertion is a direct consequence since 
 $\liminf_i\ s_i\ge r$ if and only if $-\limsup_i\ -s_i\ge r$  if and only if $\limsup_i\ -s_i\le -r$ if and only if $\limsup_j\limsup_l\
-s({j,l})\le -r$ if and only if $-\liminf_j\liminf_l\
s({j,l})\le -r$ if and only if $\liminf_j\liminf_l\
s({j,l})\ge r$ (where the third equivalence follows from the  first assertion applied to the net $(-s_i)$ and the real  $-r$).
\end{proof}

\begin{lemma}\label{existence-net-generating the pressure}
For each $(f,g)\in C(\Omega)^2$ we have 
\[\lim_\alpha t_\alpha\log\int_{\mathcal{M}(\Omega)}e^{({t_\alpha^{\tau}})^{-1}\int_\Omega g(\omega)\mu(d\omega)}\nu^{\tau}_{f,\alpha}(d\mu)=P^\tau(f+g)-P^{\tau}(f).\]
\end{lemma}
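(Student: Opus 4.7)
The plan is to carry out the computation in two steps: first, unfold the definition of $\nu^\tau_{f,\alpha}$ so that the LHS becomes a difference of two logarithms of partition sums; second, show that each such partition sum, when indexed by $\alpha\in\wp$, converges to the corresponding pressure via the pressure formulas (\ref{subsection-Thermodynamic formalism-eq10}) and (\ref{subsection-Thermodynamic formalism-eq20}) together with Lemma \ref{lemma-conv-net}.

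For the first step, fix $\alpha=(\varepsilon,u)\in\wp$ and let $a=u(\varepsilon)$. For $\xi\in\Omega_{\varepsilon,a}$ the atom $\mu_\xi=\frac{1}{|\Lambda(a)|}\sum_{x\in\Lambda(a)}\delta_{\tau^x\xi}$ satisfies $\int_\Omega g\,d\mu_\xi=\frac{1}{|\Lambda(a)|}\sum_{x\in\Lambda(a)}g(\tau^x\xi)$, hence $(t^\tau_\alpha)^{-1}\int_\Omega g\,d\mu_\xi=\sum_{x\in\Lambda(a)}g(\tau^x\xi)$. Substituting into the definition of $\nu^\tau_{f,\alpha}$ and simplifying the exponentials gives
\[
\int_{\mathcal{M}(\Omega)}e^{(t^\tau_\alpha)^{-1}\int_\Omega g\,d\mu}\,\nu^\tau_{f,\alpha}(d\mu)=\frac{\sum_{\xi\in\Omega_{\varepsilon,a}}e^{\sum_{x\in\Lambda(a)}(f+g)(\tau^x\xi)}}{\sum_{\xi\in\Omega_{\varepsilon,a}}e^{\sum_{x\in\Lambda(a)}f(\tau^x\xi)}}.
\]
Taking $t^\tau_\alpha\log$ and setting
\[
S(h;\varepsilon,a)=\frac{1}{|\Lambda(a)|}\log\sum_{\xi\in\Omega_{\varepsilon,a}}e^{\sum_{x\in\Lambda(a)}h(\tau^x\xi)},
\]
the quantity to analyse becomes $S(f+g;\varepsilon,u(\varepsilon))-S(f;\varepsilon,u(\varepsilon))$.

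For the second step, apply Lemma \ref{lemma-conv-net} with $J=\,]0,+\infty[$ (reverse order), $L=\N^l$ (lexicographic order), and $s(\varepsilon,a)=S(h;\varepsilon,a)$ for a fixed $h\in C(\Omega)$. The pressure formulas (\ref{subsection-Thermodynamic formalism-eq10}) and (\ref{subsection-Thermodynamic formalism-eq20}) assert that $\lim_{\varepsilon\to 0}\limsup_a S(h;\varepsilon,a)$ and $\lim_{\varepsilon\to 0}\liminf_a S(h;\varepsilon,a)$ both exist and equal $P^\tau(h)$; since ordinary limits coincide with both $\limsup$ and $\liminf$, this translates in the reverse order on $J$ into
\[
\limsup_\varepsilon\limsup_a S(h;\varepsilon,a)=\liminf_\varepsilon\liminf_a S(h;\varepsilon,a)=P^\tau(h).
\]
By the second assertion of Lemma \ref{lemma-conv-net}, the net $\bigl(S(h;\varepsilon,u(\varepsilon))\bigr)_{(\varepsilon,u)\in\wp}$ therefore converges to $P^\tau(h)$. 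Applying this to $h=f+g$ and $h=f$ separately and subtracting yields the desired identity.

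The only subtlety is making sure the reverse-order convention on $]0,+\infty[$ lines up correctly with the $\lim_{\varepsilon\to 0^+}$ appearing in the pressure formulas, so that the hypotheses of Lemma \ref{lemma-conv-net} really apply in the form stated; this is purely bookkeeping once one observes that a convergent net in $\R$ has coinciding $\limsup$ and $\liminf$. No additional compactness or continuity argument is needed, since the integral collapses to a finite sum over the separated set $\Omega_{\varepsilon,a}$ and the two directed limits decouple cleanly.
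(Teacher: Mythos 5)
Your proof is correct and reaches the same conclusion via a slightly different, and in fact cleaner, decomposition of the argument. The paper works directly on the difference $S(f+g;\varepsilon,a)-S(f;\varepsilon,a)$: it derives equation~(\ref{existence-net-generating the pressure-eq40}) by sandwiching the iterated $\liminf$ and $\limsup$ of that difference between two copies of $P^\tau(f+g)-P^\tau(f)$ (using sub/super-additivity of $\limsup$/$\liminf$ together with both pressure formulas), and then applies Lemma~\ref{lemma-conv-net} once to the net of differences. You instead observe that formulas~(\ref{subsection-Thermodynamic formalism-eq10}) and~(\ref{subsection-Thermodynamic formalism-eq20}) already force $\limsup_\varepsilon\limsup_a S(h;\varepsilon,a)=\liminf_\varepsilon\liminf_a S(h;\varepsilon,a)=P^\tau(h)$ for each fixed $h\in C(\Omega)$, apply Lemma~\ref{lemma-conv-net} separately with $h=f+g$ and $h=f$ to conclude that each normalized log-partition-sum net $\bigl(S(h;\varepsilon,u(\varepsilon))\bigr)_{(\varepsilon,u)\in\wp}$ converges to $P^\tau(h)$, and then subtract. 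This bypasses the sandwich chain entirely; it uses the finiteness of $P^\tau$ to subtract the two net limits, but that is a standing hypothesis of the paper. Your route proves the slightly stronger fact that each component net converges individually (not just the difference), from which the lemma follows by elementary net arithmetic, whereas the paper's argument, being restricted to the difference, would remain applicable even in a hypothetical setting where only the combined quantity admitted matching iterated limits.
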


\begin{proof}
Let $(f,g)\in C(\Omega)^2$.
For each $(a,h,\xi)\in \N^l\times C(\Omega)\times\Omega$ we 
 put 
\[S^{\tau}_{\Lambda(a)}(h)(\xi)=\sum_{x\in\Lambda(a)}h(\tau^x\xi).\] 
We have
\begin{multline*}
P^{\tau}(f+g)-P^{\tau}(f)=
\\
\lim_{\varepsilon\rightarrow 0}\liminf_a\frac{1}{|\Lambda(a)|}\log\sum_{\xi\in
\Omega_{\varepsilon,a}}e^{S^{\tau}_{\Lambda(a)}(f+g)(\xi)}
- \lim_{\varepsilon\rightarrow 0}\limsup_a\frac{1}{|\Lambda(a)|}\log\sum_{\xi\in
\Omega_{\varepsilon,a}}e^{S^{\tau}_{\Lambda(a)}(f)(\xi)}
\\
\le\lim_{\varepsilon\rightarrow 0}\liminf_a\frac{1}{|\Lambda(a)|}\log\left(\frac{\sum_{\xi\in
\Omega_{\varepsilon,a}}e^{S^{\tau}_{\Lambda(a)}(f+g)(\xi)}}{\sum_{\xi\in
\Omega_{\varepsilon,a}}e^{S^{\tau}_{\Lambda(a)}(f)(\xi)}}\right)
\\
\le
\lim_{\varepsilon\rightarrow 0}\limsup_a\frac{1}{|\Lambda(a)|}\log\left(\frac{\sum_{\xi\in
\Omega_{\varepsilon,a}}e^{S^{\tau}_{\Lambda(a)}(f+g)(\xi)}}{\sum_{\xi\in
\Omega_{\varepsilon,a}}e^{S^{\tau}_{\Lambda(a)}(f)(\xi)}}\right)
\\
\le\lim_{\varepsilon\rightarrow 0}\limsup_a\frac{1}{|\Lambda(a)|}\log\sum_{\xi\in
\Omega_{\varepsilon,a}}e^{S^{\tau}_{\Lambda(a)}(f+g)(\xi)}
\\
-\lim_{\varepsilon\rightarrow 0}
\liminf_a\frac{1}{|\Lambda(a)|}\log\sum_{\xi\in
\Omega_{\varepsilon,a}}e^{S^{\tau}_{\Lambda(a)}(f)(\xi)}=P^{\tau}(f+g)-P^{\tau}(f),
\end{multline*}
hence
\begin{multline}\label{existence-net-generating the pressure-eq40}
\lim_{\varepsilon\rightarrow 0}\liminf_a\frac{1}{|\Lambda(a)|}\log\left(\frac{\sum_{\xi\in
\Omega_{\varepsilon,a}}e^{S^{\tau}_{\Lambda(a)}(f+g)(\xi)}}{\sum_{\xi\in
\Omega_{\varepsilon,a}}e^{S^{\tau}_{\Lambda(a)}(f)(\xi)}}\right)
\\
=\lim_{\varepsilon\rightarrow 0}\limsup_a\frac{1}{|\Lambda(a)|}\log\left(\frac{\sum_{\xi\in
\Omega_{\varepsilon,a}}e^{S^{\tau}_{\Lambda(a)}(f+g)(\xi)}}{\sum_{\xi\in
\Omega_{\varepsilon,a}}e^{S^{\tau}_{\Lambda(a)}(f)(\xi)}}\right).
\end{multline}
 Since for each
 $\alpha=(\varepsilon,u)\in\wp$,
\[t^\tau_\alpha\log\int_{\mathcal{M}(\Omega)}
e^{(t^\tau_\alpha)^{-1}\int_\Omega g(\omega)\mu(d\omega)}\nu^\tau_{f,\alpha}(d\mu)=\frac{1}{|\Lambda (u(\varepsilon))|}\log\left(\frac{\sum_{\xi\in
\Omega_{\varepsilon,u(\varepsilon)}}e^{S^{\tau}_{u(\varepsilon)}(f+g)(\xi)}}{\sum_{\xi\in
\Omega_{\varepsilon,u(\varepsilon)}}e^{S^{\tau}_{u(\varepsilon)}(f)(\xi)}}\right),\]
the conclusion follows from (\ref{existence-net-generating the pressure-eq40}) together with 
  Lemma \ref{lemma-conv-net} applied with $J=\ ]0,+\infty[$ and  $L=\N^l$ with their respective orders (defined before Theorem \ref{comparison-theorem}) and $s$ defined by 
  \[\forall (\varepsilon,a)\in J\times L,\ \ \ \ \ \ \ \ s(\varepsilon,a)=\frac{1}{|\Lambda (a)|}\log\left(\frac{\sum_{\xi\in
\Omega_{\varepsilon,a}}e^{S^{\tau}_{a}(f+g)(\xi)}}{\sum_{\xi\in
\Omega_{\varepsilon,u(\varepsilon)}}e^{S^{\tau}_{a}(f)(\xi)}}\right).\]
     \end{proof}

\begin{lemma}\textnormal{\textbf{(Phelps)}}\label{ergodic-equal-unique-equilibrium}
For each $\mu\in\mathcal{M}^{\tau}(\Omega)$, 
$\mu\in\sE^\tau(\Omega)$ if and only if $\mu$ is the unique equilibrium state for some element in $C(\Omega)$.
\end{lemma}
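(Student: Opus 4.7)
The plan is to prove the two directions separately: the ``only if'' direction is a Choquet‑theoretic statement that  reduces to a result of Phelps on metrizable Choquet simplexes (the one cited from the reference \cite{Phelps_Dynamics and Randomness(2002)Santiago}); the converse is an almost immediate consequence of the affineness of $h^\tau$ together with the variational principle.

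For the converse direction, suppose $\mu\in\mathcal{M}^{\tau}(\Omega)$ is the unique equilibrium state for some $f\in C(\Omega)$. If $\mu$ were not an extreme point of $\mathcal{M}^{\tau}(\Omega)$, we could write $\mu=\tfrac12\mu_1+\tfrac12\mu_2$ with $\mu_1,\mu_2\in\mathcal{M}^{\tau}(\Omega)$ and $\mu_1\neq \mu_2$. Since $h^\tau$ is affine and $\nu\mapsto \nu(f)$ is linear, the affine map $\nu\mapsto h^\tau(\nu)+\nu(f)$ satisfies
\[
P^\tau(f)=h^\tau(\mu)+\mu(f)=\tfrac12\bigl(h^\tau(\mu_1)+\mu_1(f)\bigr)+\tfrac12\bigl(h^\tau(\mu_2)+\mu_2(f)\bigr),
\]
and since each of the two bracketed terms is bounded above by $P^\tau(f)$, both must equal $P^\tau(f)$. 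Hence $\mu_1$ and $\mu_2$ are equilibrium states for $f$, contradicting uniqueness. Thus $\mu\in\sE^\tau(\Omega)$.

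For the direct direction, fix $\mu\in\sE^\tau(\Omega)$. The set $\mathcal{M}^{\tau}(\Omega)$ is a non-empty metrizable Choquet simplex whose set of extreme points is $\sE^\tau(\Omega)$, and $h^\tau$ is a finite affine upper semi-continuous function on it (\S \ref{subsection-Thermodynamic formalism}). The result of Phelps recalled in \cite{Phelps_Dynamics and Randomness(2002)Santiago} then asserts that for every extreme point $\mu$ of such a simplex and every finite affine usc function $h$, there is a continuous affine function $\ell$ on $\mathcal{M}^\tau(\Omega)$ such that $h+\ell$ attains its maximum on $\mathcal{M}^\tau(\Omega)$ only at $\mu$. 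Applying this with $h=h^\tau$ yields such an $\ell$. By Hahn--Banach, $\ell$ extends to a weak-$^*$ continuous affine function on $\widetilde{\mathcal{M}}(\Omega)$, and by the Riesz representation theorem every such function has the form $\nu\mapsto \nu(f)+c$ for some $f\in C(\Omega)$ and $c\in\R$. Consequently the map $\nu\mapsto h^\tau(\nu)+\nu(f)$ attains its maximum on $\mathcal{M}^\tau(\Omega)$ only at $\mu$; by the variational principle (\ref{subsection-Thermodynamic formalism-eq20}) this means exactly that $\mu$ is the unique equilibrium state for $f$.

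The reverse direction is routine; the only genuinely delicate step is the invocation of Phelps' theorem on the existence of affine continuous functions with strict maximum at a prescribed extreme point of a metrizable Choquet simplex (in the presence of a perturbing affine usc term). Since this is the content of the reference \cite{Phelps_Dynamics and Randomness(2002)Santiago} from which the lemma takes its name, it is used as a black box and the remainder of the argument is the standard passage from affine functions on $\mathcal{M}^\tau(\Omega)$ to elements of $C(\Omega)$ via Hahn--Banach/Riesz.
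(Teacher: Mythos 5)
Your proof is correct and follows essentially the same route as the paper's: the forward direction is attributed to Phelps' Theorem 1 (treated as a black box) and the converse is derived from the affineness of $h^\tau$, exactly as the paper does. The paper simply cites Phelps as directly yielding the $C(\Omega)$-level statement, whereas you additionally reconstruct the passage from affine continuous functions on $\mathcal{M}^\tau(\Omega)$ back to elements of $C(\Omega)$ (a surjectivity fact the paper invokes separately in Appendix A), but this is elaboration rather than a different argument.
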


\begin{proof}
The fact that each  $\mu\in\sE^\tau(\Omega)$  is the unique equilibrium state for some element in $C(\Omega)$ is exactly 
Theorem 1 of \cite{Phelps_Dynamics and Randomness(2002)Santiago}; the converse follows from the fact that $h^\tau$ is affine. 
\end{proof}

Recall that   the directional derivative of  $P^{\tau}$ at $f$ in the direction $g$ is denoted by   $dP^{\tau}(f;g)$  for all  $(f,g)\in C(\Omega)^2$  (\cf \S \ref{subsection-Convex analysis}).

\begin{lemma}\label{Gateaux-diffe-in-the-direction-dense-subspace}
Let $f\in C(\Omega)$ and let $W$ be a vector space dense in $C(\Omega)$.  If the map $W\ni g\mapsto dP^{\tau}(f;g)$ is real-valued and linear,  then $P^{\tau}$ is Gateaux differentiable at $f$. 
\end{lemma}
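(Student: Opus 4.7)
The plan is to show that the sublinear map $L := dP^{\tau}(f;\cdot)$ is continuous on $C(\Omega)$, and then use continuity to propagate the linearity from the dense subspace $W$ to all of $C(\Omega)$.

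First I would verify that $L$ is finite and Lipschitz on $C(\Omega)$. Since $P^{\tau}$ is convex and continuous on the Banach space $C(\Omega)$, it is locally Lipschitz; picking any constant $K$ such that $P^{\tau}$ is $K$-Lipschitz on a neighbourhood of $f$, one gets, for every $g\in C(\Omega)$ and every $\varepsilon>0$ small enough,
\[
\Big|\tfrac{P^{\tau}(f+\varepsilon g)-P^{\tau}(f)}{\varepsilon}\Big|\le K\,\|g\|,
\]
so $L$ is real-valued on $C(\Omega)$ with $|L(g)|\le K\|g\|$. Convexity of $P^{\tau}$ gives positive homogeneity $L(\alpha g)=\alpha L(g)$ for $\alpha\ge 0$ and subadditivity $L(g+h)\le L(g)+L(h)$; combined with the bound $|L(g)|\le K\|g\|$, subadditivity yields $L(g)-L(h)\le L(g-h)\le K\|g-h\|$ and, symmetrically, $|L(g)-L(h)|\le K\|g-h\|$, so $L$ is Lipschitz continuous on $C(\Omega)$.

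Next I would pass linearity from $W$ to $C(\Omega)$. By hypothesis $L(-g)=-L(g)$ for all $g\in W$. Given any $g\in C(\Omega)$, density of $W$ yields a sequence $(g_n)\subset W$ with $g_n\to g$; by continuity of $L$ we obtain
\[
L(-g)=\lim_n L(-g_n)=-\lim_n L(g_n)=-L(g).
\]
Combining this with subadditivity one derives additivity: $L(g+h)\le L(g)+L(h)$ and, using $-L(-x)=L(x)$,
\[
L(g+h)=-L(-g-h)\ge -\bigl(L(-g)+L(-h)\bigr)=L(g)+L(h),
\]
so $L$ is additive. Together with homogeneity for all real scalars (which follows from $L(-g)=-L(g)$ and positive homogeneity), $L$ is a continuous linear functional on $C(\Omega)$.

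Finally, since $C(\Omega)^{*}=\widetilde{\mathcal{M}}(\Omega)$, there is $\mu\in\widetilde{\mathcal{M}}(\Omega)$ with $L(g)=\mu(g)$ for all $g\in C(\Omega)$, and the identity $dP^{\tau}(f;g)=\mu(g)$ is exactly the Gateaux differentiability of $P^{\tau}$ at $f$ (with Gateaux differential $\mu$). I do not foresee any real obstacle; the only delicate point is the verification that $L$ is globally Lipschitz (and thus continuous) from convexity and continuity of $P^{\tau}$, without which density of $W$ alone would not let us transport the linearity.
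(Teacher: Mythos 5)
Your proof is correct, and it follows a genuinely different route from the paper. The paper's argument goes through subgradients: it picks any subgradient $\mu$ of $P^{\tau}$ at $f$ (one exists since $P^{\tau}$ is convex and continuous), observes that the subgradient inequality forces $dP^{\tau}(f;g)\ge\mu(g)$ for all $g\in W$, and then uses the hypothesis that $g\mapsto dP^{\tau}(f;g)$ is linear on $W$ to conclude $dP^{\tau}(f;g)=\mu(g)$ on $W$ (a linear map that is nonnegative on a vector space vanishes identically). Density of $W$ then pins down $\mu$ uniquely, so the set of subgradients is a singleton, and Gateaux differentiability follows by invoking Proposition~5.3 of Ekeland--Temam. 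You instead work directly with the directional derivative $L=dP^{\tau}(f;\cdot)$: you establish its Lipschitz continuity from the local Lipschitz continuity of the convex continuous map $P^{\tau}$, transport the relation $L(-g)=-L(g)$ from $W$ to all of $C(\Omega)$ by density, and then upgrade sublinearity to linearity, identifying $L$ itself as a continuous linear functional; this hits the definition of Gateaux differentiability head-on. Your version avoids the subgradient formalism and the Ekeland--Temam equivalence altogether, at the small cost of the extra (but standard) step that convex plus continuous on a Banach space implies locally Lipschitz. The paper's version is shorter because it lets the subgradient do the bounding, and it fits the thermodynamic-formalism language (subgradients are equilibrium states) used throughout the rest of the argument; yours is more self-contained. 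Both arguments are sound.
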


\begin{proof}
Let $\delta P^{\tau}(f)$ denote the set of subgradients of $P^\tau$ at $f$ (\ie $\delta P^{\tau}(f)$ is the set of equilibrium states for $f$,  \cf \S \ref{subsection-Convex analysis}).  
   For each   $(\mu,g)\in\delta P^{\tau}(f)\times W$ 
  we have
  \[\forall\varepsilon>0,\ \ \ \ \ \ \ P^{\tau}(f+\varepsilon g)-P^{\tau}(f)\ge\varepsilon\mu(g);\]
  dividing by $\varepsilon$ and letting $\varepsilon\rightarrow 0$ yields
  $dP^{\tau}(f;g)-\mu(g)\ge 0$ 
  hence $dP^{\tau}(f;g)=\mu(g)$ by linearity of $dP^{\tau}(f;\cdot)_{\mid W}$ and $\mu_{\mid W}$.  Since $W$ is 
 dense in $C(\Omega)$  it follows that  $\delta P^{\tau}(f)$ is a singleton, which proves the lemma (\cite{Ekeland_Teman},  Proposition 5.3;  \cf \S \ref{subsection-Convex analysis}).
   \end{proof}

For each  $((f,f_1...,f_n),n)\in C(\Omega)^{n+1}\times\N$ let   $L_{f,(f_1,...,f_n)}$  be the function defined on $\R^n$ by    
   \[
 \forall (t_1,...,t_n)\in\R^n,\ \ \ \ \ \ \ \ \ \ L_{f,(f_1,...,f_n)} ((t_1,...,t_n))=P^{\tau}\left(f+\sum_{k=1}^n t_k f_k\right)-P^{\tau}(f); 
  \]
  clearly,  $L_{f,(f_1,...,f_n)}$ is real-valued and convex; note that when    $(\nu_\alpha,t_\alpha)$ is a net as in   (\ref{introduction-eq30}),  $L_{f,(f_1,...,f_n)}$ is  the limiting log-moment generating function associated with the net $((\widehat{f_1},...,\widehat{f_n})[\nu_{\alpha}],t_\alpha)$ (\S \ref{subsection-Large deviations}).

\begin{lemma}\label{uniqueness-equil-implies-diffe-logmoment}
Let   $((f,f_1,...,f_n),(t_1,...,t_n),n)\in C(\Omega)^{n+1}\times\R^n\times\N$.  If  
\[\forall j\in\{1,...,n\},\ \ \ \ \ \ \ \ dP^{\tau}(f+\sum_{k=1}^n t_k f_k;f_j)=-dP^{\tau}(f+\sum_{k=1}^n t_k f_k;-f_j),\]   then 
  $L_{f,(f_1,...,f_n)}$ is differentiable at $(t_1,...,t_n)$.
  \end{lemma}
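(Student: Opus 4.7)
The plan is to identify the one-sided partial derivatives of $L=L_{f,(f_1,\ldots,f_n)}$ at $(t_1,\ldots,t_n)$ with the directional derivatives of $P^{\tau}$ at the point $g:=f+\sum_{k=1}^{n}t_k f_k$ in the directions $\pm f_j$, and then invoke the standard fact that a finite convex function on $\R^n$ whose partial derivatives all exist at a point is differentiable there (Rockafellar, Theorem 25.2).

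First I would compute the right partial derivative of $L$ in the $j$-th coordinate direction:
\[
\lim_{\varepsilon\to 0^+}\frac{L((t_1,\ldots,t_j+\varepsilon,\ldots,t_n))-L((t_1,\ldots,t_n))}{\varepsilon}
=\lim_{\varepsilon\to 0^+}\frac{P^{\tau}(g+\varepsilon f_j)-P^{\tau}(g)}{\varepsilon}=dP^{\tau}(g;f_j),
\]
and the left partial derivative:
\[
\lim_{\varepsilon\to 0^+}\frac{L((t_1,\ldots,t_n))-L((t_1,\ldots,t_j-\varepsilon,\ldots,t_n))}{\varepsilon}
=-\lim_{\varepsilon\to 0^+}\frac{P^{\tau}(g-\varepsilon f_j)-P^{\tau}(g)}{\varepsilon}=-dP^{\tau}(g;-f_j).
\]
Because $L$ is real-valued convex on $\R^n$ (hence continuous), both of these one-sided limits exist and are finite; they are exactly the one-sided derivatives appearing as directional derivatives of the convex function $P^{\tau}$ at $g$, as reviewed in \S \ref{subsection-Convex analysis}.

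Next, the hypothesis $dP^{\tau}(g;f_j)=-dP^{\tau}(g;-f_j)$ for each $j\in\{1,\ldots,n\}$ immediately translates into the equality of the right and left partial derivatives of $L$ at $(t_1,\ldots,t_n)$ for every coordinate $j$. Consequently, the $n$ partial derivatives $\partial L/\partial t_j$ exist (as finite real numbers) at $(t_1,\ldots,t_n)$.

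Finally, I would invoke the classical theorem of convex analysis that a finite convex function on $\R^n$ is (Fréchet) differentiable at a point whenever its $n$ partial derivatives exist there (Rockafellar, Theorem 25.2); alternatively, one can argue directly that the sublinear functional $h\mapsto dL((t_1,\ldots,t_n);h)$ is then both linear on the coordinate axes and sublinear, and that any sublinear function on $\R^n$ that is linear on each coordinate axis must coincide with the linear functional $h\mapsto \sum_j (\partial L/\partial t_j)\, h_j$. Either route yields differentiability of $L_{f,(f_1,\ldots,f_n)}$ at $(t_1,\ldots,t_n)$. There is no substantive obstacle here; the content of the lemma is just the translation between directional derivatives of $P^{\tau}$ in the directions $f_j$ and partial derivatives of the restricted map $L$, combined with the finite-dimensional convex analysis criterion for differentiability.
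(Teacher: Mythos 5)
Your proof is correct and follows essentially the same route as the paper's: identify the one-sided $j$-th partial derivatives of $L_{f,(f_1,\ldots,f_n)}$ with $dP^{\tau}(g;f_j)$ and $-dP^{\tau}(g;-f_j)$, note that the hypothesis forces them to coincide (and be finite since $L$ is real-valued on all of $\R^n$), and conclude by Rockafellar's Theorem 25.2.
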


\begin{proof}
For each $\varepsilon>0$ and for each $j\in\{1,...,n\}$ we have 
\begin{multline*}
L_{f,(f_1,...,f_n)}((t_1,...,t_j+\varepsilon,..,t_n))-L_{f,(f_1,...,f_n)}((t_1,..,t_n))
\\
=P^{\tau}\left(f+\sum_{k=1}^n t_k f_k+\varepsilon f_j\right)-P^{\tau}\left(f+\sum_{k=1}^n t_k f_k\right)
\end{multline*}
and
\begin{multline*}
L_{f,(f_1,...,f_n)}((t_1,...,t_j-\varepsilon,..,t_n))-L_{f,(f_1,...,f_n)}((t_1,..,t_n))
\\
=P^{\tau}\left(f+\sum_{k=1}^n t_k f_k-\varepsilon f_j\right)-P^{\tau}\left(f+\sum_{k=1}^n t_k f_k\right).
\end{multline*}
Dividing by $\varepsilon$ and  letting $\varepsilon\rightarrow 0$ in   both above expressions   shows that  the $j^{th}$-partial derivative  of $L_{f,(f_1,...,f_n)}$ at $(t_1,...,t_n)$ exists and fulfils
 \[\frac{\delta^j L_{f,(f_1,...,f_n)}}{\delta t_j}(t_1,...,t_n)=dP^{\tau}(f+\sum_{k=1}^n t_k f_k;f_j); 
 \]
 furthermore, $\frac{\delta^j L_{f,(f_1,...,f_n)}}{\delta t_j}(t_1,...,t_n)$ is finite since  the effective domain of $L_{f,(f_1,...,f_n)}$ is  $\R^n$;   the conclusion follows from Theorem 25.2 of  \cite{Rockafellar-70}.
\end{proof}

\begin{lemma}\label{diffe-logmoment-vs-uniqueness-equil}
Let $f\in C(\Omega)$   and  let $W$ be a vector space dense in $C(\Omega)$. If  $L_{f,(g,h)}$  is differentiable at   zero for all 
$(g,h)\in W^2$, then $P^{\tau}$ is Gateaux differentiable at $f$.  
\end{lemma}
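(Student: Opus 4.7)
The plan is to reduce the statement to Lemma \ref{Gateaux-diffe-in-the-direction-dense-subspace} by showing that the map $W \ni g \mapsto dP^{\tau}(f;g)$ is real-valued and linear under the hypothesis.

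First I would note that if $L_{f,(g,h)}$ is differentiable at zero, then in particular its partial derivatives at zero exist and are finite. A direct computation (identical to the one carried out in the proof of Lemma \ref{uniqueness-equil-implies-diffe-logmoment}) shows that
\[\frac{\partial L_{f,(g,h)}}{\partial t_1}(0,0) = dP^{\tau}(f;g) = -dP^{\tau}(f;-g),\]
and analogously for the second coordinate. So for every $g \in W$ the directional derivative $dP^{\tau}(f;g)$ is a real number and satisfies $dP^{\tau}(f;-g) = -dP^{\tau}(f;g)$.

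The key step is to extract additivity of $g \mapsto dP^{\tau}(f;g)$ from the \emph{joint} differentiability at zero (as opposed to mere existence of the two partial derivatives). The idea is to evaluate the directional derivative of $L_{f,(g,h)}$ at $(0,0)$ in the diagonal direction $(1,1)$: on the one hand, differentiability at zero forces this quantity to equal $dP^{\tau}(f;g) + dP^{\tau}(f;h)$; on the other hand, by definition of $L_{f,(g,h)}$,
\[\lim_{\varepsilon \to 0^+} \frac{L_{f,(g,h)}(\varepsilon,\varepsilon) - L_{f,(g,h)}(0,0)}{\varepsilon} = \lim_{\varepsilon \to 0^+} \frac{P^{\tau}(f + \varepsilon(g+h)) - P^{\tau}(f)}{\varepsilon} = dP^{\tau}(f;g+h).\]
This yields $dP^{\tau}(f;g+h) = dP^{\tau}(f;g) + dP^{\tau}(f;h)$ for all $(g,h) \in W^2$. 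Positive homogeneity $dP^{\tau}(f;\lambda g) = \lambda\, dP^{\tau}(f;g)$ for $\lambda > 0$ is automatic by a change of variables in the defining limit; combined with the negation identity established above, this extends to all $\lambda \in \R$. Hence $g \mapsto dP^{\tau}(f;g)$ is real-valued and linear on $W$.

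Finally, I would invoke Lemma \ref{Gateaux-diffe-in-the-direction-dense-subspace} applied to $f$ and $W$ to conclude that $P^{\tau}$ is Gateaux differentiable at $f$. The main (and really the only nontrivial) obstacle is the additivity step: it is tempting but wrong to derive it from the existence of the partial derivatives alone; one genuinely needs the differentiability of $L_{f,(g,h)}$ at zero in order to identify the directional derivative along $(1,1)$ with the sum $dP^{\tau}(f;g) + dP^{\tau}(f;h)$.
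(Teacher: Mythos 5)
Your proposal is correct and follows essentially the same route as the paper: the hypothesis that $L_{f,(g,h)}$ is differentiable at the origin is used to identify the directional derivative of $L_{f,(g,h)}$ at $(0,0)$ along an arbitrary $(\lambda,\lambda')$ with the value of the (linear) differential $D_{(0,0)}L_{f,(g,h)}$, and simultaneously with $dP^{\tau}(f;\lambda g+\lambda' h)$, which gives linearity of $dP^{\tau}(f;\cdot)$ on $W$ and reduces to Lemma \ref{Gateaux-diffe-in-the-direction-dense-subspace}. The paper handles the general direction $(\lambda,\lambda')$ in one stroke where you treat additivity (direction $(1,1)$) and homogeneity separately, but this is only a presentational difference; your remark that mere existence of the two partial derivatives would be insufficient is exactly the point that the joint differentiability hypothesis supplies.
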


\begin{proof}
 For each  $((g,h),(\lambda,\lambda'))\in W^2\times\R^2$ and for each $\varepsilon>0$ we have
 \[
L_{f,(g,h)}(\varepsilon (\lambda,\lambda'))=P^{\tau}(f+\varepsilon(\lambda g+\lambda' h))-P^\tau(f); 
\]
dividing by $\varepsilon$ and letting $\varepsilon\rightarrow 0$  yields
 \[
  dP^{\tau}(f;\lambda g+\lambda' h)=D_{(0,0)} L_{f,(g,h)}((\lambda,\lambda'))
\]
hence  $dP^{\tau}(f;\cdot)_{\mid W}$ is real-valued and linear on $W$; the conclusion follows from Lemma \ref{Gateaux-diffe-in-the-direction-dense-subspace}.
\end{proof}

\begin{lemma}\label{diffe-logmoment-implies-uniqueness-equil}
Let   $f\in C(\Omega)$ and  let $E$ be a set generating a vector space  $W$  dense in $C(\Omega)$.  If  $L_{f,(f_1,...,f_n)}$ is differentiable on  $\R^n$  for  all  $((f_1,...,f_n),n)\in E^n\times\N$, then $P^{\tau}$ is Gateaux differentiable at  $f+g$ for all $g\in W$.
  \end{lemma}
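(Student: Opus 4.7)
The plan is to reduce differentiability of $P^\tau$ at $f+g$ to the hypothesis via Lemma \ref{diffe-logmoment-vs-uniqueness-equil}, using the fact that $W$ is spanned by $E$ so that any finite collection of vectors in $W$ can be simultaneously written as linear combinations of finitely many elements of $E$.

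Fix $g\in W$. By Lemma \ref{diffe-logmoment-vs-uniqueness-equil} applied at the point $f+g$, it suffices to show that $L_{f+g,(u,v)}$ is differentiable at $(0,0)$ for every $(u,v)\in W^2$. Fix such a pair $(u,v)$. Since $W=\textnormal{span}(E)$, we can choose finitely many elements $e_1,\dots,e_m\in E$ and real numbers $a_k,b_k,c_k$ ($1\le k\le m$) so that
\[
g=\sum_{k=1}^m a_k e_k,\qquad u=\sum_{k=1}^m b_k e_k,\qquad v=\sum_{k=1}^m c_k e_k.
\]
Define the affine map $\Phi:\R^2\to\R^m$ by $\Phi(t_1,t_2)=(a_k+t_1b_k+t_2c_k)_{1\le k\le m}$. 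Then for every $(t_1,t_2)\in\R^2$,
\[
f+g+t_1u+t_2v=f+\sum_{k=1}^m(a_k+t_1b_k+t_2c_k)e_k,
\]
which yields
\[
L_{f+g,(u,v)}(t_1,t_2)=L_{f,(e_1,\dots,e_m)}\bigl(\Phi(t_1,t_2)\bigr)+P^{\tau}(f)-P^{\tau}(f+g).
\]

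By hypothesis, $L_{f,(e_1,\dots,e_m)}$ is differentiable on $\R^m$, and $\Phi$ is affine hence smooth; therefore the right-hand side is differentiable everywhere in $(t_1,t_2)$, in particular at $(0,0)$. This shows that $L_{f+g,(u,v)}$ is differentiable at $(0,0)$ for every $(u,v)\in W^2$, and Lemma \ref{diffe-logmoment-vs-uniqueness-equil} then gives that $P^\tau$ is Gateaux differentiable at $f+g$, as required.

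There is essentially no serious obstacle: the argument amounts to observing that the finite-dimensional hypothesis, applied to a common list $e_1,\dots,e_m\in E$ representing $g,u,v$ simultaneously, transfers through an affine substitution to the two-parameter function relevant to Lemma \ref{diffe-logmoment-vs-uniqueness-equil}. The only care needed is to combine $g,u,v$ into a single finite family from $E$ before applying the hypothesis, and to remember that differentiability on all of $\R^m$ (rather than at a single point) is needed so that the evaluation at $\Phi(0,0)=(a_1,\dots,a_m)$ is covered.
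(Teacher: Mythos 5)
Your proof is correct and follows essentially the same route as the paper: both reduce to Lemma \ref{diffe-logmoment-vs-uniqueness-equil} and exploit that a common finite list from $E$ represents $g,u,v$ simultaneously, so that $L_{f+g,(u,v)}$ near $0$ is an affine reparametrization (up to an additive constant) of $L_{f,(e_1,\dots,e_m)}$ near the point $(a_1,\dots,a_m)$. The paper writes this out as an explicit identity between the two $L$-functions, while you package the same observation as a composition with an affine map $\Phi$ and invoke the chain rule; this is a cosmetic, not substantive, difference.
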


\begin{proof}
For each $((f_1,...,f_{\max\{m,m_1,m_2\}}),(t_1,...,t_m,u_1,...,u_{m_1}, v_1,...,v_{m_2}),(m,m_1,m_2))\in E^{\max\{m,m_1,m_2\}}\times\R^{m+m_1+m_2}\times\N^3$,  for each $\varepsilon>0$ and for each $\varepsilon'>0$ we have 
 \begin{multline*}
 L_{f+\sum_{k=1}^m t_k f_k, (\sum_{k=1}^{m_1} u_k f_k, \sum_{k=1}^{m_2} v_k f_k)} ((\varepsilon,\varepsilon'))
 \\
 = L_{f+\sum_{k=1}^m t_k f_k, (f_1,...,f_{m_1}, f_1,...,f_{m_2})}((\varepsilon  u_1,...,\varepsilon u_{m_1}, \varepsilon' v_1,...,\varepsilon' v_{m_2}))
 \\
 = (P^{\tau}(f+\sum_{k=1}^m t_k f_k+\varepsilon\sum_{k=1}^{m_1} u_k f_k + \varepsilon'\sum_{k=1}^{m_2} v_k f_k) 
- P^{\tau}(f)) + (P^{\tau}(f) - P^{\tau}(f+\sum_{k=1}^m t_k f_k))
 \\
=L_{f, (f_1,...,f_m,f_1,...,f_{m_1}, f_1,...,f_{m_2})}((t_1,...,t_m,\varepsilon u_1,...,\varepsilon u_{m_1},\varepsilon' v_1,...,\varepsilon' v_{m_2}))
\\
-L_{f, (f_1,...,f_m,f_1,...,f_{m_1}, f_1,...,f_{m_2})}((t_1,...,t_m,0,...,0))
 \end{multline*}
 so that the differentiability of $L_{f, (f_1,...,f_m,f_1,...,f_{m_1}, f_1,...,f_{m_2})}$ at $(t_1,...,t_m,0,...,0)$ implies the differentiability of 
 $L_{f+\sum_{k=1}^m t_k f_k, (\sum_{k=1}^{m_1} u_k f_k, \sum_{k=1}^{m_2} v_k f_k)}$
  at $0$; the  conclusion  follow from Lemma \ref{diffe-logmoment-vs-uniqueness-equil}. 
    \end{proof}

For each $f\in C(\Omega)$ let $Q_f$ be the map defined on $C(\Omega$ by 
\[\forall g\in C(\Omega),\ \ \ \ \ \ \ \ \ Q_f(g)=P^{\tau}(f+g)-P^{\tau}(f).\]

\begin{lemma}\label{Fenchel-Legendre-transform-of-Q_f}
 For each $f\in C(\Omega)$
the function $Q_f$ is proper convex continuous and  its Legendre-Fenchel transform $Q_f^*$  fulfils 
\[
\forall\mu\in\widetilde{\mathcal{M}}(\Omega),\ \ \ \ \ \ \ \  Q_f^*(\mu)=
\left\{
\begin{array}{ll}
P^{\tau}(f)-h^{\tau}(\mu)-\mu(f) & \textnormal{if $\mu\in\mathcal{M}^{\tau}(\Omega)$}
\\ 
+\infty & \textnormal{if $\mu\in\widetilde{\mathcal{M}}(\Omega)\setminus \mathcal{M}^{\tau}(\Omega)$}.
\end{array}
\right.
\]
In particular,  $Q_f^*$ vanishes exactly on the set of equilibrium states for $f$. 
\end{lemma}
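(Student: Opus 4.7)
The plan is to reduce everything to properties of $P^{\tau}$ via the obvious relation $Q_f(g)=P^{\tau}(f+g)-P^{\tau}(f)$. First, since $P^{\tau}$ is finite convex and continuous on $C(\Omega)$ (\cf \S \ref{subsection-Thermodynamic formalism}) and $Q_f$ is obtained from $P^{\tau}$ by an affine change of variable in the argument together with subtraction of the real constant $P^{\tau}(f)$, the map $Q_f$ is also finite convex and continuous; in particular, $Q_f$ is proper.

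For the Legendre--Fenchel transform, I would compute directly: for each $\mu\in\widetilde{\mathcal{M}}(\Omega)$, using the change of variable $h=f+g$,
\[
Q_f^*(\mu)=\sup_{g\in C(\Omega)}\bigl\{\mu(g)-P^{\tau}(f+g)+P^{\tau}(f)\bigr\}
=P^{\tau}(f)-\mu(f)+\sup_{h\in C(\Omega)}\bigl\{\mu(h)-P^{\tau}(h)\bigr\}.
\]
The last supremum is exactly $(P^{\tau})^*(\mu)$, the Legendre--Fenchel transform of $P^{\tau}$ viewed as a function on the dual $\widetilde{\mathcal{M}}(\Omega)^*=\{\widehat g:g\in C(\Omega)\}$. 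The variational principle (\ref{subsection-Thermodynamic formalism-eq20}) says precisely that $P^{\tau}=J^*$, where $J:\widetilde{\mathcal{M}}(\Omega)\to\,]-\infty,+\infty]$ is defined by $J(\mu)=-h^{\tau}(\mu)$ for $\mu\in\mathcal{M}^{\tau}(\Omega)$ and $J(\mu)=+\infty$ otherwise. The function $J$ is proper (since $\mathcal{M}^{\tau}(\Omega)\neq\emptyset$), convex (since $h^{\tau}$ is affine on the convex set $\mathcal{M}^{\tau}(\Omega)$), and lower semi-continuous (since $h^{\tau}$ is upper semi-continuous on the closed set $\mathcal{M}^{\tau}(\Omega)$). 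Applying the Fenchel--Moreau theorem in the locally convex space $\widetilde{\mathcal{M}}(\Omega)$ (\cf \S \ref{subsection-Convex analysis}) gives $J=J^{**}=(P^{\tau})^*$, so
\[
(P^{\tau})^*(\mu)=\begin{cases}-h^{\tau}(\mu)&\text{if }\mu\in\mathcal{M}^{\tau}(\Omega),\\+\infty&\text{if }\mu\in\widetilde{\mathcal{M}}(\Omega)\setminus\mathcal{M}^{\tau}(\Omega),\end{cases}
\]
and substituting back into the formula for $Q_f^*(\mu)$ yields the claimed expression.

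Finally, for the vanishing set: taking $g=0$ in the definition of $Q_f^*$ gives $Q_f^*(\mu)\ge 0$ for every $\mu$. From the explicit formula just obtained, $Q_f^*(\mu)=0$ holds if and only if $\mu\in\mathcal{M}^{\tau}(\Omega)$ and $P^{\tau}(f)=h^{\tau}(\mu)+\mu(f)$, which by the variational principle is exactly the definition of $\mu$ being an equilibrium state for $f$.

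The only non-routine step is the identification $(P^{\tau})^*=J$, which rests on the Fenchel--Moreau theorem; verifying the lower semi-continuity and convexity of $J$ (in particular, noting that $h^{\tau}$ is affine on $\mathcal{M}^{\tau}(\Omega)$, so $-h^{\tau}$ is convex there) is the main point to check carefully.
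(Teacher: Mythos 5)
Your proof is correct and follows essentially the same approach as the paper: both express the pressure as the Legendre--Fenchel transform of a proper, convex, lower semi-continuous function built from $-h^{\tau}$ (the paper folds the $-\mu(f)$ term into its auxiliary function $U$, while you separate it out by a change of variables and apply biconjugation to the $f$-independent $J$; these differ only by a continuous affine perturbation and the argument is the same). Your explicit treatment of the ``in particular'' clause is a welcome addition, as the paper leaves that step implicit.
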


\begin{proof}
Let $f\in C(\Omega)$.   Clearly, $Q_f$ is proper  convex and continuous  since $P^{\tau}$ and $\widehat{f}$ are (\cf \S \ref{subsection-Thermodynamic formalism} and \S \ref{subsection-Convex analysis}). 
 Putting
\[
U(\mu)=\left\{
\begin{array}{ll}
-\mu(f)-h^{\tau}(\mu) & \textnormal{if $\mu\in\mathcal{M}^{\tau}(\Omega)$}
\\ 
+\infty & \textnormal{if $\mu\in\widetilde{\mathcal{M}}(\Omega)\setminus \mathcal{M}^{\tau}(\Omega)$},
\end{array}
\right.
\]
 we have
\[P^{\tau}(f+g)=\sup_{\mathcal{M}^{\tau}(\Omega)}\{\mu(f+g)+h^{\tau}(\mu)\}=
\sup_{\mu\in\widetilde{\mathcal{M}}(\Omega)}
\{\mu(g)-U(\omega)\}.\]
Since $h^{\tau}$ is bounded  affine and upper semi-continuous  (\cf \S \ref{subsection-Thermodynamic formalism}),  $U$ is proper convex and  lower semi-continuous; consequently, we have $U=U^{**}$ (\cf \S \ref{subsection-Convex analysis}) \ie 
\[\forall \mu\in\widetilde{\mathcal{M}}(\Omega),\ \ \ \ \ \ \ U(\mu)=\sup_{g\in C(\Omega)}\{\mu(g)-P^{\tau}(f+g)\}=
\sup_{g\in C(\Omega)}\{\mu(g)-P^{\tau}(f)-Q_f(g)\}=\]
\[-P^{\tau}(f)+\sup_{g\in C(\Omega)}\{\mu(g)-Q_f(g)\}=-P^{\tau}(f)+Q_f^*(\mu),\]which
proves the lemma. 
 \end{proof}

For each $((f,f_1,...,f_n),n)\in C(\Omega)^{n+1}\times\N$ let $I_{f,(f_1,...,f_n)}$  be the function defined on $\R^n$ by 
\[\forall x\in\R^n,\ \ \ \ \ \ \ \ I_{f,(f_1,...,f_n)}(x)=\inf\{Q_f^*(\mu):\mu\in\mathcal{M}(\Omega), (\mu(f_1),...,\mu(f_n))=x\}.\]
Since $\mathcal{M}^{\tau}(\Omega)$ is compact and $Q_f^*$ is lower semi-continuous, $I_{f,(f_1,...,f_n)}(x)$ is a minimum if and only if $x\in (\widehat{f_1},...,\widehat{f_n})(\mathcal{M}^{\tau}(\Omega))$  by Lemma \ref {Fenchel-Legendre-transform-of-Q_f}, hence
\begin{equation}\label{If-propre}
I_{f,(f_1,...,f_n)}(x)=\left\{
\begin{array}{ll}
Q_f^*(\mu_x) \textnormal{\ for some
$\mu_x\in\mathcal{M}^{\tau}(\Omega)$} & \textnormal{if $x\in (\widehat{f_1},...,\widehat{f_n})(\mathcal{M}^{\tau}(\Omega))$}
\\
+\infty &  \textnormal{if $x\not\in (\widehat{f_1},...,\widehat{f_n})(\mathcal{M}^{\tau}(\Omega))$}.
\end{array}
\right.
\end{equation}

\begin{lemma}\label{convexity-If}
The function  $I_{f,(f_1,...,f_n)}$  is proper convex and  lower semi-continuous for all $((f,f_1,...,f_n),n)\in C(\Omega)^{n+1}\times\N$.
\end{lemma}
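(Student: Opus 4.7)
The plan is to verify each of the three asserted properties of $I := I_{f,(f_1,\ldots,f_n)}$ by exploiting the structure of $Q_f^*$ given in Lemma \ref{Fenchel-Legendre-transform-of-Q_f} together with the compactness of $\mathcal{M}(\Omega)$. From the variational principle (second equality in (\ref{subsection-Thermodynamic formalism-eq20})) we have $Q_f^* \ge 0$, so in particular $I$ takes values in $[0,+\infty]$; moreover $Q_f^*$ is proper convex and lower semi-continuous on $\widetilde{\mathcal{M}}(\Omega)$, being a Legendre-Fenchel transform of the proper continuous convex function $Q_f$.

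For properness, I would pick any $\mu\in\mathcal{M}^{\tau}(\Omega)$ (which is non-empty) and let $x=(\mu(f_1),\ldots,\mu(f_n))$; then $I(x)\le Q_f^*(\mu) = P^\tau(f)-h^\tau(\mu)-\mu(f) <+\infty$, so the effective domain of $I$ is non-empty. For convexity, given $x,y$ in the effective domain and $\lambda\in\ ]0,1[$, I would invoke (\ref{If-propre}) to obtain $\mu_x,\mu_y\in\mathcal{M}^{\tau}(\Omega)$ realizing $I(x)=Q_f^*(\mu_x)$ and $I(y)=Q_f^*(\mu_y)$ with the correct evaluation coordinates. Since $\mathcal{M}^{\tau}(\Omega)$ is convex, $\lambda\mu_x+(1-\lambda)\mu_y\in\mathcal{M}^{\tau}(\Omega)$ and its $n$-tuple of integrals against $(f_1,\ldots,f_n)$ is $\lambda x+(1-\lambda)y$; since $h^\tau$ is affine on $\mathcal{M}^{\tau}(\Omega)$ and $\mu\mapsto \mu(f)$ is linear, the restriction of $Q_f^*$ to $\mathcal{M}^{\tau}(\Omega)$ is affine, and one reads off $I(\lambda x+(1-\lambda)y)\le \lambda I(x)+(1-\lambda)I(y)$ directly.

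For lower semi-continuity, I would appeal to the following general fact: if $\phi\colon X\to Y$ is continuous with $X$ compact and $Y$ Hausdorff, and $g\colon X\to\ ]-\infty,+\infty]$ is lower semi-continuous, then $y\mapsto\inf\{g(x):x\in\phi^{-1}(y)\}$ is lower semi-continuous on $Y$. The proof is short: for $r\in\R$ the set $\{g\le r\}$ is closed in $X$ hence compact, its direct image under $\phi$ is compact hence closed in $Y$, and by compactness of the fibres of $\phi$ (giving attainment of the infimum over $\phi^{-1}(y)$ whenever finite) this image coincides with $\{\inf_{\phi^{-1}(\cdot)}g\le r\}$. Applying this with $X=\mathcal{M}(\Omega)$, $\phi=(\widehat{f_1},\ldots,\widehat{f_n})$, and $g=Q_f^*$ yields the lower semi-continuity of $I$ on $\R^n$.

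The only delicate point is the lower semi-continuity step, since one must be careful that the infimum defining $I(x)$ is attained on the fibres for those $x$ where $I(x)<+\infty$; this is precisely what (\ref{If-propre}) records, and it follows from the compactness of $\mathcal{M}(\Omega)$ combined with the lower semi-continuity of $Q_f^*$. The remaining verifications (properness, convexity) are formal consequences of the affineness of $h^\tau$ and the convexity of $\mathcal{M}^{\tau}(\Omega)$.
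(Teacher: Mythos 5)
Your proof is correct and covers the three claimed properties, but it takes a somewhat different route from the paper's, particularly for lower semi-continuity. The paper argues lsc directly with a net argument: given $x_i\to x$ with $\liminf I(x_i)<\delta$, it extracts a subnet of measures $\mu_j\in\mathcal{M}^\tau(\Omega)$ realizing values below $\delta$ with the correct push-forwards, passes to a convergent subnet $\mu'_j\to\mu'$ (using compactness), and concludes from the lsc of $Q_f^*$ that $I(x)\le Q_f^*(\mu')<\delta$. You instead package the same underlying idea — compactness of $\mathcal{M}(\Omega)$ plus lsc of $Q_f^*$ — into a general lemma on inf-projections (continuous image of a compact sublevel set is closed, and the fibres attain the infimum), which yields the sublevel sets of $I$ as closed sets directly. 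Both arguments are sound; yours is perhaps more modular and reusable, while the paper's is self-contained and avoids introducing an auxiliary statement. For convexity, the paper only exploits the convexity of $Q_f^*$ (it works with infima without requiring attainment), whereas you use the stronger observation that $Q_f^*$ restricted to $\mathcal{M}^\tau(\Omega)$ is affine and invoke the minimizers $\mu_x,\mu_y$ furnished by (\ref{If-propre}); this is valid since attainment on the fibre is guaranteed exactly when $I$ is finite and the inequality is trivial otherwise, but the paper's version is slightly more economical in that it bypasses the attainment question at this stage. Properness is handled essentially identically in both.
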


\begin{proof}
Let $((f,f_1,...,f_n),n)\in C(\Omega)^{n+1}\times\N$, 
let $x\in\mathbb{R}^n$, let $(x_i)$ be a net in $\R^n$ converging to $x$, and
assume that $\liminf I_{f,(f_1,...,f_n)}(x_i)<\delta$ for some real $\delta$. There exits a 
subnet $(x_j)$ of $(x_i)$ such that eventually $I_{f,(f_1,...,f_n)}(x_j)<\delta$ and thus
$Q_f^*(\mu_j)<\delta$ for some $\mu_j\in\mathcal{M}^{\tau}(\Omega)$
satisfying $(\mu_j(f_1),...,\mu_j(f_n))=x_j$. Let $(\mu'_j)$ be a subnet of $(\mu_j)$
converging to some $\mu'\in\mathcal{M}^{\tau}(\Omega)$; note
that $(\mu'(f_1),...,\mu'(f_n))=x$. We get
\[I_{f,(f_1,...,f_n)}(x)\le Q_f^*(\mu')\le\liminf\  Q_f^*(\mu'_j)<\delta,\]
which proves the lower semi-continuity of $I_{f,f_1,...,f_n}$. For each $(x_1,x_2,\beta)\in\R^n\times\R^n\times\ ]0,1[$ we have
\begin{multline*}
I_{f,(f_1,...,f_n)}(\beta x_1+(1-\beta)x_2)=\inf\{Q_f^*(\mu):
\mu\in\mathcal{M}(\mu),(\mu(f_1),...,\mu(f_n))=\beta
x_1+(1-\beta)x_2\}
\\
\le\inf\{Q_f^*(\beta\mu_1+(1-\beta)\mu_2):(\mu_1,
\mu_2)\in\mathcal{M}(\Omega)^2,(\mu_1(f_1),...,\mu_n(f_n))=x_1,
\\
(\mu_2(f_1),...,\mu_2(f_n))=x_2\}
\\
\le\inf\{\beta Q_f^*(\mu_1)+(1-\beta)Q_f^*(\mu_2):(\mu_1,\mu_2)\in\mathcal{M}(\Omega)^2,(\mu_1(f_1),...,\mu_m(f_n))=x_1,
\\
(\mu_2(f_1),...,\mu_2(f_n))=x_2\}\le\beta I_{f,(f_1,...,f_n)}(x_1)+(1-\beta)I_{f,(f_1,...,f_n)}(x_2),
\end{multline*}
 hence $I_{f,f_1,...,f_n}$ is convex; $I_{f,(f_1,...,f_n)}$ is proper  by (\ref{If-propre}).
 \end{proof}

\begin{lemma}\label{equality-IS-LS*}
 We have $I_{f,(f_1,...,f_n)}=L_{f,(f_1,...,f_n)}^*$ for all $((f,f_1,...,f_n),n)\in C(\Omega)^{n+1}\times\N$.
 \end{lemma}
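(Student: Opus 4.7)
The plan is to view $L_{f,(f_1,\ldots,f_n)}$ as a composition $Q_f\circ T$, where $T:\R^n\to C(\Omega)$ is the linear map $T(t_1,\ldots,t_n)=\sum_k t_k f_k$ with adjoint $T^*:\widetilde{\mathcal M}(\Omega)\to\R^n$ given by $T^*\mu=(\mu(f_1),\ldots,\mu(f_n))$, and then to prove the formula $(Q_f\circ T)^*(x)=\inf\{Q_f^*(\mu):T^*\mu=x\}$ by a direct two-sided estimate using the variational principle together with a single minimax exchange.

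\emph{The easy inequality $L^*(x)\le I(x)$.} For any $\mu\in\mathcal{M}^\tau(\Omega)$ with $T^*\mu=x$, the variational principle applied to $f+T(t)$ yields
\[
P^\tau(f+T(t))\ge\mu(f)+\langle t,x\rangle+h^\tau(\mu),
\]
so $\langle t,x\rangle-L(t)\le-\mu(f)-h^\tau(\mu)+P^\tau(f)=Q_f^*(\mu)$ by Lemma~\ref{Fenchel-Legendre-transform-of-Q_f}. Taking $\sup_t$ and then $\inf$ over eligible $\mu$ gives $L^*(x)\le I(x)$; the case in which no such $\mu$ exists is trivial because $I(x)=+\infty$ there.

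\emph{The reverse inequality via minimax.} Because $\mathcal{M}^\tau(\Omega)$ is weak-$*$ compact and $h^\tau$ is upper semi-continuous, the supremum in the variational principle for $P^\tau(f+T(t))$ is attained, so
\[
\langle t,x\rangle-L(t)=\min_{\mu\in\mathcal{M}^\tau(\Omega)}\bigl\{\langle t,x-T^*\mu\rangle-\mu(f)-h^\tau(\mu)\bigr\}+P^\tau(f).
\]
Set $F(t,\mu)=\langle t,x-T^*\mu\rangle-\mu(f)-h^\tau(\mu)$. For each fixed $\mu$, $F(\cdot,\mu)$ is affine (in particular concave) on $\R^n$; for each fixed $t$, $F(t,\cdot)$ is affine plus a lower semi-continuous convex term (namely $-h^\tau$), hence lower semi-continuous and convex on the weak-$*$ compact convex set $\mathcal{M}^\tau(\Omega)$. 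Thus Sion's minimax theorem applies and yields
\[
\sup_{t\in\R^n}\min_{\mu\in\mathcal{M}^\tau(\Omega)}F(t,\mu)=\min_{\mu\in\mathcal{M}^\tau(\Omega)}\sup_{t\in\R^n}F(t,\mu).
\]
For fixed $\mu$, $\sup_t\langle t,x-T^*\mu\rangle$ equals $0$ when $T^*\mu=x$ and $+\infty$ otherwise, so the right-hand side above equals $\inf\{-\mu(f)-h^\tau(\mu):\mu\in\mathcal{M}^\tau(\Omega),\,T^*\mu=x\}$. Adding $P^\tau(f)$ and invoking Lemma~\ref{Fenchel-Legendre-transform-of-Q_f} gives $L^*(x)=I_{f,(f_1,\ldots,f_n)}(x)$, with both sides equal to $+\infty$ when $x\notin(\widehat{f_1},\ldots,\widehat{f_n})(\mathcal{M}^\tau(\Omega))$.

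The only delicate point is the minimax exchange; the hypotheses of Sion's theorem are met essentially for free once one uses that $\mathcal{M}^\tau(\Omega)$ is compact and that the $-h^\tau$ contribution is lower semi-continuous. An alternative formulation would be to invoke the Rockafellar composition formula $(Q_f\circ T)^*(x)=\inf_{T^*\mu=x}Q_f^*(\mu)$, which is valid because $Q_f$ is everywhere finite and continuous on $C(\Omega)$; but the direct minimax argument above is self-contained and keeps the proof within the thermodynamic vocabulary already introduced.
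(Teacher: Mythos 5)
Your proof is correct, but it takes a genuinely different route from the paper's. The paper first establishes $I_{f,(f_1,\ldots,f_n)}^{*}=L_{f,(f_1,\ldots,f_n)}$ by two contradiction arguments (each using the variational identity $L(t)=\sup_{\mu}\{\langle t,(\mu(f_1),\ldots,\mu(f_n))\rangle-Q_f^{*}(\mu)\}$ and, for the upper bound, the attainment guaranteed by (\ref{If-propre})), and then recovers $I=L^{*}$ by biconjugation, which is legitimate because Lemma~\ref{convexity-If} shows $I$ is proper, convex and lower semi-continuous. You instead prove $L^{*}=I$ directly by viewing $L=Q_f\circ T$ and applying Sion's minimax theorem to the bilinear-plus-entropy functional $F(t,\mu)$; the hypotheses are indeed satisfied (the $\mu$-variable lives on the compact convex $\mathcal{M}^{\tau}(\Omega)$, $F(t,\cdot)$ is affine and lower semi-continuous because $-h^{\tau}$ is, and $F(\cdot,\mu)$ is affine on $\R^n$). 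Both arguments ultimately rest on the same ingredients (variational principle, weak-$*$ compactness, upper semi-continuity of $h^{\tau}$), but yours trades the paper's elementary two-sided estimate and biconjugation step for a single invocation of a minimax theorem, and in doing so exhibits the identity as an instance of the Rockafellar composition formula for a linear precomposition, which is a clean conceptual reading the paper does not make explicit. One small remark: your ``easy inequality'' paragraph is logically redundant, since the minimax exchange already delivers the equality in one stroke; it does no harm, but could be omitted.
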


\begin{proof}
Let  $((f,f_1,...,f_n),n)\in C(\Omega)^{n+1}\times\N$ and let $\langle\ ,\ \rangle$ denote the scalar product in $\R^n$.
Suppose  there exists  $t=(t_1,...,t_n)\in\mathbb{R}^n$ such that 
\[\sup_{x\in\mathbb{R}^n}\{\langle
t,x\rangle-I_{f,(f_1,...,f_n)}(x)\}<L_{f,(f_1,...,f_n)}(t).\]  Since
\[L_{f,(f_1,...,f_n)}(t)=Q_f\left(\sum_{k=1}^n t_k f_k\right)=Q_f^{**}\left(\sum_{k=1}^n t_k f_k\right)=\sup_{\mu\in\mathcal{M}^{\tau}(\Omega)}\{\langle t,(\mu(f_1),...,\mu(f_n))\rangle -Q_f^*(\mu)\}\]
by Lemma \ref{Fenchel-Legendre-transform-of-Q_f}, 
there exists $\mu\in\mathcal{M}^{\tau}(\Omega)$ such that
\[\sup_{x\in\mathbb{R}^n}\{\langle t,x\rangle -I_{f,(f_1,...,f_n)}(x)\}<\langle
t,(\mu(f_1),...,\mu(f_n))\rangle-Q_f^*(\mu),\] which gives the contradiction by taking
$x=(\mu(f_1),...,\mu(f_n))$ in the left hand side; therefore, we have  
\begin{equation}\label{equality-IS*-LS-eq20}
\forall t\in\R^n,\ \ \ \ \ \ \ \ I_{f,(f_1,...,f_n)}^*(t)\ge L_{f,(f_1,...,f_n)}(t).
\end{equation}
 If
$\sup_{x\in\mathbb{R}^n}\{\langle t,x\rangle -I_{f,(f_1,...,f_n)}(x)\}>L_{f,(f_1,...,f_n)}(t)$ for
some  $t\in\mathbb{R}^n$, then \[\langle
t,x\rangle-I_{f,(f_1,...,f_n)}(x)>\sup_{\mu\in\mathcal{M}^{\tau}(\Omega)}\{\langle
t,(\mu(f_1),...,\mu(f_n))\rangle-Q_f^*(\mu)\}\] for some $x=(\mu_x(f_1),...,\mu_x(f_n))$ with
$\mu_x\in\mathcal{M}^{\tau}(\Omega)$ fulfilling $I_{f,(f_1,...,f_n)}(x)=Q_f^*(\mu_x)$, which gives the contradiction; consequently, we have  $I_{f,(f_1,...,f_n)}^*(t)\le L_{f,(f_1,...,f_n)}(t)$ for all $t\in\R^n$, which together with (\ref{equality-IS*-LS-eq20}) yields $I_{f,(f_1,...,f_n)}^*= L_{f,(f_1,...,f_n)}$; the conclusion follows from the foregoing equality together with Lemma \ref{convexity-If}.
\end{proof}

\begin{lemma}\label{existence-of-non-GD-element}
$C(\Omega)$ contains an element  admitting several equilibrium sates.
\end{lemma}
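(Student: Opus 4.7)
The plan is to invoke Theorem 3.4 of \cite{Israel-Phelps-84-MAGHSCAN-54}, formulated in the general setting of a compact metrizable Choquet simplex $K$ endowed with an affine upper semi-continuous function $h\colon K\to\R$. Its conclusion, when $K$ is not a singleton, is that there exists a continuous affine function $a$ on $K$ such that $a+h$ attains its supremum at more than one point of $K$. Under our standing hypotheses, the set $K=\mathcal{M}^{\tau}(\Omega)$ is a non-singleton metrizable Choquet simplex (\cf \S\ref{subsection-Thermodynamic formalism}) and $h=h^{\tau}$ is affine, finite, and upper semi-continuous, so the hypotheses of the Israel--Phelps result are met.

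Having obtained such a function $a$, I would translate it to the required statement in $C(\Omega)$ as follows. Every continuous affine function on the compact convex set $\mathcal{M}(\Omega)$ has the form $\mu\mapsto\mu(f)+c$ for some $f\in C(\Omega)$ and some constant $c\in\R$; in particular, there exist $f\in C(\Omega)$ and $c\in\R$ with $a(\mu)=\mu(f)+c$ for all $\mu\in\mathcal{M}^{\tau}(\Omega)$. Adding the constant $c$ shifts the value of the supremum but does not affect the argmax, so the map $\mu\mapsto\mu(f)+h^{\tau}(\mu)$ attains its supremum $P^{\tau}(f)$ (by the variational principle (\ref{subsection-Thermodynamic formalism-eq20})) at the same two distinct points where $a+h^{\tau}$ did. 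By definition, these two points are equilibrium states for $f$, which completes the proof.

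The single non-routine ingredient is the Israel--Phelps theorem itself, and this is the main obstacle of the argument. Intuitively, it encodes the following fact about the face structure of a non-trivial Choquet simplex $K$ with an affine USC function $h$: since $K$ has non-extreme points (\ie $\mathcal{M}^{\tau}(\Omega)\supsetneq\sE^{\tau}(\Omega)$, by the Krein--Milman theorem and the simplex structure), the supporting hyperplanes to the graph of $h$ cannot each touch it at a single extreme point; some of them must touch along a non-trivial face, whose elements realize multiple maximizers of $a+h$. Once this ingredient is taken as given, the passage from continuous affine functions on $\mathcal{M}^{\tau}(\Omega)$ to elements of $C(\Omega)$ modulo constants is elementary.
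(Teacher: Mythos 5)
Your proof is correct and follows essentially the same route as the paper: invoke Theorem 3.4 of \cite{Israel-Phelps-84-MAGHSCAN-54} applied to $K=\mathcal{M}^{\tau}(\Omega)$ and $h^{\tau}$, then pull the resulting affine function back to an element of $C(\Omega)$. The only cosmetic differences are that the paper states the Israel--Phelps conclusion as non-Gateaux-differentiability of $P_{-h^{\tau}}$ at some $a$ (which is equivalent, via the subgradient characterization in \S\ref{subsection-Convex analysis}, to your formulation of $a+h^{\tau}$ having several maximizers), and that the paper absorbs the additive constant by directly citing the surjectivity of $C(\Omega)\ni f\mapsto\widehat{f}\in A(\mathcal{M}^{\tau}(\Omega))$ (Proposition 2.1 of \opcit, recalled in Appendix A) rather than separating out $c$ explicitly as you do; since $\mu(f)+c=\mu(f+c\cdot 1)$ for probability measures, the two handlings coincide.
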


\begin{proof}
Since $\mathcal{M}^{\tau}(\Omega)$ is not a singleton,  Theorem 3.4 of \cite{Israel-Phelps-84-MAGHSCAN-54} ensures the existence of some $a\in A(\mathcal{M}^{\tau}(\Omega))$ where $P_{-h^\tau}$ is not Gateaux differentiable. Since the map $C(\Omega)\ni f\mapsto \widehat{f}\in A(\mathcal{M}^{\tau}(\Omega))$ is surjective,  there exists $f\in C(\Omega)$ such that $\widehat{f}=a$. By the equations  (\ref{remark-proof-Phelps-Israel-result-eq60}) in Appendix A   the function  $P^\tau$ is not Gateaux differentiable at $f$; equivalently, $f$ has several equilibrium sates (\cf \S \ref{subsection-Convex analysis}).
\end{proof}

 \begin{lemma}\label{Israel-Phelps-theorem-strenght}
   Let $W$ be a  $\sigma$-compact vector space dense in $C(\Omega)$. Assume that property $(i)$ of Theorem  \ref{comparison-theorem} holds. There exists a   $\sigma$-compact infinite dimensional 
       vector space $\widetilde{W}$ linearly independent from  $W$ such that $f+h$ has a  unique equilibrium state for all  $(f,h)\in W\times (\widetilde{W}\setminus\{0\})$.
   \end{lemma}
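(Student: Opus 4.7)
Let $U := \{f \in C(\Omega) : f \text{ admits a unique equilibrium state}\}$. Under hypothesis $(i)$, Theorem \ref{Israel-Phelps-theorem} of Appendix A yields that $U$ is a dense $G_\delta$ subset of $C(\Omega)$; write $U = \bigcap_m O_m$ with $(O_m)$ a decreasing sequence of open dense sets. My plan is to build $\widetilde{W}$ as the linear span of a sequence $(h_n)_{n \in \N}$ chosen recursively so that, at each stage, setting $W_{n-1} := W + \mathrm{span}\{h_1, \dots, h_{n-1}\}$ (a $\sigma$-compact, dense subspace of $C(\Omega)$), $h_n$ has norm $\|h_n\| \le 2^{-n}$, does not lie in $W_{n-1}$, and satisfies the key translation-and-rescaling property
\[
\forall\, g \in W_{n-1},\ \forall\, t \in \R \setminus \{0\},\qquad g + t h_n \in U. \qquad (\ast_n)
\]

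\textbf{From the recursion to the conclusion.} Assuming such a sequence exists, $\widetilde{W} := \bigcup_N \mathrm{span}\{h_1, \dots, h_N\}$ is $\sigma$-compact as a countable union of finite-dimensional subspaces of a Banach space; the requirements $h_n \notin W_{n-1}$ force both the linear independence of $(h_n)$ and $\widetilde{W} \cap W = \{0\}$, making $\widetilde{W}$ infinite-dimensional and linearly independent from $W$. Given $(f,h) \in W \times (\widetilde{W}\setminus\{0\})$, expand $h = \sum_{i=1}^k a_i h_{n_i}$ with $n_1 < \dots < n_k$ and $a_k \ne 0$, and write
\[
f + h = \bigl( f + \sum_{i < k} a_i h_{n_i} \bigr) + a_k h_{n_k};
\]
the first summand belongs to $W_{n_k - 1}$, so applying $(\ast_{n_k})$ with $t = a_k$ gives $f + h \in U$, which is the claim of the lemma.

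\textbf{The inductive step---main obstacle.} Write $W_{n-1} = \bigcup_p K_p$ with $K_p$ compact. Condition $(\ast_n)$ amounts to requiring $h_n \in \bigcap_{m, p} \sA_{m, p}$, where
\[
B_{m, p} := \{h \in C(\Omega) : K_p + h \subset O_m\},\qquad \sA_{m, p} := \{h \in C(\Omega) : \R^{*} h \subset B_{m, p}\}.
\]
Each $B_{m, p}$ is open in $C(\Omega)$ by compactness of $K_p$ and openness of $O_m$, but the ``punctured-line'' requirement built into $\sA_{m, p}$ is strictly stronger than residuality and is the main obstacle: an arbitrary open dense set need not contain a whole punctured line through any of its points. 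This is precisely where the variant-of-Israel--Phelps nature of the lemma intervenes. Using the Choquet simplex structure of $\mathcal{M}^{\tau}(\Omega)$ together with hypothesis $(i)$, one shows that for each compact $K \subset C(\Omega)$ and each open dense $O \subset C(\Omega)$ of the form appearing in the $G_\delta$ decomposition of $U$, the set $\{h \in C(\Omega) : K + \R^{*} h \subset O\}$ is dense in $C(\Omega)$; in particular each $\sA_{m, p}$ is then a dense $G_\delta$. Baire's theorem applied inside the ball $\{h : \|h\| \le 2^{-n}\}$ yields a dense $G_\delta$ of admissible $h_n$, and since the $\sigma$-compact subspace $W_{n-1}$ is meager in the infinite-dimensional Banach space $C(\Omega)$, one may further require $h_n \notin W_{n-1}$, completing the recursion.
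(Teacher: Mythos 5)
Your overall architecture is essentially the same as the paper's: iteratively choose $h_n$ so that $g + t h_n$ has a unique equilibrium state for all $g$ in the previously accumulated $\sigma$-compact space $W_{n-1}$ and all $t \ne 0$, then take $\widetilde{W} = \mathrm{span}\{h_n\}$. Your ``from the recursion to the conclusion'' paragraph is correct, and your device for forcing $h_n \notin W_{n-1}$ (a $\sigma$-compact subspace of the infinite-dimensional Banach space $C(\Omega)$ is meager, so a generic choice avoids it) is a legitimate alternative to the paper's trick of adjoining to $W$ a fixed function $f_0$ with several equilibrium states and then observing that $h_n \in W_{n-1}$ would contradict the non-differentiability of $P^\tau$ at $f_0$.

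The gap is the inductive step, and it is not a small one: the density of $\{h : K + \R^{*} h \subset O\}$ is exactly the hard content of the lemma, and you leave it at ``using the Choquet simplex structure together with hypothesis $(i)$, one shows that\ldots'' with no argument. Moreover you set this up suboptimally. From Theorem \ref{Israel-Phelps-theorem} you extract only that $U$ is dense $G_\delta$ (taking $n=1$ and $S$ trivial, plus Mazur for the $G_\delta$-ness), and then you try to reconstruct the punctured-line statement from that weaker fact, compact piece $K_p$ at a time. But Theorem \ref{Israel-Phelps-theorem} already hands you the punctured-line property at full strength: applied with $n=1$ and the $\sigma$-compact set $S = \{\widehat{g} : g \in W_{n-1}\} \subset A(\mathcal{M}^\tau(\Omega))$, it says that the set of directions $a$ for which $P_{-h^\tau}$ is Gateaux differentiable at $\widehat{g} + t a$ for every $g \in W_{n-1}$ and every $t \ne 0$ is a dense $G_\delta$ in $A(\mathcal{M}^\tau(\Omega))$. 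Pulling this back along the continuous linear surjection $C(\Omega) \ni h \mapsto \widehat{h} \in A(\mathcal{M}^\tau(\Omega))$ (which is open by the open mapping theorem) and using (\ref{remark-proof-Phelps-Israel-result-eq60}) to translate Gateaux differentiability of $P_{-h^\tau}$ at $\widehat{g + th}$ into uniqueness of equilibrium for $g + th$, you get exactly that $\{h : W_{n-1} + \R^{*} h \subset U\}$ is a dense $G_\delta$ in $C(\Omega)$, which is what your recursion needs. As written, you discard the strong part of the Israel--Phelps theorem and then ask the reader to accept an unproved re-derivation of it; this should be replaced by a direct application of Theorem \ref{Israel-Phelps-theorem} with the $\sigma$-compact $S$ above, which is also what the paper does.
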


\begin{proof}
 (We use the notations and results of Appendix A.)   
 By  Lemma \ref{existence-of-non-GD-element} there exists 
  $f_0\in C(\Omega)$ admitting several equilibrium states.   
   Put $W_0=W+\textnormal{span}(\{f_0\})$. 
For each $n\in\N$ let $\sP(n)$ denote the following property: There exists a  $n$-dimensional  vector space $W_n$ linearly independent from $W_0$ such that $f+h$ has a  unique equilibrium state for all  $(f,h)\in W_0\times (W_n \setminus\{0\})$.
Put $S=\{\widehat{f}:f\in W_0\}$ so  that $S$ is a $\sigma$-compact  vector space dense in  $A(\mathcal{M}^{\tau}(\Omega))$. 
Let $G_1$ be the subset of $A(\mathcal{M}^{\tau}(\Omega))$ as in  Theorem \ref{Israel-Phelps-theorem} of Appendix A for the above choice of $S$; we have    $G_1\supset \{\widehat{f}+t a_1: f\in W_0, t\in\R\setminus\{0\}\}$ for some $a_1\in A(\mathcal{M}^{\tau}(\Omega))$. Let $h_1\in C(\Omega)$ such that  $\widehat{h_1}=a_1$.
The equations (\ref{remark-proof-Phelps-Israel-result-eq60}) in Appendix A  shows that $P_{-h^\tau}$ is Gateaux differentiable at $\widehat{f}+t a_1$ if and only if 
$P^{\tau}$ is Gateaux differentiable at $f+t h_1$ for all $(f,t)\in W_0\times\R\setminus\{0\}$.
Note that $h_1\not\in W_0$  (because $P^\tau$ is  not Gateaux differentiable at  $f_0$). 
Therefore, $\sP(1)$   holds    by putting $W_1=\textnormal{span}(\{h_1\})$.  Assume that $\sP(n)$
holds  for   $n\in\N$. 
Put $W'=W_0\oplus W_n$.  Since    $W'$ is a $\sigma$-compact  vector space dense in $C(\Omega)$ and containing $f_0$,  the conclusion  of the preceding case  with  $W'$ in place of $W_0$ and $n=1$
  ensures the existence of  some $h_{n+1}\in C(\Omega)\setminus W'$ such that 
   that  $f+t h_{n+1}$ has a unique equilibrium state for all $(f,t)\in W'\times\R\setminus\{0\}$. Therefore, 
  $\sP(n+1)$ holds by putting $W_{n+1}=W_n+\textnormal{span}(\{h_{n+1}\})$. Consequently, $\sP(n)$ holds for all  $n\in\N$ and 
   the conclusion follows  by putting $\widetilde{W}=\textnormal{span}(\bigcup_{n\in\N}W_n)$. 
  \end{proof}

 \begin{lemma}\label{Israel-Phelps-theorem-converse-affine-strenght}
If  property $(i)$ of Theorem  \ref{comparison-theorem} does not holds, then there exists an integer  $m\ge 2$, $\varepsilon>0$  and 
  a nonempty  open subset $G$ of   ${C(\Omega)}^m$  such that  for each 
  $(g_1,...,g_m)\in G$ and for each $g\in C(\Omega)\setminus\textnormal{span}\{g_1,...,g_m\}$  
  with $\mid\mid g\mid\mid<\varepsilon$ the function $P^{\tau}$  is not Gateaux differentiable  at  $g+\sum_{k=1}^m t_k g_k$ for some  $(t_1,..., t_m)\in\R^m$. 
 \end{lemma}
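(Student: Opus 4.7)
The plan is to derive this lemma as the ``dual'' of Lemma \ref{Israel-Phelps-theorem-strenght}, by running its strategy in reverse: invoke Theorem \ref{Israel-Phelps-theorem-converse-affine} of Appendix A (the Israel–Phelps converse result for the failure of the density property) on the space $A(\mathcal{M}^{\tau}(\Omega))$ of continuous affine functions on the Choquet simplex $\mathcal{M}^{\tau}(\Omega)$, and then transport the conclusion back to $C(\Omega)$ through the canonical surjection $\Phi\colon C(\Omega)\to A(\mathcal{M}^{\tau}(\Omega))$, $\Phi(f)=\widehat{f}$.

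First, assume that $(i)$ of Theorem \ref{comparison-theorem} does not hold. Since the density property $(i)$ is precisely the condition required in the statement of Theorem \ref{Israel-Phelps-theorem-converse-affine} (applied with the affine upper semi-continuous function $-h^{\tau}$ on the metrizable Choquet simplex $\mathcal{M}^{\tau}(\Omega)$), its failure yields an integer $m\ge 2$, a real $\varepsilon>0$, and a nonempty open subset $G'$ of $A(\mathcal{M}^{\tau}(\Omega))^{m}$ such that for every $(a_{1},\dots,a_{m})\in G'$ and every $a\in A(\mathcal{M}^{\tau}(\Omega))\setminus\textnormal{span}\{a_{1},\dots,a_{m}\}$ with $\|a\|<\varepsilon$, the function $P_{-h^{\tau}}$ is not Gateaux differentiable at $a+\sum_{k=1}^{m}t_{k}a_{k}$ for some $(t_{1},\dots,t_{m})\in\R^{m}$.

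Next, I pull this back via the componentwise map $\Phi^{m}\colon C(\Omega)^{m}\to A(\mathcal{M}^{\tau}(\Omega))^{m}$, which is linear, continuous, surjective and norm-decreasing. Setting $G=(\Phi^{m})^{-1}(G')$ gives a nonempty open subset of $C(\Omega)^{m}$. For any $(g_{1},\dots,g_{m})\in G$ and any $g\in C(\Omega)\setminus\textnormal{span}\{g_{1},\dots,g_{m}\}$ with $\|g\|<\varepsilon$, I apply Theorem \ref{Israel-Phelps-theorem-converse-affine} to the tuple $(\widehat{g_{1}},\dots,\widehat{g_{m}})\in G'$ and the shift $\widehat{g}$: after possibly shrinking $G$ and $\varepsilon$, this produces $(t_{1},\dots,t_{m})\in\R^{m}$ such that $P_{-h^{\tau}}$ is not Gateaux differentiable at $\widehat{g}+\sum_{k=1}^{m}t_{k}\widehat{g_{k}}=\widehat{g+\sum_{k=1}^{m}t_{k}g_{k}}$. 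The equations (\ref{remark-proof-Phelps-Israel-result-eq60}) in Appendix A then translate this into the failure of Gateaux differentiability of $P^{\tau}$ at the point $g+\sum_{k=1}^{m}t_{k}g_{k}$, which is the desired conclusion.

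The main obstacle is the treatment of the linear independence condition, since $\Phi$ is not injective: the hypothesis $g\notin\textnormal{span}\{g_{1},\dots,g_{m}\}$ in $C(\Omega)$ is \emph{weaker} than the corresponding condition $\widehat{g}\notin\textnormal{span}\{\widehat{g_{1}},\dots,\widehat{g_{m}}\}$ needed to apply Theorem \ref{Israel-Phelps-theorem-converse-affine}. To handle this, I would exploit the openness of $G'$: if $\widehat{g}$ happens to be a linear combination of the $\widehat{g_{k}}$ the assertion for the original tuple still applies after an arbitrarily small perturbation of $(g_{1},\dots,g_{m})$ within $G$, and since the conclusion of Theorem \ref{Israel-Phelps-theorem-converse-affine} holds on the whole open set $G'$ (not at an isolated flat), a standard perturbation/approximation argument together with the upper semi-continuity of the set of non-differentiability points inherited from the lower semi-continuity of the subdifferential will close the gap. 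If necessary, one slightly increases $m$ to absorb the extra direction $\widehat{g}$, which does not alter the statement since any $m\ge 2$ works.
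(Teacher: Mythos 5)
Your overall plan (invoke the Israel--Phelps converse and pull back along $g\mapsto\widehat{g}$) matches the paper's, but the input you attribute to Theorem \ref{Israel-Phelps-theorem-converse-affine} is not what that theorem says. As stated in Appendix A, it only produces some $n$ such that the set of $(a_1,\dots,a_n)\in A(\mathcal{M}^\tau(\Omega))^n$ for which $P_{-h^\tau}$ fails to be Gateaux differentiable at some $\sum_{k=1}^n t_k a_k$ with $(t_1,\dots,t_n)\neq 0$ has nonempty interior; it does not supply the number $\varepsilon$ nor the splitting into an $(n-1)$-tuple plus a small free perturbation $a\notin\textnormal{span}\{a_1,\dots,a_{n-1}\}$ that your first paragraph requires. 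The paper obtains that refined form by inspecting the \emph{proof} of Theorem \ref{Israel-Phelps-theorem-converse-affine} (``The proof \dots reveals that $n\ge 3$ and there is a nonempty open subset $U_{n-1}$ \dots and $\varepsilon>0$ \dots''), and this refinement is the indispensable ingredient. Without it, your opening step is unjustified.

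Your last paragraph, meant to cope with the non-injectivity of $g\mapsto\widehat{g}$, does not hold up. Perturbing $(g_1,\dots,g_m)$ inside $G$ yields a conclusion only for the perturbed tuple, not the one you were handed; the set of Gateaux non-differentiability points of a continuous convex function is merely an $F_\sigma$ and need not be closed, so there is no ``upper semi-continuity of the set of non-differentiability points'' that lets you transport a failure of differentiability along a limit of tuples; and ``slightly increasing $m$'' clashes with the quantifier order in the lemma, since $m$, $\varepsilon$ and $G$ must be fixed before the perturbation $g$ is given. The paper's proof is a straight pullback --- it sets $m=n-1$ and takes $G$ to be the preimage of $U_m$ under the componentwise map $(g_1,\dots,g_m)\mapsto(\widehat{g_1},\dots,\widehat{g_m})$, translating via the identity $P^{\tau}=P_{-h^\tau}\circ\,\widehat{\cdot}$ of \textnormal{(\ref{remark-proof-Phelps-Israel-result-eq60})} --- and introduces no secondary approximation step of the kind you describe.
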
 
 
\begin{proof}
(We use the notations and results of Appendix A.) 
Let $n$ be an integer as in Theorem \ref  {Israel-Phelps-theorem-converse-affine} of Appendix A and    put  $A_\varepsilon(\mathcal{M}^{\tau}(\Omega))=\{a\in A(\mathcal{M}^{\tau}(\Omega)):\mid\mid a\mid\mid<\varepsilon\}$ for all $\varepsilon>0$.
 The proof of  Theorem \ref  {Israel-Phelps-theorem-converse-affine} of Appendix A reveals that 
$n\ge 3$ and there  is a nonempty  open subset $U_{n-1}$ of   ${A(\mathcal{M}^{\tau}(\Omega))}^{n-1}$ and $\varepsilon>0$ such that   for each 
$((a_1,...,a_{n-1}),a)\in U_{n-1}\times A_\varepsilon(\mathcal{M}^{\tau}(\Omega))\setminus\textnormal{span}(\{a_1,...,a_{n-1}\})$
 the function $P_{-h^\tau}$   is not Gateaux differentiable  at  $a+\sum_{k=1}^{n-1} t_k a_k$ for some  $(t_1,...,t_{n-1})\in\R^{n-1}$. 
  Note that when $A(\mathcal{M}^{\tau}(\Omega))=\textnormal{span}(\{a_1,...,a_{n-1}\})$, the above  assertion  holds trivially by emptiness of the premise (because $U_{n-1}\times A_\varepsilon(\mathcal{M}^{\tau}(\Omega))\setminus\textnormal{span}\{a_1,...,a_{n-1}\})=\emptyset$).
     Taking account of  the equations (\ref{remark-proof-Phelps-Israel-result-eq60}) in Appendix A together with  the continuity  and linearity of the map  $C(\Omega)\ni g\mapsto \widehat{g}$, 
the conclusion follows by putting $m=n-1$ and $G=\{g\in C(\Omega):\widehat{g}\in U_{m}\}$.
 \end{proof}

\begin{lemma}\label{condition on the h_n}
Let $W$ and  $\widetilde{W}$  be  linearly independent infinite dimensional  vector subpaces of $C(\Omega)$,   let $(f_n)$ (resp. $(h_n)$) be 
  a sequence in $W$ (resp. $\widetilde{W}$) and let us consider the following inclusion:
  \begin{equation}\label{condition on the h_n-eq10}
  \textnormal{span}(\{f_n+h_n:n\in\N\})\setminus\{0\}\subset W+(\widetilde{W}\setminus\{0\}).
  \end{equation}
If the set $\{h_n:n\in\N\}$ is linearly independent, then   (\ref{condition on the h_n-eq10}) holds. If the set $\{f_n:n\in\N\}$ is linearly independent and  (\ref{condition on the h_n-eq10}) holds, then the set $\{h_n:n\in\N\}$ is linearly independent.
\end{lemma}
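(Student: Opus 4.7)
The plan is to exploit the direct sum structure $W \oplus \widetilde{W}$ provided by the linear independence of $W$ and $\widetilde{W}$; this gives uniqueness of decompositions $w+\tilde w$ with $w\in W$ and $\tilde w\in\widetilde W$, which is the key technical tool for both assertions. I would set up the notation once and for all: an arbitrary element of $\textnormal{span}(\{f_n+h_n:n\in\N\})$ can be written as a finite sum $g=\sum_{k=1}^N\lambda_k(f_k+h_k)=\bigl(\sum_{k=1}^N\lambda_k f_k\bigr)+\bigl(\sum_{k=1}^N\lambda_k h_k\bigr)$, and this splitting places the two summands in $W$ and $\widetilde W$ respectively.

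For the first assertion, I would assume $\{h_n:n\in\N\}$ is linearly independent and take $g$ nonzero as above. The goal is to show $\sum_{k=1}^N\lambda_k h_k\neq 0$, so that $g\in W+(\widetilde W\setminus\{0\})$. If that sum were zero, linear independence of $\{h_n:n\in\N\}$ would force every $\lambda_k$ to vanish, whence $g=0$, contradicting the choice of $g$.

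For the second assertion, I would argue by contrapositive: suppose $\{h_n:n\in\N\}$ is linearly dependent, so there is a nontrivial relation $\sum_{k=1}^N\lambda_k h_k=0$ with not all $\lambda_k$ zero. Then $g=\sum_{k=1}^N\lambda_k(f_k+h_k)$ reduces to $\sum_{k=1}^N\lambda_k f_k$, which is nonzero because $\{f_n:n\in\N\}$ is linearly independent. Hence $g$ belongs to $\textnormal{span}(\{f_n+h_n:n\in\N\})\setminus\{0\}$, and the inclusion \eqref{condition on the h_n-eq10} produces a decomposition $g=w+\tilde w$ with $w\in W$ and $\tilde w\in\widetilde W\setminus\{0\}$. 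On the other hand $g=g+0$ with $g\in W$ and $0\in\widetilde W$, so uniqueness of the decomposition in $W\oplus\widetilde W$ (from $W\cap\widetilde W=\{0\}$) forces $\tilde w=0$, contradicting $\tilde w\neq 0$.

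I do not foresee a serious obstacle: the proof is a direct book-keeping argument using only the fact that the sum $W+\widetilde W$ is direct. The only point requiring a little care is to remember to invoke linear independence of the appropriate set ($\{h_n\}$ in one direction, $\{f_n\}$ in the other) to pass from ``the coefficients are not all zero'' to ``the relevant linear combination is nonzero.''
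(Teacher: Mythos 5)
Your proof is correct and follows essentially the same argument as the paper: both directions rest on the uniqueness of the decomposition in $W\oplus\widetilde{W}$, using linear independence of $\{h_n\}$ (first assertion) or of $\{f_n\}$ (second assertion) to pass between vanishing coefficients and a vanishing linear combination. The only cosmetic difference is that the paper indexes its finite linear combinations over an arbitrary finite subset $\{n_1,\dots,n_N\}\subset\N$, whereas you use $\{1,\dots,N\}$, but this is harmless after re-indexing.
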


\begin{proof}
  Suppose  that the set $\{h_n:n\in\N\}$ is linearly independent and   (\ref{condition on the h_n-eq10}) does not hold. 
Since  $\textnormal{span}(\{f_n+h_n:n\in\N\})\subset W+\widetilde{W}$ there exists $N\in\N$,  $\{c_1,...,c_N\}\subset\R$ and $g\in W$ such that 
\begin{equation}\label{condition on the h_n-eq20}
0\neq\sum_{k=1}^N c_k(f_{n_k}+h_{n_k})=g
\end{equation}
hence 
\[\sum_{k=1}^N c_k h_{n_k}=g-\sum_{k=1}^N c_k f_{n_k}\in W.\]
Since $W\cap\widetilde{W}=\{0\}$,  the above expression implies $\sum_{k=1}^N c_k h_{n_k}=0$ hence $c_k=0$ for all $k\in\{1,...,N\}$ by  linearly independence of  $\{h_n:n\in\N\}$,  which contradicts (\ref{condition on the h_n-eq20}).

Conversely, suppose  that set $\{f_n:n\in\N\}$ is linearly independent,   (\ref{condition on the h_n-eq10}) holds and 
the set $\{h_n:n\in\N\}$  is not   linearly  independent.  Let $\{h_{n_1},...,h_{n_N}\}$ be a linearly dependent subset of $\{h_n:n\in\N\}$. There are  real numbers $c_1,...,c_N$ with 
$c_{k_0}\neq 0$ for some $k_0\in\{1,...,N\}$ such that $\sum_{k=1}^N c_k h_{n_k}=0$. The linear independence of $\{f_n:n\in\N\}$ implies
 $\sum_{k=1}^N c_k f_{n_k}\neq 0$
  hence 
\[
0\neq\sum_{k=1}^N c_k (f_{n_k}+h_{n_k})=\sum_{k=1}^N c_k f_{n_k},
\]
which together with 
   (\ref{condition on the h_n-eq10}) yields
\[
\sum_{k=1}^N c_k f_{n_k}\in W\cap(W+(\widetilde{W}\setminus\{0\})).
\]
The contradiction follows  from the above expression  since 
the hypothesis  $W\cap\widetilde{W}=\{0\}$ implies $W\cap (W+\widetilde{W}\setminus\{0\})=\emptyset$.
\end{proof}

\begin{proof-theo}\label{proof-theo}
   Let $(v_1)$  (resp. $(vi_2)$, $(vii_3)$, $(iv_4)$) denote the statement obtained from $(v)$   (resp. $(vi)$,  $(vii)$, $(iv)$) with the changes described in   $b)1)$ (resp.   $b)2)$, $b)3)$, $b)4)$).

 \medskip
 
 $\bullet$ Proof of $(iv_4)\Rightarrow(iv)$, $(v_1)\Rightarrow(v)$, $(vi_2)\Rightarrow(vi)$ and $(vii_3)\Rightarrow(vii)$, with the same space $V$ in the premise as in the conclusion regarding  the last two  implications: The first  implication is obvious; all the others as well as the clarification on $V$  follow from Lemma \ref{existence-net-generating the pressure} and the observation before Lemma \ref{uniqueness-equil-implies-diffe-logmoment}.

\medskip

 $\bullet$ Proof of $(i)\Rightarrow(ii)$:   Assume that $(i)$ holds.   Let $\mu\in\mathcal{M}^{\tau}(\Omega)$. 
For each $\varepsilon>0$ there is a sequence $(\mu_{\varepsilon,n})$ in ${\sE}^\tau(\Omega)$ converging to $\mu$ and fulfilling   $h^\tau(\mu_{\varepsilon,n})>h^\tau(\mu)-\varepsilon$ for all $n\in\N$. 
Let us consider the  product set   $\wp=\ ]0,+\infty[\times\N^{]0,+\infty[}$  directed as before Theorem \ref{comparison-theorem} (with $l=1$),    let  $(\mu_i)_{i\in\wp}$ be the net defined by putting $\mu_i=\mu_{\varepsilon,u(i)}$ for all $i=(\varepsilon,u)\in\wp$ and let 
$s$ be the function  defined on $]0,+\infty[\times\N$ by 
\[\forall (\varepsilon,n)\in\ ]0,+\infty[\times\N,\ \ \ \ \ \ \ s(\varepsilon,n)=h^\tau(\mu_{\varepsilon,n}).\]
We have 
\[\forall\varepsilon>0,\ \ \ \ \ \ \ h^\tau(\mu)\ge\limsup_n h^\tau(\mu_{\varepsilon,n})\ge\liminf_n h^\tau(\mu_{\varepsilon,n})\ge h^\tau(\mu)-\varepsilon\] and letting $\varepsilon\rightarrow 0$ yields
\[h^\tau(\mu)=\limsup_\varepsilon\limsup_n h^\tau(\mu_{\varepsilon,n})=\liminf_\varepsilon\liminf_n h^\tau(\mu_{\varepsilon,n})
\]
and
\[\lim_\varepsilon\lim_n \mu_{\varepsilon,n}=\mu.\]
The last  above expression shows that
$\lim_i \mu_i=\mu$ (\cite{Kelley-91}, Theorem on Iterated Limits, p. 69)
and the first one  together with  Lemma \ref{lemma-conv-net}  (applied with  $J=\ ]0,+\infty[$ and  $L=\N$) 
yields  $\lim_i h^\tau(\mu_i)=h^\tau(\mu)$, which proves $(ii)$.

\medskip

 $\bullet$ Proof of $(i)\Rightarrow(iv_4)$:   Assume that $(i)$ holds.  Let $\widetilde{W}$ be  as in Lemma \ref{Israel-Phelps-theorem-strenght}. Let  $(f_n)$ be a Schauder basis of $C(\Omega)$ included in $W$ (\cf \S \ref{subsection-Thermodynamic formalism}) and let   $\{h_n:n\in\N\}$ be a linearly independent subset 
 of    $\widetilde{W}$ 
fulfilling 
\[\sum_{n=1}^{+\infty}\left(\sup_{f\in C(\Omega), \mid\mid f\mid\mid\le 1}\mid \lambda_n(f)\mid\right)\mid\mid h_n\mid\mid<1\]
(with $\lambda_n(f)$ as in \S \ref{subsection-Thermodynamic formalism}).  
The sequence $(f_n+h_n)$ is a Schauder basis of $C(\Omega)$ (\cf \S \ref{subsection-Thermodynamic formalism})  hence 
$\textnormal{span}(\{f_n + h_n:n\in\N\})$ 
is dense in $C(\Omega)$; clearly, $\textnormal{span}(\{f_n + h_n:n\in\N\})$ is $\sigma$-compact; since $W\cap\widetilde{W}=\{0\}$ and $\widetilde{W}$ is infinite dimensional, $\textnormal{span}(\{f_n + h_n:n\in\N\})$ is infinite dimensional and linearly independent from $W$;  furthermore, we have 
  \[\textnormal{span}(\{f_n+h_n:n\in\N\})\setminus\{0\}\subset W+(\widetilde{W}\setminus\{0\})
  \]
  by Lemma \ref{condition on the h_n}; therefore, $(iv_4)$ holds with $V=\textnormal{span}(\{f_n + h_n:n\in\N\})$.
   
\medskip

$\bullet$ Proof of $(ii)\Rightarrow(i)$:
 Assume that $(ii)$ holds. Let $r\in\R$. If $r\ge\sup_{\mathcal{M}^{\tau}(\Omega)} h^\tau$ then $\{\mu\in\mathcal{M}^{\tau}(\Omega): h^\tau(\mu)>r\}=\emptyset$ and $(i)$ holds trivially. Assume that $r<\sup_{\mathcal{M}^{\tau}(\Omega)}h^\tau$ and let $\mu\in\mathcal{M}^{\tau}(\Omega)$ fulfilling $h^\tau(\mu)>r$. The condition 
  $(ii)$ applied to the point $(\mu,h^\tau(\mu))$ of the graph  of $h^\tau$ ensures the existence of a net     $(\mu_i)$  in  $\sE^{\tau}(\Omega)$  converging to $\mu$ and such that   $\lim h^{\tau}(\mu_i)=h^{\tau}(\mu)$; thus,  $h^{\tau}(\mu_i)>r$ eventually, and $(i)$ holds.

\medskip

 $\bullet$ Proof of $(ii)\Rightarrow(v_1)$ with the rate function given in $a) 2)$:
  Assume that $(ii)$ holds. Let $(\nu_\alpha, t_\alpha)$ be a net where  $\nu_\alpha$ is a 
Borel probability measure on $\mathcal{M}(\Omega)$, $t_\alpha>0$ and  $(t_\alpha)$  converges  to zero.
  Assume that
    \[\forall  g\in C(\Omega),\ \ \ \ \ \ \lim_\alpha t_\alpha\log\int_{\mathcal{M}(\Omega)}e^{t_\alpha^{-1}\int_\Omega g(\omega)\mu(d\omega)}\nu_\alpha(d\mu)=Q_{f_0}(g).
    \]
Taking account of Lemma \ref{ergodic-equal-unique-equilibrium}, the condition $(ii)$ together with the above equality shows that  the hypotheses of  Theorem 5.2(b) of \cite{Comman(2009)NON22} hold;  therefore,   the net  $(\nu_{\alpha})$ 
  satisfies a large deviation principle in $\mathcal{M}(\Omega)$ with powers $(t_\alpha)$ and   rate function ${Q_{f_0}^*}_{\mid \mathcal{M}(\Omega)}$. By Lemma \ref{Fenchel-Legendre-transform-of-Q_f},  the condition $(ii)$ is equivalent to the density of   the graph of   ${Q_{f_0}^*}_{\mid \sE^\tau(\Omega)}$   in the graph of  ${Q_{f_0}^*}_{\mid \mathcal{M}^\tau(\Omega)}$, which proves    $(v_1)$.

\medskip

  $\bullet$ Proof of $(v)\Rightarrow(ii)$: Assume that $(v)$ holds.
Since $I$ is convex and for each $g\in C(\Omega)$,  $Q_{f_0}(g)$ coincides with  the value at  $\widehat{g}$ of the  limiting   log-moment generating function associated with $(\nu^\tau_{f_0,\alpha},t^\tau_\alpha)$  by  Lemma \ref{existence-net-generating the pressure}, we have $I={Q_{f_0}^*}_{\mid \mathcal{M}(\Omega)}$ 
    (\cf \S \ref{subsection-Large deviations}) so that $(ii)$ follows from Lemma \ref{Fenchel-Legendre-transform-of-Q_f}.

\medskip

$\bullet$ The second equality in $a)2)$ follows from Lemma  \ref{equality-IS-LS*}.

\medskip

      We have proved 
          \begin{equation}\label{proof-theo-eq20}
  (i)\Leftrightarrow(ii)\Leftrightarrow(v)\Leftrightarrow(v_1)\Rightarrow(iv_4)\Rightarrow(iv)  \textnormal{\ \ \ and property $a) 2)$ concerning $v_1)$.}
\end{equation}
  
\medskip

$\bullet$ Proof of $(iv)\Rightarrow(iii)$ with $D_n$ as in $a) 3)$ for all $n\in\N$: Assume that   $(iv)$ holds.  Let $V$   as in $(iv)$, 
   let  $(f,(g_1,...,g_n),n)\in \Sigma\times V^n\times\N$ and let $\varepsilon>0$.   Since $V$ is  infinite dimensional and  dense in $C(\Omega)$
   there exists  $g_{n+1}\in V\setminus\textnormal{span}\{f,g_1,...,g_n\}$
  such that 
  $\mid\mid f+g_{n+1}\mid\mid<\varepsilon$. Therefore,  $f+g_{n+1}\not\in\textnormal{span}\{g_1,...,g_n\}$ and
    $f+g_{n+1}+\sum_{k=1}^n t_k g_k$  has a unique equilibrium state for all $(t_1,...,t_n)\in\R^n$, which proves     $(iii)$ by putting
     $D_n=V^n$ and $g=f+g_{n+1}$.
     
     \medskip
      
$\bullet$ Proof of $(iii)\Rightarrow(i)$: Assume that   $(iii)$ holds. Let   $n\in\N$ and let $\varepsilon>0$.    The  set  of  all $(g_1,..., g_n)\in C(\Omega)^n$ such that 
 for each $g\in C(\Omega)\setminus\textnormal{span}\{g_1,...,g_n\}$  with $\mid\mid g\mid\mid<\varepsilon$ the function
$g+\sum_{k=1}^n t_k g_k$  has several equilibrium states for  some $(t_1,...,t_n)\in\R^n$,  has empty interior;  since    $g+\sum_{k=1}^n t_k g_k$  has several equilibrium states  if and only if $P_{-h^\tau}$ is not  Gateaux differentiable at $\widehat{g}+\sum_{k=1}^n t_k \widehat{g_k}$ (by (\ref{remark-proof-Phelps-Israel-result-eq60}) in Appendix A),  $(i)$  follows from 
Lemma  \ref{Israel-Phelps-theorem-converse-affine-strenght}.

\medskip

The last two above implications  together with (\ref{proof-theo-eq20}) yield 
\begin{equation}\label{proof-theo-eq30}
(i)\Leftrightarrow(ii)\Leftrightarrow(iii)\Leftrightarrow(iv_4)\Leftrightarrow(iv)\Leftrightarrow(v)\Leftrightarrow(v_1)  \textnormal{\ \ \ and property $a) 3)$ concerning $iv)$ and $iv_4)$}. \end{equation}

\medskip

  $\bullet$ Proof of $(iv)\Rightarrow(vi_2)$ with the rate function given in $a) 2)$: Assume that $(iv)$ holds. Let $V$ as in $(iv)$.  Since $(iv)\Leftrightarrow(v_1)$ by  
  (\ref{proof-theo-eq30}),  and since $f_0$ in $(v_1)$ is an arbitrary element of $C(\Omega)$,  $(v_1)$ holds in particular with    $f_0=f+g$ for all $(f,g)\in\Sigma\times V$; then,  $(vi_2)$ with $V$ follows from the last assertion of  Lemma \ref{Fenchel-Legendre-transform-of-Q_f}.

\medskip

  $\bullet$ Proof of $(vi)\Rightarrow(iv)$: Assume that $(vi)$ holds. Let $V$ as in $(vi)$. The same argument as in the   proof of $(v)\Rightarrow(ii)$ shows that the rate function associated with  $(\nu^\tau_{f+g,\alpha},t^\tau_\alpha)$ is ${Q_{f+g}^*}_{\mid \mathcal{M}(\Omega)}$ so that $(iv)$ with $V$ follows from the last assertion of  Lemma \ref{Fenchel-Legendre-transform-of-Q_f}.
  
  \medskip
  
  We have proved 
  \begin{equation}\label{proof-theo-eq25}
  (iv)\Leftrightarrow(vi)\Leftrightarrow(vi_2)  \textnormal{\ \ \ and property $a) 2)$ concerning $vi_2)$.}
  \end{equation}

\medskip

  $\bullet$ Proof of $(iv)\Rightarrow(vii_3)$ with the rate function given in $a) 2)$:  Assume that $(iv)$ holds. Let $V$ as in $(iv)$.  
 Since $(i)\Leftrightarrow(iv)$ by  
  (\ref{proof-theo-eq30}), and since $(i)$ does not depend on $\Sigma$, one can assume that $\Sigma=W$.
  Let  $(f,g,(f_1,...,f_n),n)\in \Sigma\times V\setminus\{0\}\times E^n\times\N$ and let $(\mu_\alpha,t_\alpha)$ be a net as in $b)3)$.
  By Lemma \ref{uniqueness-equil-implies-diffe-logmoment} (applied with $f+g$ in place of $f$)   the function $L_{f+g,(f_1,...,f_n)}$ is differentiable on $\R^n$ for all $((f_1,...,f_n),n)\in E^n\times\N$ and consequently 
  its Legendre-Fenchel transform $L_{f+g,(f_1,...,f_n)}^*$ is essentially strictly convex (\cf \S \ref{subsection-Convex analysis}).  By  G\"{a}rtner's  theorem (\cf \S \ref{subsection-Large deviations}) the net   $(\mu_\alpha)$ satisfies a large deviation principle in $\R^n$ with powers $(t_\alpha)$ and rate function $L_{f+g,(f_1,...,f_n)}^*$, which  proves $(vii_3)$ with $V$.

\medskip

  $\bullet$ Proof of $(vii)\Rightarrow(viii)$: Assume that $(vii)$ holds. Let $V$ be as in $(vii)$ and let   $(f,g,(f_1,...,f_n), n)\in \Sigma\times V\setminus\{0\}\times E^n\times\N$.  Lemma \ref{existence-net-generating the pressure} implies that $L_{f+g,(f_1,...,f_n)}$ is the limiting  log-moment generating function associated with $((\widehat{f_1},...,\widehat{f_n})[\nu^\tau_{f+g,\alpha}], t^\tau_\alpha)$.  
    Since the rate function governing the large deviation principle  of the net  $((\widehat{f_1},...,\widehat{f_n})[\nu^\tau_{f+g,\alpha}])$ is convex, it must be the Legendre-Fenchel transform $L_{f+g,(f_1,...,f_n)}^*$ of $L_{f+g,(f_1,...,f_n)}$ (\cf \S \ref{subsection-Large deviations})  hence   $(viii)$ holds  with $V$.

\medskip

  $\bullet$ Proof of $(viii)\Rightarrow(ix)$ with the same space $V$ in the premise as in the conclusion: This  follows  from  Lemma \ref{equality-IS-LS*}.

\medskip

  $\bullet$ Proof of $(ix)\Rightarrow(iv)$: Assume that $(ix)$ holds. Let $V$ be  as in $(ix)$. For each   $(f,g,(f_1,...,f_n), n)\in \Sigma\times V\setminus\{0\}\times E^n\times\N$   
 the function   $L_{f+g,(f_1,...,f_n)}^*$ is essentially strictly convex by  Lemma \ref{equality-IS-LS*}  
 hence 
 $L_{f+g,(f_1,...,f_n)}$ is differentiable on  $\R^n$ (\cf \S \ref{subsection-Convex analysis})  and $(iv)$ with $V$ follows from   Lemma \ref{diffe-logmoment-implies-uniqueness-equil}.
 
 \medskip
 
  We have proved
\begin{equation}\label{proof-theo-eq60}
 (iv)\Leftrightarrow(vii_3)\Leftrightarrow(vii)\Leftrightarrow(viii)\Leftrightarrow(ix)  \textnormal{\ \ \ and property $a) 2)$ concerning $vii_3)$.}
 \end{equation}
 
 \medskip

\noindent  The above proof reveals that the  rate function in $(v_1)$, $(vi_2)$,  $(vii_3)$  is given by the Legendre-Fenchel transform of the limiting log-moment generating function associated with the net concerned;  since this  limiting log-moment generating function  is the same in $(v_1)$,  $(vi_2)$,  $(vii_3)$  as in $(v)$, $(vi)$,  $(vii)$,   respectively 
(by Lemma \ref{existence-net-generating the pressure} and the observation before Lemma \ref{uniqueness-equil-implies-diffe-logmoment}) it follows that  property $a) 2)$ concerning $v)$, $vi)$, $vii)$ hold.

Taking into account the foregoing observation and putting together  (\ref{proof-theo-eq30}),   (\ref{proof-theo-eq25}), (\ref{proof-theo-eq60})  proves part $b)$,   property $a) 2)$ and  all the equivalences of part $a)$ as well as   property  $a) 3)$ concerning $iv)$;  furthermore, the above proof  shows that the same vector space $V$ may be used in each of the condition appearing  in  (\ref{proof-theo-eq25}) (resp.   (\ref{proof-theo-eq60})), which gives  property $a) 1)$  and  property   $a) 3)$ in full. The proof of part $a)$ is complete.

 The first assertion of part $c)$ follows  noting that  the above proof  works verbatim replacing  $V\setminus\{0\}$ by $V$ when each element of $\Sigma$ has a unique equilibrium state. 
  
 Let $(iv_0)$, $(vi_0)$, $(vi_{2,0})$, $(vii_0)$, $(vii_{3,0})$,  $(viii_0)$ and $(ix_0)$ denote the conditions obtained respectively from $(iv)$, $(vi)$,  $(vi_2)$,  $(vii)$,  $(vii_3)$, $(viii)$ and $(ix)$  replacing $V\setminus\{0\}$ by $V$. The equivalences \[(iv_0)\Leftrightarrow(vi_0)\Leftrightarrow(vi_{2,0})\Leftrightarrow(vii_0)\Leftrightarrow(viii_{3,0})\Leftrightarrow(viii_0)\Leftrightarrow(ix_0)\] with the same space $V$ in all the above conditions follows 
  replacing $V\setminus\{0\}$ by $V$ in the proof of  (\ref{proof-theo-eq25})  and (\ref{proof-theo-eq60}); this proves  the last  assertion of part $c)$. 
 \end{proof-theo}

  \begin{proof-theo-2}\label{proof-theo-2}
  We can assume that $\Sigma=E=W$ (\cf Remark \ref{remark-(D)-independence of Sigma}). Let $\cN$ be a nonempty subset of $\N\cup\{0\}$. 
  
  First assume that $\cN=\N\cup\{0\}$.  For each $n\in\N\cup\{0\}$ let  $\sP(n)$ denotes the following property: There exists a finite sequence $V_0,...,V_n$ of mutually linearly independent  $\sigma$-compact  infinite dimensional   vector subpaces of $C(\Omega)$  such that $\bigoplus_{j=0}^n V_j$   fulfils   the  conditions $(iv)$,  $(vi)$, $(vii)$, $(viii)$, $(ix)$ of Theorem  \ref{comparison-theorem}.
  We know that $\sP(0)$  holds  by property 1)  in part a) of  Theorem \ref{comparison-theorem}; note that in particular (D) holds. Let $n\in\N\cup\{0\}$ and  assume that $\sP(n)$ holds. Put $W_n=W\oplus\bigoplus_{j=0}^{n} V_n$;  note that $W_n$ is a $\sigma$-compact vector space dense    in $C(\Omega)$ and thus it satisfies the hypotheses  of  Theorem  \ref{comparison-theorem}.  Since   (D) holds,   Lemma \ref{Israel-Phelps-theorem-strenght} ensures the existence of 
a  $\sigma$-compact infinite dimensional vector space $\widetilde{W_n}$ linearly independent from $W_n$ and such that 
$f+g$ has a unique equilibrium state for all $(f,g)\in W_n\times(\widetilde{W_n}\setminus\{0\})$.
(alternatively, the space $\widetilde{W_n}$ can be obtained applying Theorem \ref{comparison-theorem} with $W_n$ in place of $W$, taking into account property $b)4)$). Let  $(f_{n,k})$ be  a Schauder basis  of $C(\Omega)$ included in $W_n$ (\cf \S \ref{subsection-Thermodynamic formalism})  and let $\{h_{n,k}:k\in\N\}$ be  a linearly independent subset  of $\widetilde{W_n}$  fulfilling 
\[\sum_{k=1}^{+\infty}\left(\sup_{f\in C(\Omega), \mid\mid f\mid\mid\le 1}\mid \lambda_{n,k}(f)\mid\right)\mid\mid h_{n,k}\mid\mid<1,\]
where $(\lambda_{n,k}(f))$ denotes the coordinates of $f$ in the basis $(f_{n,k})$.
  Put 
  \[V_{n+1}=\textnormal{span}(\{f_{n,k} + h_{n,k}:k\in\N\}).
\]
  The sequence $(f_{n,k}+h_{n,k})$ is a Schauder basis of $C(\Omega)$ (\cf \S \ref{subsection-Thermodynamic formalism})  hence 
$V_{n+1}$ is dense in $C(\Omega)$; clearly, $V_{n+1}$ is $\sigma$-compact; since $\widetilde{W_n}\cap W_n=\{0\}$ and  $\widetilde{W_n}$ is infinite dimensional, it follows that    $V_{n+1}$ is infinite dimensional and  fulfils $V_{n+1}\cap W_n=\{0\}$; furthermore,  Lemma \ref{condition on the h_n}  applied with
$W_n$ and  $\widetilde{W_n}$ in place of $W$ and $\widetilde{W}$
yields
 \[
 V_{n+1}\setminus\{0\}\subset W_n\oplus(\widetilde{W_n}\setminus\{0\})
 \]
 hence 
  \[W+V_{n+1}\setminus\{0\}\subset W_n\oplus(\widetilde{W_n}\setminus\{0\}).\]
    The recurrence hypothesis at rank $n$  implies that $f+g$ has a unique equilibrium state for all $(f,g)\in W\times \bigoplus_{j=0}^n V_j\setminus\{0\}$. 
  Since
\[\bigoplus_{j=0}^{n+1} V_j\setminus\{0\}=\bigoplus_{j=0}^n V_j\setminus\{0\} \bigcup V_{n+1}\setminus\{0\},\] 
it follows  that  the space $\bigoplus_{j=0}^{n+1} V_j$ fulfils the condition $(iv)$ of Theorem  \ref{comparison-theorem}, which gives 
  $\sP(n+1)$ by property $a)1)$   of Theorem  \ref{comparison-theorem}.  Therefore, $\sP(n)$ holds for all  $n\in\N$ hence the infinite direct sum $\bigoplus_{j=0}^\infty V_j$ fulfils 
 the condition $(iv)$ of  Theorem  \ref{comparison-theorem}
  since each element of $\bigoplus_{j=0}^\infty V_j$ (resp. $\bigoplus_{j=0}^\infty V_j\setminus\{0\}$) belongs to  $\bigoplus_{j=0}^n V_j$ (resp. $\bigoplus_{j=0}^n V_j\setminus\{0\}$)) for some $n\in\N$;  
  consequently,   property $a)1)$  of Theorem  \ref{comparison-theorem} implies that  the conclusion of part  $a)$ holds when $\cN=\N\cup\{0\}$.
  
  Let  $\cN\neq\N\cup\{0\}$.   Note that  $W+\bigoplus_{n\in\N\cup\{0\}\setminus\cN} V_n$ is a $\sigma$-compact vector space  dense in $C(\Omega)$ and thus  it fulfils the hypotheses of Theorem  \ref{comparison-theorem}. Since 
   \[(W+\bigoplus_{n\in\N\cup\{0\}\setminus\cN} V_n)+ (\bigoplus_{n\in\cN} V_n\setminus\{0\})\subset W+\bigoplus_{n\in\N\cup\{0\}} V_n\setminus\{0\},\]
   the preceding case (when $\cN=\N\cup\{0\}$) implies that $\bigoplus_{n\in\cN} V_n$ fulfils 
   the condition  $(iv)$  of Theorem  \ref{comparison-theorem} applied with $W+\bigoplus_{n\in\N\cup\{0\}\setminus\cN} V_n$ in place of $W$;    the conclusion of part  $a)$ when $\cN\neq\N\cup\{0\}$  follows by property $a)1)$  of Theorem  \ref{comparison-theorem} (applied with $W+\bigoplus_{n\in\N\cup\{0\}\setminus\cN} V_n$ in place of $W$). We have proved part $a)$ and the first assertion of part $b)$.
   
    If   furthermore   $V_0\cap W=\{0\}$, then the above proof works verbatim with $W$ in place of $W_0$ and 
   taking $\widetilde{W_0}=V_0$ (indeed, the choice of $W+V_0$  and not just  $W$   as general   definition  of $W_0$    is  made in order to ensure that $W_0\cap\widetilde{W_0}=\{0\}$); if   $W$ contains an element  admitting several equilibrium states,  then  $V_0\cap W=\{0\}$ by the condition  $(iv)$ of Theorem  \ref{comparison-theorem}; this proves the last assertion of part $b)$. 
     
     For each $n\in\N\cup\{0\}$ let $\cV_{n+1}$ and $W_n$ be the  spaces defined  in  part $c)$.  Put $\cV_0=V_0$ and for each   $n\in\N\cup\{0\}$ let $\sQ(n)$ denote the following inclusion: 
   \[W+\sum_{j=0}^n \widetilde{V}_j\subset W+ \bigoplus_{j=0}^n \cV_j.\]
  By definition $\sQ(0)$ holds. Let $n\in\N\cup\{0\}$ and  assume that $\sQ(n)$ holds.
  Since by definition  \[\widetilde{V}_{n+1}\subset W+ \bigoplus_{j=0}^n \widetilde{V}_j+\cV_{n+1},\]
         we have 
    \[W+\sum_{j=0}^{n+1}\widetilde{V}_j\subset W+ \bigoplus_{j=0}^n \widetilde{V}_j+\cV_{n+1};\]
   the recurrence hypothesis at rank $n$ together with the above inclusion  yields
   \[W+\sum_{j=0}^{n+1}\widetilde{V}_j\subset  W+ \bigoplus_{j=0}^{n+1} \cV_j,\]
      which proves $\sQ(n+1)$. Therefore, $\sQ(n)$ holds for all $n\in\N\cup\{0\}$. 
      Let $n\in\N\cup\{0\}$. Since $\cV_{k}\subset\bigoplus_{j=m_{k}}^{j=m_{k+1}} V_j$ for all $k\in\N$ with  $\bigoplus_{j=1}^\infty V_j$   as in part $a)$, it follows that  $f+g$ has a unique equilibrium state for all $(f,g)\in W\times (\bigoplus_{j=0}^{n+1} \cV_j\setminus\{0\})$ and in particular for all  $(f,g)\in W\times (\bigoplus_{j=0}^n\cV_j\oplus\cV_{n+1}\setminus\{0\})$; this property  together with $\sQ(n)$ shows that all the hypotheses of part $b)$ hold replacing $V_n$ by $\widetilde{V}_n$ and taking $\widetilde{W_n}=\cV_{n+1}$ for all $n\in\N\cup\{0\}$, which gives all the conclusions of part $c)$.
     \end{proof-theo-2}

\begin{proof-corollary}\label{proof-corollary}
Let $(v_f)$ denote the statement obtained from $(v)$ with the change described in  the  property $2)$  with $f\in V$. 
Let $(ii_\sigma)$ (resp. $(iii_\sigma)$, $(iv_\sigma)$, $(v_\sigma)$, $(v_{\sigma,f})$) denote the statement obtained from  $(ii)$ (resp. $(iii)$, $(iv)$, $(v)$, $(v_f)$)  assuming furthermore that $V$ is $\sigma$-compact. 

The condition 
        $(iii_\sigma)$ is equivalent to  the condition $(viii)$ of Theorem \ref{comparison-theorem}  with  $\Sigma=\{0\}$,   $V=W=E$; furthermore, the equalities  $V=W=E$ allows us to replace $V\setminus\{0\}$ by $V$. 
        Therefore,    the last assertion of part c) of  Theorem \ref{comparison-theorem}  yields
        \begin{equation}\label{proof-corollary-eq20}
 (i)\Leftrightarrow(ii_\sigma)\Leftrightarrow(iii_\sigma)\Leftrightarrow(iv_\sigma)\Leftrightarrow(v_{\sigma,f})\Leftrightarrow(v_\sigma)
 \end{equation}
 and the validity of the property $2)$;  furthermore, by the property $a)1)$  of  Theorem \ref{comparison-theorem},   
  if a vector space  fulfils one of the   conditions $(ii_\sigma)$, $(iii_\sigma)$, $(iv_\sigma)$, $(v_{\sigma,f})$, $(v_\sigma)$ then it  fulfils all of them.
 Since   $(ii_\sigma)$ (resp. $(iii_\sigma)$, $(iv_\sigma)$, $(v_{\sigma,f})$) is clearly equivalent to   $(ii)$ (resp. $(iii)$, $(iv)$, $(v_f$)) we have proved all the assertions of Corollary \ref{criterium-dense-vector-space-unique-equilibrium} except the property  $1)$ (note that the property $2)$ does not depend on the $\sigma$-compactness of $V$).
 
 If an infinite dimensional   vector  space $V$  dense in $C(\Omega)$ fulfils one of the  conditions $(ii)$, $(iii)$, $(iv)$, $(v)$,  then  for each $\sigma$-compact vector space $V_0$  dense in $V$ the hypotheses of  Theorem \ref{use-Schauder-basis-to-get-V} hold with $E=\Sigma=W=V_0$ (\cf Remark \ref{remark-Corollary-implies-Theorem-V=W}); therefore, property   $1)$  follows from part $a)$ of Theorem \ref{use-Schauder-basis-to-get-V}.
\end{proof-corollary}

\appendix

\section{}

 The paper  \cite{Israel-Phelps-84-MAGHSCAN-54} deals with 
 a nonempty convex compact subset $K$ of some Hausdorff locally convex  real topological vector  space,  the set $A(K)$ of all real-valued  affine continuous  functions on $K$ endowed with the uniform topology,  a  convex bounded non-positive-valued lower semi-continuous function $l$ on $K$ and the function $P_l$ on $A(K)$ associated to $l$  in the following way:
 \[\forall a\in A(K),\ \ \ \ \ \ \ P_l(a)=\sup_{x\in K}\{a(x)-l(x)\};\]
 the map $P_l$   is real-valued  convex   and continuous  (\cite{Israel-Phelps-84-MAGHSCAN-54}, Proposition 2.3). 
 When  $K=\mathcal{M}^{\tau}(\Omega)$  it is known that the map  
$C(\Omega)\ni g\mapsto \widehat{g}\in A(\mathcal{M}^{\tau}(\Omega))$
 is surjective
  (\cite{Israel-Phelps-84-MAGHSCAN-54}, Proposition 2.1); when furthermore $l=-h^\tau$ we   have 
 \begin{equation}\label{remark-proof-Phelps-Israel-result-eq60}
\forall (f,g,t)\in C(\Omega)^2\times\R,\ \ \ \ \ \ \ \ P_{-h^\tau}(\widehat{f+tg})=P_{-h^\tau}(\widehat{f}+t\widehat{g}))=P^\tau(f+tg);
 \end{equation} 
 in particular, $dP^\tau(f;g)$ coincides with the  directional derivative  of  $P_{-h^\tau}$ at $\widehat{f}$ in the direction $\widehat{g}$.
 Since  $h^\tau$ is affine and ${\sE}^\tau(\Omega)$ is the set of  extreme points of $K$,  Theorem 3.2 and 
 Theorem 3.3   of \cite{Israel-Phelps-84-MAGHSCAN-54}  take respectively  the following  forms:  
 
 \begin{theorem}\textnormal{\textbf{(Israel-Phelps)}}\label{Israel-Phelps-theorem}
 Assume that property (i) of Theorem  \ref{comparison-theorem}  holds. Then, for each $\sigma$-compact subset $S$ of $A(\mathcal{M}^{\tau}(\Omega))$ and  for each $n\in\N$,  the set $G_n$ of all elements $(a_1,...,a_n)\in A(\mathcal{M}^{\tau}(\Omega))^n$
 such that $P_{-h^\tau}$ is Gateaux differentiable at  $b+\sum_{k=1}^n t_k a_k$  for all  $(b,(t_1,..., t_n))\in S\times\R^n\setminus\{0\}$, is a  $G_\delta$  set dense in $A(\mathcal{M}^{\tau}(\Omega))^n$.
\end{theorem}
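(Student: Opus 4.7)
The plan is to prove the two conclusions separately: the $G_\delta$ structure of $G_n$ via upper semi-continuity of one-sided directional derivatives combined with $\sigma$-compactness of $S$, and density via a Baire/perturbation argument whose essential ingredient is property $(i)$. Throughout, $A(\mathcal{M}^{\tau}(\Omega))$ is a separable Banach space (since $\mathcal{M}^{\tau}(\Omega)$ is compact metrizable) and $P_{-h^\tau}$ is real-valued convex continuous on it, so for each fixed direction $h$ the right directional derivative $dP_{-h^\tau}(\cdot;h)$ is upper semi-continuous, and Gateaux differentiability at a point $x$ is equivalent to $dP_{-h^\tau}(x;h_j) + dP_{-h^\tau}(x;-h_j) = 0$ for $(h_j)$ a countable dense family of $A(\mathcal{M}^{\tau}(\Omega))$ (using separability and weak$^*$-compactness of the subdifferential to pass from a dense family of directions to all directions).

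For the $G_\delta$ part, I would decompose $S \times (\R^n \setminus\{0\}) = \bigcup_\ell C_\ell$ with $C_\ell$ compact (possible by $\sigma$-compactness of $S$) and write
\[
G_n = \bigcap_{j,k,\ell \in \N} \Big\{(a_1,\dots,a_n) : \sup_{(b,t) \in C_\ell} \Big[dP_{-h^\tau}\Big(b+\sum_i t_i a_i;h_j\Big) + dP_{-h^\tau}\Big(b+\sum_i t_i a_i;-h_j\Big)\Big] < 1/k\Big\}.
\]
For each triple $(j,k,\ell)$, the bracketed quantity is jointly upper semi-continuous in $(a,b,t)$, so the supremum over the compact set $C_\ell$ is upper semi-continuous in $a$, making each defining set open; hence $G_n$ is $G_\delta$.

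The main obstacle is density, and I would proceed by induction on $n$, the key case being $n=1$ with $S=\{0\}$. For this base case, by Baire's theorem applied to $G_1^c = \bigcup_k F_k$ where $F_k := \{a : \exists\, t \ne 0,\ \mathrm{diam}\,\partial P_{-h^\tau}(ta) \geq 1/k\}$ is closed, it suffices to show each $F_k$ has empty interior. Suppose not: then some small ball about an $a_0 \in A(\mathcal{M}^{\tau}(\Omega))$ lies in $F_k$, so for each nearby $a$ there exist $t \ne 0$ and distinct equilibrium measures $\mu_1, \mu_2 \in \mathcal{M}^{\tau}(\Omega)$ for $ta$ separated by at least $1/k$. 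By property $(i)$, each $\mu_i$ is a limit of ergodic measures $\mu_i^{(m)} \in \sE^{\tau}(\Omega)$ with $h^\tau(\mu_i^{(m)}) \to h^\tau(\mu_i)$; choosing an affine direction $a'$ (via Hahn--Banach) that separates a pair of these ergodic approximants, the perturbed point $t(a + \varepsilon a')$ would force one of the $\mu_i^{(m)}$ to strictly dominate in the variational expression, contradicting the assumption that $a + \varepsilon a'$ still lies in $F_k$. The delicate technical step is making this perturbation uniform in the scalar $t$ ranging over a compact set and quantifying the drop in the subdifferential diameter; this is exactly the content carried out in Israel--Phelps, which I would follow via the identifications $K = \mathcal{M}^{\tau}(\Omega)$ and $l = -h^\tau$. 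The inductive step to $n > 1$ then applies the $n=1$ result in each fiber with the enlarged $\sigma$-compact shift set $S + \mathrm{span}(a_1,\dots,a_{n-1})$, extending density by a Kuratowski--Ulam fibration argument.
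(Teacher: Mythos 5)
The paper's ``proof'' of this statement is a direct citation: it is Israel--Phelps's Theorem 3.2 (1984) applied with $K = \mathcal{M}^\tau(\Omega)$ and $l = -h^\tau$, the hypothesis on density of extreme points with bounded $l$-value becoming property $(i)$; the paper does not give a new argument, merely the specialization. Your proposal is therefore more ambitious than what the paper actually does: you sketch a proof, though you yourself defer the hard density-perturbation step to Israel--Phelps, so in the end the two amount to the same deference with different amounts of surrounding exposition. Your $G_\delta$ mechanism is sound: for the real-valued convex continuous map $P_{-h^\tau}$ on the separable Banach space $A(\mathcal{M}^\tau(\Omega))$, reducing Gateaux differentiability to countably many directions, exhausting $S \times (\R^n\setminus\{0\})$ by compact pieces $C_\ell$, and invoking upper semi-continuity of $dP_{-h^\tau}(\,\cdot\,;h)$ is exactly the right route. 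Your density sketch and fiberwise induction over $n$ with enlarged shift set $S + \mathrm{span}(a_1,\dots,a_{n-1})$ also capture the correct structure.

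There is one genuine technical slip worth fixing: the sets $F_k := \{a : \exists\, t \ne 0,\ \mathrm{diam}\,\partial P_{-h^\tau}(ta) \ge 1/k\}$ are not obviously closed, because the witnessing parameter $t$ ranges over the non-compact set $\R\setminus\{0\}$; along a sequence $a_m \to a$ with witnesses $t_m \to 0$ one only learns that $\partial P_{-h^\tau}(0)$ is large, which does not place $a$ in $F_k$, and similarly $t_m \to \infty$ gives no conclusion. The repair is to carry the compact exhaustion from the $G_\delta$ step into the Baire step: define $F_{k,\ell} := \{(a_1,\dots,a_n) : \exists\, (b,t) \in C_\ell,\ \mathrm{diam}\,\partial P_{-h^\tau}(b + \sum_i t_i a_i) \ge 1/k\}$. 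These \emph{are} closed, since $(a,b,t) \mapsto b+\sum_i t_i a_i$ is continuous, $C_\ell$ is compact, and $x \mapsto \mathrm{diam}\,\partial P_{-h^\tau}(x)$ is upper semi-continuous (the subdifferential is an upper hemicontinuous map into the compact metric space $\mathcal{M}^\tau(\Omega)$, so its diameter is u.s.c.). Baire's theorem applied to the $F_{k,\ell}$ then aligns with the $G_\delta$ decomposition you wrote down, and ``each $F_{k,\ell}$ has empty interior'' is precisely the statement that Israel--Phelps's perturbation argument — the part you correctly identify as requiring uniformity over the compact $t$-range and ultimately defer to their paper — establishes from property $(i)$.
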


 \begin{theorem}\textnormal{\textbf{(Israel-Phelps)}}\label{Israel-Phelps-theorem-converse-affine}
If  property (i) of Theorem  \ref{comparison-theorem} does not hold, then there exists  $n\in\N$ such that  the set of 
 all  $(a_1,...,a_n)\in {A(\mathcal{M}^{\tau}(\Omega))}^n$ 
 for which $P_{-h^\tau}$  is not Gateaux differentiable  at  $\sum_{k=1}^n t_k a_k$ for some  $(t_1,...,t_n)\in\R^n\setminus\{0\}$, has nonempty interior. 
\end{theorem}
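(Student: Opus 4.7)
The plan is to recognize this statement as a direct translation of Theorem~3.3 of~\cite{Israel-Phelps-84-MAGHSCAN-54} into our dynamical setting; almost all the work consists in checking that the Israel--Phelps framework recalled at the start of Appendix~A specializes correctly, and in matching the density hypothesis of that paper with the negation of condition $(i)$ of Theorem~\ref{comparison-theorem}.

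First, I would instantiate the Israel--Phelps setup by taking $K=\mathcal{M}^{\tau}(\Omega)$, which is a nonempty convex compact subset of the locally convex Hausdorff real topological vector space $\widetilde{\mathcal{M}}(\Omega)$, and by taking $l=-h^{\tau}$. Since $h^{\tau}$ is finite, non-negative, affine and upper semi-continuous on the compact set $\mathcal{M}^{\tau}(\Omega)$, the function $l$ is bounded, non-positive-valued, convex and lower semi-continuous, so the setup of Appendix~A applies. Using the surjection $C(\Omega)\ni g\mapsto\widehat{g}\in A(\mathcal{M}^{\tau}(\Omega))$ together with the identities in~(\ref{remark-proof-Phelps-Israel-result-eq60}), the function $P_{-h^{\tau}}$ corresponds to $P^{\tau}$, and Gateaux differentiability of $P_{-h^{\tau}}$ at any affine combination $\sum_{k}t_{k}a_{k}$ matches Gateaux differentiability of $P^{\tau}$ at the corresponding combination of continuous functions.

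Second, I would match the hypothesis. The extreme points of $K$ are exactly $\sE^{\tau}(\Omega)$, and the hypothesis used in Theorem~3.3 of~\cite{Israel-Phelps-84-MAGHSCAN-54} (the converse to their Theorem~3.2) is the failure of the density of the graph of $l$ restricted to the extreme points in the graph of $l$. By the upper semi-continuity of $h^{\tau}$ and Lemma~\ref{lemma-conv-net}, this extreme-point graph density condition is equivalent to $(i)$ of Theorem~\ref{comparison-theorem} (this is exactly the content of the equivalence $(i)\Leftrightarrow(ii)$ established in the proof of that theorem, which is purely a technical iterated-limit argument and does not depend on Appendix~A). Hence the hypothesis ``$(i)$ does not hold'' translates into the precise hypothesis of Israel--Phelps~3.3.

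Finally, I would simply read off the conclusion of Israel--Phelps~3.3 in the present notation: there exists $n\in\N$ and a nonempty open subset of $A(\mathcal{M}^{\tau}(\Omega))^{n}$ whose every element $(a_{1},\ldots,a_{n})$ admits some $(t_{1},\ldots,t_{n})\in\R^{n}\setminus\{0\}$ at which $P_{-h^{\tau}}$ is not Gateaux differentiable at $\sum_{k}t_{k}a_{k}$. The only delicate point is the translation carried out in the second step; once that equivalence is available, no additional argument is needed and the statement follows verbatim from the cited result.
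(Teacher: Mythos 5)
Your proposal is correct and takes essentially the same approach as the paper: Theorem~\ref{Israel-Phelps-theorem-converse-affine} is presented in Appendix~A precisely as the specialization of Theorem~3.3 of \cite{Israel-Phelps-84-MAGHSCAN-54} to $K=\mathcal{M}^{\tau}(\Omega)$ and $l=-h^{\tau}$, using exactly the identifications you invoke (the surjection $C(\Omega)\ni g\mapsto\widehat{g}$, the equations~(\ref{remark-proof-Phelps-Israel-result-eq60}), and $\sE^{\tau}(\Omega)$ being the set of extreme points of $K$). The only cosmetic difference is that the paper phrases the Israel--Phelps density hypothesis directly in the level-set form of condition~$(i)$, so your detour through the graph-density formulation and the equivalence $(i)\Leftrightarrow(ii)$ is harmless but not required.
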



\begin{thebibliography}{100}


\bibitem{Beardon-91}
A. F. Beardon, \textit{Iteration of rational functions},  Complex analytic dynamical systems, Graduate Texts in Mathematics, Vol. 132, Springer-Verlag 1991.
 
 

\bibitem{Comman_Rivera-Letelier(2010)ETDS31}
 H. Comman, J. Rivera-Letelier. Large deviation principles for non-uniformly hyperbolic rational maps.
  \textit{Ergodic Theory and Dynamical Systems 31, No. 2 (2011)  321-349.}


\bibitem{Comman(2009)NON22}
H. Comman. Strengthened large deviations for rational maps and full shifts, with
  unified proof. \textit{Nonlinearity 22 (6) (2009) 1413-1429}.
  
  
\bibitem{Comman(2007)STAPRO77}
H. Comman. Variational form of the large deviation functional.
\textit{Statistics and Probability Letters 77 (2007), No. 9,
931-936.}





\bibitem{com-TAMS-03}
H. Comman. Criteria for large deviations. \textit{Trans. Amer.
Math. Soc. 355 (2003), no. 7,  2905-2923.}


  
\bibitem{Dembo-Zeitouni}
A. Dembo,  O. Zeitouni. \textit{Large deviations techniques and
applications}, Second edition, Springer 1998.




\bibitem{Ekeland_Teman}
I. Ekeland,  R. Teman. \textit{Convex analysis and variational problems}, North-Holland,  1976.



\bibitem{Gartner-TeorVerojatnostPrimenen-77}
Jurgen G{\"a}rtner. On large deviations from an invariant measure. 
\textit{Teor. Verojatnost.  Primenen, 22 (1) (1977) 27-42}.



\bibitem{Israel-Phelps-84-MAGHSCAN-54}
R. B. Israel,  R. R. Phelps.  Some convexity questions arising in statistical mechanics. \textit{Math. Scand. 54 (1984) 133-156}.


\bibitem{Israel-86-CMP-106}
R. B. Israel. Generic triviality of phase diagrams in spaces of long-range interactions. 
\textit{Commun. Math. Phys. 106 (1986) 459-466}





\bibitem{Kelley-91}
J. L. Kelley. \textit{General topology},  Springer 1991.


\bibitem{Kifer-TAMS-90}
Y. Kifer. Large deviations in dynamical systems and stochastic processes.
\textit{Trans. Amer. Math. Soc. 321 (1990) 505-524}.




\bibitem{Ljubich-83-ETDS-3}
M. J.  Ljubich. Entropy properties of rational endomorphisms of the Riemann sphere.
\textit{Ergodic Theory Dynam. Systems 3 (1983) 351-385}.


\bibitem{Phelps_Dynamics and Randomness(2002)Santiago}
R. R. Phelps. Unique equilibrium states.  \textit{Dynamics and Randomness, 219-225 (2002), A. Mass et al. (Eds.)}


\bibitem{Przytycki_Rivera-Letelier_Smirnov(2003)InventMath151}
F. Przytycki, R. Rivera-Letelier, S. Smirnov. Equivalence and topological invariance of conditions for non-uniform hyperbolicity in the iteration of rational maps. \textit{Invent. Math. 151 (1) (2003) 29-63}.


\bibitem{Rockafellar-70}
R. G. Rockafeller, \textit{Convex analysis}, Princeton University
Press 1970.



\bibitem{Ruelle-78}
D. Ruelle. \textit{Thermodynamic formalism},  Addison-Wesley 1978.



\bibitem{Semeradi-82}
Z. Semeradi. \textit{Schauder basis in Banach spaces of continuous functions}, Springer-Verlag 1982.


\bibitem{Sokal-(82)-CMP-86}
A. D. Sokal. More surprises in the general theory of lattice systems. \textit{Commun. Math. Phys. 86 (1982) 327-336}.

\end{thebibliography}
\end{document}